\documentclass[12pt, reqno, a4paper]{article}
\usepackage{amsmath,amssymb,epsfig,amsfonts,color,bm}
\usepackage{tikz}
\usepackage{comment}

\usepackage{mathbbol}

\usepackage[left=1.5cm,right=1.5cm,top=2cm,bottom=2cm]{geometry}
\usepackage[colorlinks=true,breaklinks=true,linkcolor=lightblue,citecolor=lightblue,urlcolor=lightblue]{hyperref}

\usepackage{subfigure}

\newcommand{\aer}[1]{\textcolor{black}{#1}}

\definecolor{lightblue}{rgb}{0.22,0.45,0.70}
\definecolor{dkgreen}{rgb}{0,0.6,0}
\definecolor{gray}{rgb}{0.5,0.5,0.5}
\definecolor{mauve}{rgb}{0.58,0,0.82}

\newtheorem{remark}{Remark}[section]
\newtheorem{theorem}{Theorem}[section]
\newtheorem{lemma}[theorem]{Lemma}

\newenvironment{proof}{\noindent{\it Proof.}}{\hfill$\square$}

\numberwithin{equation}{section}
\numberwithin{remark}{section}
\numberwithin{figure}{section}
\numberwithin{table}{section}

\newcommand{\bP}{{\mathbf{P}}}
\newcommand{\bV}{{\mathbf{V}}}

\newcommand{\cP}{{\boldsymbol{\mathcal{P}}}}

\newcommand{\Rd}{{\mathbb{R}^d}}

\newcommand{\bx}{{\boldsymbol{x}}}
\newcommand{\bu}{{\boldsymbol{u}}}
\newcommand{\bv}{{\boldsymbol{v}}}
\newcommand{\bw}{{\boldsymbol{w}}}

\newcommand{\bzero}{{\boldsymbol{0}}}
\newcommand{\bn}{{\boldsymbol{n}}}
\renewcommand{\bf}{{\boldsymbol{f}}}
\newcommand{\bsigma}{{\boldsymbol{\sigma}}}
\newcommand{\btheta}{{\boldsymbol{\theta}}}
\newcommand{\bzeta}{{\boldsymbol{\zeta}}}
\newcommand{\bbeta}{{\boldsymbol{\beta}}}
\newcommand{\btau}{{\boldsymbol{\tau}}}
\newcommand{\bxi}{{\boldsymbol{\xi}}}
\newcommand{\bpsi}{{\boldsymbol{\psi}}}
\newcommand{\bphi}{{\boldsymbol{\varphi}}}

\newcommand{\bepsilon}{{\boldsymbol{\epsilon}}}
\newcommand{\bnabla}{{\boldsymbol{\nabla}}}
\newcommand{\dx}{\,{\mathrm{d}\bx}}
\newcommand{\ds}{\,{\mathrm{d}s}}
\newcommand{\dt}{\,\mathrm{d}t}

\newcommand{\rrb}[1]{{\leavevmode\color{black}{#1}}}
\newcommand{\cred}[1]{{\leavevmode\color{red}{#1}}}

\allowdisplaybreaks

\begin{document}

\title{A virtual element method for Kelvin--Voigt viscoelasticity\thanks{This work has been supported by the Anusandhan National Research Foundation (ANRF), Department of Science and Technology (DST), Government of India, through the Advanced Research Grant (ARG)–MATRICS Research Grant [Grant No. ANRF/ARGM/2025/002359/MTR]; and by the Australian Research Council through the Future Fellowship grant FT220100496 and Discovery Project grant DP22010316.}}

\author{\textsc{Utkarsh Rajput}\thanks{Department of Mathematics, Indian Institute of Space Science and Technology, Thiruvananthapuram 695 547, Kerala, India. Email: utkarshrajput.19@res.iist.ac.in.},\quad \textsc{Ankit Kumar}\thanks{Department of Mathematics, Birla Institute of Technology and Science, Pilani, Pilani Campus, Vidhya Vihar, Pilani, Rajasthan, 333031, India. Email: p20210041@pilani.bits-pilani.ac.in.},\quad 
\textsc{Andres E. Rubiano}\thanks{School of Mathematics, Monash University, 9 Rainforest Walk, Melbourne, VIC 3800, Australia. Email:  andres.rubianomartinez@monash.edu.}, \\
\textsc{Ricardo Ruiz-Baier}\thanks{School of Mathematics, Monash University, 9 Rainforest Walk, Melbourne, VIC 3800, Australia; and Universidad Adventista de Chile, Casilla 7-D, Chill\'an, Chile. Email:  ricardo.ruizbaier@monash.edu}, \quad \textsc{Sarvesh Kumar}\thanks{Department of Mathematics, Indian Institute of Space Science and Technology, Thiruvananthapuram 695 547, Kerala, India. Email:  sarvesh@iist.ac.in.}.}

\maketitle

\begin{abstract}
\noindent Considering the computational advantages of virtual element methods (VEM), this work employs a conforming VEM for the numerical approximation of the Kelvin--Voigt model of viscoelasticity. For clarity and simplicity, we focus on the primal formulation. The spatial discretization is carried out using the virtual element method, while the temporal discretization is handled via the {second-order Crank--Nicolson} scheme. We establish the well-posedness of both the semi-discrete and fully discrete problems and derive {\it a priori} error estimates. Several representative numerical examples are presented to validate the theoretical results and to demonstrate the effectiveness of the proposed formulation.

\medskip
\noindent\textbf{MSC (2010):} 65M60, 65M12.  

\medskip 
\noindent\textbf{Keywords:} Viscoelasticity problem, Kelvin--Voigt model, Virtual element methods, Semi and fully discrete  error estimates.
\end{abstract}


\section{Introduction}
Viscoelastic materials, which exhibit characteristics of both elastic solids and viscous fluids, arise in a wide spectrum of scientific and engineering applications. Examples include polymers, biological tissues, soft composites, and geophysical materials, where stress relaxation, creep, and hysteresis play a central role in the mechanical response\cite{chaudhuri2015hydrogels}. Understanding and predicting such behavior requires constitutive models that capture both instantaneous elastic effects and time-dependent viscous dissipation.

Among the earliest and most widely used models is the Kelvin--Voigt model, introduced independently by Kelvin and Voigt in the late nineteenth century \cite{LEMAITRE200171}. We recall that in this model, the material response is represented by a spring (having elastic modulus $E$) and a dashpot (with viscosity $\eta$) arranged in parallel, leading to the constitutive law 
\[
	\bsigma(t) = E \bepsilon(t) + \eta \frac{\mathrm{d}}{\dt}\bepsilon(t),
\]
where
\( \bsigma(t) \)  and 
                    \( \bepsilon(t) \) are the stress and strain at time \( t \),
and										    
\( \frac{\mathrm{d}\bepsilon(t)}{\dt} \) is the rate of strain.
This constitutive equation captures the response of the material to applied loads, incorporating both the immediate elastic deformation (proportional to \( E \bepsilon(t) \)) and the time-dependent viscous deformation (proportional to \( \eta \frac{\mathrm{d}\bepsilon(t)}{\dt} \)).  Essential viscoelastic phenomena include  creep (progressive deformation under constant stress) and stress relaxation (decay of stress under constant strain), commonly observed in polymers, biological tissues, and other viscoelastic materials. Despite its simplicity, this model provides a building block for more complex rheological models and continues to serve as a benchmark in the analysis of time-dependent material behavior.

From the numerical analysis perspective, the study of Kelvin--Voigt-type problems leads to systems of partial differential equations coupling elastic and dissipative terms. Their discretization requires careful consideration of both spatial and temporal approximations. On the one hand, standard and mixed finite element methods (FEM), discontinuous Galerkin, as well as multipoint stress approximation schemes \cite{lee2012mixed,rognes10,becache2005mixed,wang2024multipoint,jang2024discontinuous,gatica21,meddahi23} offer a solid foundation but typically face limitations on irregular meshes, particularly when dealing with heterogeneous coefficients or evolving interfaces. On the other hand, finite difference and spectral methods may lack geometric flexibility or robustness. These challenges have motivated the use of modern discretization techniques that combine accuracy, robustness, and geometric generality.

The Virtual Element Method (VEM), introduced in \cite{vem}, has emerged as a natural generalization of FEM to polygonal and polyhedral meshes. Its key feature is the construction of consistent and stable bilinear forms using only degrees of freedom and projection operators, without requiring explicit basis functions. 
This allows VEM to support highly general meshes, including those with arbitrarily small edges and faces, while retaining stability and optimal convergence properties. VEM has proven especially effective in problems where geometric flexibility is essential, such as interface problems, fracture mechanics, and heterogeneous materials \cite{brenner}.
Over the past decade, VEM has been successfully applied to a broad range of models, including elasticity \cite{da_Veiga_2013}, Stokes and Navier--Stokes equations \cite{stream,antonietti2023virtual,transient}, poroelasticity \cite{kumar2024numerical}, and parabolic integro-differential equations \cite{yadav2024conforming}, to name a few examples. Regarding problems in solid mechanics, very recent developments (published within the last year) have advanced stabilization-free techniques, robust formulations for nearly incompressible materials, brittle fracture simulations, and interior penalty-type enhancements \cite{xu2025stabilization,chen2025brittle,qiu2025interior,khot2025robust,silva2025node,visinoni2024family}. These contributions demonstrate both the versatility and maturity of the VEM framework in addressing complex PDEs.

Within viscoelasticity, several VEM formulations have been proposed in recent years. The work \cite{artioli2017arbitrary} introduced early VEM schemes for arbitrary-order viscoelastic analysis in linear regimes. More recently,  \cite{xiao} developed VEM discretizations tailored to viscoelastic flow problems, while  \cite{sarvesh25} investigated time-dependent mixed VEM formulations for viscoelastic solids. These studies highlight the potential of VEM in this area but also point to open challenges, such as the treatment of fully discrete schemes, the rigorous derivation of {\it a priori} error estimates, and the robust handling of heterogeneous viscoelastic parameters. {In this paper, we consider the general form of the Kelvin--Voigt viscoelastic model and employ VEM for the spatial discretization in both two and three dimensions.  We combine the VEM spatial discretization with the Crank--Nicolson time-stepping schemes, to also ensure stability and robustness in long-time simulations. To the best of our knowledge, the virtual element framework has not yet been applied to this class of problems. We stress that the application of VEM to the Kelvin–Voigt model presents significant analytical and computational challenges due to the coupling between elastic and viscous effects and the need to handle complex geometries with high accuracy. By introducing suitable intermediate projection operators and carefully designing the discrete bilinear forms, we establish optimal a priori error estimates for both the semi-discrete and fully discrete formulations. These results demonstrate the robustness and flexibility of the virtual element approach in addressing viscoelastic problems governed by the Kelvin--Voigt model.}

This paper is organized as follows. In the {remainder of this} section, we introduce the continuous problem along with its variational formulation. Section \ref{sec:vem} is devoted to the virtual element discretization of the continuous problem, where appropriate virtual spaces are proposed. The well-posedness of both the semi-discrete formulation and the optimal error analysis using suitable projection operators has been carried out in Section \ref{sec:semi-discr}. The fully discrete problem using the {Crank--Nicolson} scheme for time-discretization has been discussed in Section \ref{sec:fully-discr}. Finally, numerical experiments are presented in Section \ref{sec:results} to validate the theoretical rates of convergence using different types of meshes in 2D and 3D.

In this article, we adhere to the standard notation for functional spaces and norms. Let $\mathcal{B}$ be the open and bounded domain, then we denote the norm by $\|\bullet\|_{p,\mathcal{B}}$ and the semi-norm by $|\bullet|_{p,\mathcal{B}}$ in the Sobolev space $H^p(\mathcal{B})$, where $p$ is any integer. In case $p=0$, \textit{i.e.},  $H^0(\mathcal{B}) = L^2(\mathcal{B})$, the inner-product is denoted by $(\bullet,\bullet)_{\mathcal{B}}$ and the norm is denoted by $\|\bullet\|_{\mathcal{B}}$. The subscript $\mathcal{B}$ will be omitted from all the notations if $\mathcal{B} = \Omega$.
 
\subsection{Continuous problem}
In this section, we present the model problem along with its corresponding variational formulation. We also discuss the key properties of the continuous bilinear form that underpins the framework of the variational formulation. The model problem is defined over a two-dimensional spatial domain. A precise mathematical description of the model problem in a two-dimensional setting is provided below.

Let $\Omega \subset \Rd, \ d = 2,3$, be a bounded and simply connected polygonal or polyhedral domain. Denote by {$\mathbb{M} = \mathbb{R}^{d \times d}$ the space of real $d \times d$} matrices, and let the physical parameters $\rho, \mu_1, \mu_2, \lambda_1, \lambda_2 \in \mathbb{R}^+$ be given. In the context of infinitesimal deformations, the strain tensor $\bepsilon : H^1(\Omega; \Rd) \to L^2(\Omega; \mathbb{M})$ is defined by $\bepsilon(\bu)=(\bnabla \bu+(\bnabla \bu)^{\tt t})/2$, where $\bu$ represents the displacement field. Let $A_i:\mathbb{M}\to\mathbb{M},\;\;i=1,2$ be the rank-four tensor-valued maps defined as $A_i \btau=2\mu_i\btau+\lambda_i (\tau_{1,1}+\tau_{2,2})\mathbb{I}$, where $\mu_i, \lambda_i$ are different pairs of Lam\'e coefficients of Hooke's law. Consider the following problem of finding $\bu:\Omega\times[0,t]\to\Rd$ such that
\begin{subequations}\label{model_prblm}
    \begin{align}
        \rho \bu''-\bnabla\cdot(A_1\bepsilon(\bu')+A_2\bepsilon(\bu)) &=\bf \ \ \ \text{in} \ \  \Omega\times(0,T], \\
    \bu|_{\partial\Omega\times[0,T]} &= 0,  \\
    \bu(0)=\bu_0 \ \ \text{and} \ \ \bu'(0)&=\bu_1 \ \ \text{in} \ \ \Omega,
    \end{align}
\end{subequations}
where $\bu_0,\bu_1:\Omega\to\Rd$ are given sufficiently smooth initial data and $\bf:\Omega\times[0,T]\to\Rd$ is the given load vector. The given boundary conditions are of {homogeneous} Dirichlet type, implying a fixed boundary.

In order to state the variational problem, we denote the space $\bV = H_0^1(\Omega;\Rd)$ and for $i = 1,2$, define the symmetric bilinear forms $a_i:\bV\times \bV\to \mathbb{R}$ by $a_i(\bu,\bv)=(A_i\epsilon(\bu),\epsilon(\bv))$. Under the assumptions made on the tensor-valued maps $A_i, i = 1,2$, we obtain $\forall \ \bu,\bv \in \bV$ 
\begin{align}
    a_1(\bu,\bv) \leq C|\bu|_1|\bv|_1, \quad \quad  a_2(\bu,\bv) \leq C|\bu|_1|\bv|_1.
\end{align}
Also, for all $\bv \in \bV$, we have the following coercivity bounds,
\begin{align}
    a_1(\bv,\bv) \geq C|\bv|_1^2, \quad \quad  a_2(\bv,\bv) \geq C|\bv|_1^2,
\end{align}
{where $C>0$ depends on  Korn's inequality.}

Now, the variational formulation corresponding to the problem \eqref{model_prblm} seeks $\bu\in L^\infty(0,T;\bV)$ such that for $\bf\in L^2(0,T;L^2(\Omega;\Rd))$, $\bu_0\in \bV$, $\bu_1\in L^2(\Omega;\Rd)$, 
\begin{equation}\label{vf}
    (\bu'',\bv)+ a_1( \bu', \bv)+ a_2( \bu, \bv) = (\bf,\bv) \ \ \forall \ \bv \in \bV \ \ \text{a.e.} \ \ t \in (0,T], 
\end{equation}
subject to the initial condition that $\bu(0)=\bu_0$ and $\bu'(0)=\bu_1$. For the well-posedness of the system \eqref{vf} we refer to \cite{lee2012mixed}.

\section{{Preliminaries on} virtual element discretizations}\label{sec:vem}
In order to determine a VEM discretization of the problem \eqref{vf}, we consider a sequence $\mathcal{T}_h$ of decompositions of $\Omega$ into simply connected polygonal elements $K$ with boundary $\partial K$. The diameter of the element $K$ is denoted by $h_K$ and the mesh size $h$ will be the maximum of the diameter of all the elements in $\mathcal{T}_h$, \textit{i.e.} $h = \displaystyle \max_{K \in \mathcal{T}_h} h_K$. Every element $K$ satisfy the following:
\begin{itemize}
\item There exists a $\gamma_1$ such that for all $h$, each $K$ in $\mathcal{T}_h$ is star-shaped with respect to a ball of radius greater than $\gamma_1 h_K$.
\item There exists a $\gamma_2$ such that for all $h$, for each $K$ in $\mathcal{T}_h$, the distance between any two vertices is greater than $\gamma_2 h_K$.
\end{itemize}

{We start with the construction of scalar local spaces and suitable projectors. The vector case results simply by tensor product of the scalar counterpart.} 
For any $K\in \mathcal{T}_h$, we denote $\mathbb{P}_k(K)$ as the set of all polynomials of degree $\leq k$. For a given positive integer $k$ and for each element $K$, we define the local auxiliary space of order $k$ by \cite{ahmad2013equivalent}
\begin{equation*}
    \tilde{U}^K_h:=\{v_h: v_h|_{\partial K} \in \mathbb{B}_k(\partial K) \text{ and } \Delta v_h|_K \in \mathbb{P}_{k-2}(K)\},
\end{equation*}
where $\mathbb{B}_k(\partial K):=\{v\in C^0(\partial K):v|_e\in\mathbb{P}_k(e)$ $\forall \text{ edge } e \in \partial K\}$. Any function $v_h \in \tilde{U}^K_h$ can be uniquely determined by the following set of degrees of freedom:
\begin{itemize}
\item The values of $v_h$ at internal vertices.
\item The values of $v_h$ at $k-1$ uniformly spaced points on each internal edge $e$.
\item For $k>1$, the moments $\frac{1}{|K|}\int_K m(\bx) v_h(\bx) \dx$ $\forall \ m \in M_{k-2}(K)$ in each element $K$, 
where $\it{M}_{k-2}(K):=\{ \big(\frac{\bx-\bx_{K}}{h_{k}}\big)^{s}, |s|\leq k-2 \} $ and $\bx_{K}$ denotes the barycenter of $K$.
\end{itemize}
The degrees of freedom for a octagonal element for $k = 1,2$ and $3$ are {illustrated} in Figure \ref{fig:dofs}. The degrees of freedom on edges are represented by red dots while the internal degrees of freedom are represented by blue squares.

\begin{figure}[t!]
\begin{center}
\subfigure[$k=1$]{\includegraphics[width=0.32\textwidth]{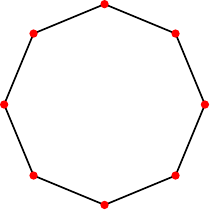}}
\subfigure[$k=2$]{\includegraphics[width=0.32\textwidth]{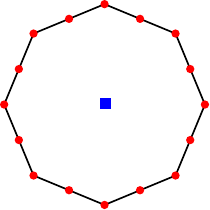}}
\subfigure[$k=3$]{\includegraphics[width=0.32\textwidth]{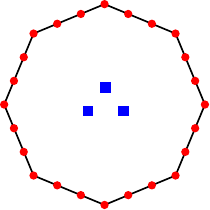}}
    \caption{Degrees of freedom for a octagonal element {using polynomial degrees} $k = 1,2$, and $3$.}
    \label{fig:dofs}
\end{center}\end{figure}

Next, and similarly to \cite{da_Veiga_2013} we define the {scalar} elliptic projection operator $\Pi^{\nabla,K}:\tilde{U}^K_h\to \mathbb{P}_k(K)$ such that it takes $v_h\in U^K_h$ to the solution of the following equations
\begin{align*} (\nabla \Pi^{\nabla,K} v_h,\nabla q)_{L^2(K)}&=(\nabla v_h,\nabla q)_{L^2(K)}\;\;\forall q \in \mathbb{P}_k(K), \\
 j_K(\Pi^{\nabla,K} v_h)&=j_K(v_h), \end{align*}
with $j_K(q)$ as the average of $q$ over the vertices of $K$. In order to compute the $L^2$-projection operator, we define the local virtual space with the help of the extended local virtual element space as following $U^K_h:=\{v_h: v_h|_{\partial K} \in \mathbb{B}_k(\partial K) \text{ and } \Delta v_h|_K \in \mathbb{P}_{k}(K)\}$ as:
$$W^K_h:=\{w\in U^K_h : (w-\Pi^{\nabla,K} w,q)=0\;\;\forall \ q\in \mathbb{P}_k(K)/\mathbb{P}_{k-2}(K)\}.$$

This modified virtual element space, introduced in \cite{ahmad2013equivalent} allows the calculation of the projection of the functions in the virtual element space to the space of piecewise $k$-degree polynomials. So, we now define the $L^2$-projection operator by
\begin{equation*}
    (\Pi^{0,K}v_h,q)_K = (v_h,q)_K, \ \forall \ q \in \mathbb{P}_k(K).
\end{equation*}

\subsection{Three-dimensional spaces}
Let us now consider $K$ to be a polyhedron element, $\partial K$ be the set of all faces of the polyhedron and $e$ be its edges. For any $f \subset \partial K$, we define the space
\begin{equation*}
    \mathbb{B}_k(\partial f):=\{v\in C^0(\partial f):v|_e\in\mathbb{P}_k(e) \ \forall \ e \in \partial f\}.
\end{equation*}
We also denote the space
\begin{equation*}
    W^f_h:=\{w\in U^f_h : (w-\Pi^{\nabla,f} w,q)=0\;\;\forall \ q\in \mathbb{P}_k(f)/\mathbb{P}_{k-2}(f)\},
\end{equation*}
where $U^f_h = \{v_h : v_h|_{\partial f} \in \mathbb{B}_k(\partial f) \text{ and } \Delta v_h|_f \in \mathbb{P}_k(f)\}$. The three-dimensional preliminary space is set as  
\begin{equation*}
    \tilde{\mathcal{U}}^K_h:=\{v_h \in H^1(K) \cap C^0(\partial K) : v_h|_{f} \in W^f_h \ \forall \ f \subset \partial K \text{ and } \Delta v_h|_K \in \mathbb{P}_{k-2}(K)\}.
\end{equation*}
We choose the following set of degrees of freedom in $\tilde{\mathcal{U}}^K_h$:
\begin{itemize}
    \item The values of $v_h$ at the vertices.
    \item The values of $v_h$ at $k-1$ uniformly spaced points on each edge $e$.
    \item For $k>1$, the moments $\frac{1}{|f|}\int_f m(\bx) v_h(\bx) \dx$ $\forall \ m \in M_{k-2}(f)$ on each face $f \subset \partial K$.
    \item For $k>1$, the moments $\frac{1}{|K|}\int_K m(\bx) v_h(\bx) \dx$ $\forall \ m \in M_{k-2}(K)$ in each element $K$.
\end{itemize}
Figure \ref{fig:dofs_3d} demonstrate the degrees of freedom for a three-dimensional cubic element. As in two-dimensional case, the solid red circles are corresponding to degrees of freedom on edges and blue squares represent the internal degrees of freedom. While green triangles are used to show the degrees of freedom on each face of the element.

Again, in $\tilde{\mathcal{U}}^K_h$ using the chosen set of degrees of freedom, we can define the elliptic projection operator but the $L^2$-projection operator can not be computed. Hence, we define the local virtual space as follows:
\begin{align*}
    W_h^K := &\{w \in H^1(K) \cap C^0(\partial K) : v_h|_{f} \in W^f_h \ \forall \ f \subset \partial K,  \\
    &\Delta v_h|_K \in \mathbb{P}_{k}(K) \text{ and } (w-\Pi^{\nabla,K} w,q)=0\;\;\forall \ q\in \mathbb{P}_k(K)/\mathbb{P}_{k-2}(K)\}.
\end{align*}

\begin{figure}[t!]
\begin{center}
\subfigure[$k=1$]{\includegraphics[width=0.32\textwidth]{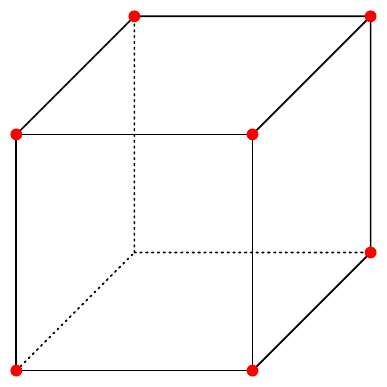}}
\subfigure[$k=2$]{\includegraphics[width=0.32\textwidth]{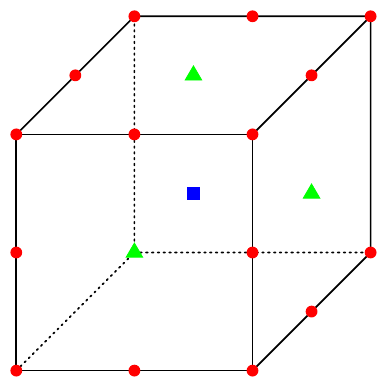}}
\subfigure[$k=3$]{\includegraphics[width=0.32\textwidth]{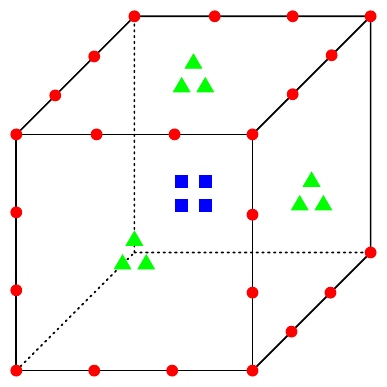}}
    \caption{Degrees of freedom for three-dimensional cubic element using polynomial degrees $k = 1,2$, and $3$.}
    \label{fig:dofs_3d}
\end{center}\end{figure}

The corresponding global discrete space is given by
$$W_h=\{v_h\in H_0^1(\Omega) : v_h|_K \in W^K_h\;\;\forall K \in \mathcal{T}_h\}.$$

Finally, we are ready to define the local virtual space for   vector-valued functions  as 
\begin{align*}
    \bV^K_h=\left\{ \begin{pmatrix} v_1 \\ \vdots \\ v_d\end{pmatrix} : v_1,\ldots,v_d\in W^K_h \right\},
\end{align*}
and similarly for the global virtual space $\bV_h$.  
Further, for $\bv_h=\left( v_1, \ldots, v_d\right)^{\tt t} \in \bV^K_h$, we use the same notation for  the elliptic projection operator $\Pi^{\bnabla,K}$ applied to the vector field $ \bv_h$, and it is defined as $\begin{pmatrix}  \Pi^{\nabla,K} v_1 \\ \vdots \\ \Pi^{\nabla,K} v_d \end{pmatrix}$. Similarly, we have $\Pi^{0,K} \bv_h =\begin{pmatrix}  \Pi^{0,K} v_1 \\ \vdots \\ \Pi^{0,K} v_d\end{pmatrix}$.
{Finally, the broken $H^1$-norm on $\bV_h$ is 
\begin{equation*}
    |\bv_h|_{1,h} = \bigl( \sum_{K \in \mathcal{T}_h}|\bv_h|_{1,K}^2 \bigr)^{\frac{1}{2}}.
\end{equation*}}

\section{Semi-discretization}\label{sec:semi-discr}
\rrb{In this section, we formulate} the semi-discrete problem corresponding to \eqref{vf} and derive the optimal error estimates. We define the local discrete bilinear forms that approximate the continuous bilinear forms on each mesh element $K$. These discrete forms are constructed to be computable using the projection operators introduced in the previous section. 

On each mesh element $K$, let us define the following discrete bilinear forms:
\begin{align*}
    m_h^K(\bv_h,\bw_h)&\rrb{:=}(\Pi^{0,K} \bv_h,\Pi^{0,K} \bw_h)+S^K((\mathbb{I}-\Pi^{0,K}) \bv_h,(\mathbb{I}-\Pi^{0,K}) \bw_h),\\
a_{i,h}^K(\bv_h,\bw_h)&\rrb{:=}a_i^K(\Pi^{\bnabla,K} \bv_h,\Pi^{\bnabla,K} \bw_h)+\frac{1}{|K|}S^K((\mathbb{I}-\Pi^{\bnabla,K}) \bv_h,(\mathbb{I}-\Pi^{\bnabla,K}) \bw_h),
\end{align*}
for $i = 1,2$. Here, $S^K(\rrb{\cdot,\cdot})$ is the stabilization term on $\bV^K_h$ which is symmetric and positive definite\rrb{,} with the property that there exists a constant $C_s\rrb{>0}$, independent of $h$, such that \rrb{for all $\bv_h\in \bV^K_h$ satisfying $\Pi^{\bnabla,K} \bv_h=\bzero$, there holds}
$$C_s^{-1}\rrb{\|\bv_h\|^2}_{0,K}\le S^K(\bv_h,\bv_h)\le C_s \rrb{\|\bv_h\|^2}_{0,K}.$$ 
A valid choice for the stabilization bilinear form is as follows (see \cite{vem}):
$$S^K(\bu_h,\bv_h)=\sum_i \chi_i(\bu_h)\cdot\chi_i(\bv_h),$$
where $\chi_i$ {is the} $i$-th local degree of freedom {associated with} the element $K$. \aer{While this operator is standard, we utilize a modified, more robust stabilization in Section~\ref{sec:results}}. \rrb{By a} standard argument, we obtain the polynomial consistency of the bilinear form $m_h^K(\rrb{\cdot,\cdot})$. That is, for all $\bv_h \in \bV_h^K$\rrb{,} 
$$m_{h}^K(\bw,\bv_h) = (\bw,\bv_h)_K, \ \ \forall \ \bw \in \rrb{\mathbb{P}_k(K;\Rd)}.$$

\rrb{The} global discrete bilinear forms $m_h:\bV_h \times \bV_h \to \mathbb{R}$ and $a_{i,h}:\bV_h \times \bV_h \to \mathbb{R}\rrb{, for } i = 1,2\rrb{,}$ are \rrb{then} defined by
\begin{align*}
    m_h(\bu_h,\bv_h):=\sum\limits_{K\in\mathcal{T}_h}m_h^K(\bu_h,\bv_h), \quad a_{i,h}(\bu_h,\bv_h):=\sum\limits_{K\in\mathcal{T}_h}a_{i,h}^K(\bu_h,\bv_h).
\end{align*}

\rrb{With the discrete bilinear forms $a_{i,h}(\bullet,\bullet)$ defined as above, the standard consistency property is not perfectly satisfied. That is, for $i=1,2$, there exist functions $\bu_h, \bv_h \in \bV_h$ where one of them is a polynomial, yet $a_{i,h}(\bu_h,\bv_h)\neq a_i(\bu_h,\bv_h)$.}
We now prove the following consistency error estimate\rrb{s}.

\begin{lemma}\label{lemma_consistency_tau}
For $\btau\in \mathbb{P}\rrb{_k}(K\rrb{;}\mathbb{M})$ such that $\btau(\bx)$ is symmetric for all $\bx\in K$ and \rrb{for all} $\bv_h\in \bV_h^K$, we have  
    $$|(\btau,\bepsilon(\bv_h))_{0,K}-(\btau,\bepsilon(\Pi^{\bnabla,K}\bv_h))_{0,K}|
        \le C_{\btau} h_K^{k} \|\bv_h\|_{1,K}.$$
\end{lemma}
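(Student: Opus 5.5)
Writing $\bw_h := \bv_h - \Pi^{\bnabla,K}\bv_h$, the quantity to bound is $(\btau,\bepsilon(\bw_h))_{0,K}$. Because $\btau$ is symmetric, $(\btau,\bepsilon(\bw_h))_{0,K} = (\btau,\bnabla\bw_h)_{0,K}$. The plan is to split $\btau$ into a low-degree part, which is annihilated by the defining orthogonality of the elliptic projection, and a high-degree remainder, which carries the factor $h_K^k$.

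First, the elliptic projection gives, componentwise, $(\nabla(v_j - \Pi^{\nabla,K} v_j),\nabla q)_K = 0$ for all $q\in\mathbb{P}_k(K)$. The gradients $\nabla q$ with $q\in\mathbb{P}_k(K)$ do not span $\mathbb{P}_{k-1}(K;\Rd)$ in general; they only give the gradient fields. So I will use integration by parts instead. For any $\bsigma\in\mathbb{P}_{k-1}(K;\mathbb{M})$,
\begin{equation*}
(\bsigma,\bnabla\bw_h)_K = -(\bnabla\cdot\bsigma,\bw_h)_K + \langle \bsigma\bn,\bw_h\rangle_{\partial K}.
\end{equation*}
This is not automatically zero, since $\bw_h$ is not orthogonal to $\mathbb{P}_{k-2}$ and its boundary traces are not controlled. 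The cleanest route is therefore to keep the $\btau$-dependent constant and not look for exact cancellation of every polynomial.

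Concretely, let $\btau_0 := \btau(\bx_K)$ be the constant part of $\btau$. For constant $\btau_0$ we have $\btau_0 = \bnabla(\btau_0(\bx-\bx_K))$ row-wise, and each row of $\btau_0(\bx-\bx_K)$ is a linear polynomial. Applying the orthogonality with the linear polynomials $q_j = (\btau_0(\bx-\bx_K))_j$ gives $(\btau_0,\bnabla\bw_h)_K = 0$. Hence
\begin{equation*}
(\btau,\bnabla\bw_h)_K = (\btau-\btau_0,\bnabla\bw_h)_K,
\end{equation*}
and Cauchy--Schwarz yields
\begin{equation*}
|(\btau,\bnabla\bw_h)_K| \le \|\btau-\btau_0\|_{0,K}\,|\bw_h|_{1,K}.
\end{equation*}

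For the first factor, a Taylor or mean-value estimate on the star-shaped element gives $\|\btau-\btau_0\|_{L^\infty(K)} \le h_K|\btau|_{W^{1,\infty}(K)}$. For the second factor, $H^1$-stability of the elliptic projection gives $|\bw_h|_{1,K}\le |\bv_h|_{1,K}\le \|\bv_h\|_{1,K}$. Together these produce a bound of the form $C_{\btau} h_K\,\|\bv_h\|_{1,K}$, where $C_{\btau}$ absorbs $|K|^{1/2}$ and the derivative norms of $\btau$.

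To reach the power $h_K^k$ claimed in the statement, I would refine this by subtracting from $\btau$ its full Taylor polynomial of degree $k-1$ about $\bx_K$, call it $T_{k-1}\btau$, leaving a remainder of size $h_K^k|\btau|_{W^{k,\infty}}$. It then remains to show $(T_{k-1}\btau,\bnabla\bw_h)_K=0$. Each row of $T_{k-1}\btau$ would need to be a gradient of some $q\in\mathbb{P}_k(K)$, but this holds only for curl-free rows. For $\btau$ coming from $A_i\bepsilon(\bp)$ with $\bp$ polynomial, the relevant structure is the symmetric-gradient pairing; one can use the vector elliptic projection if $\Pi^{\bnabla,K}$ is defined with respect to $a_i$. \textbf{This is the main obstacle.} With the componentwise $\Pi^{\nabla,K}$ as defined in the paper, exact orthogonality holds only for gradient fields.

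My fallback is to exploit that $C_\btau$ is allowed to depend on $\btau$ and $h_K\le \operatorname{diam}\Omega$, and to treat the non-gradient part of $\btau$ through the integration-by-parts identity. In that identity the boundary and interior terms are bounded by the norm of $\bw_h$, using the approximation estimate $\|\bw_h\|_{0,K}\le Ch_K|\bv_h|_{1,K}$ and a trace inequality, with the constants absorbed into $C_{\btau}$.
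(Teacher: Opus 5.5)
There is a genuine gap here. What you actually prove is
$|(\btau,\bnabla\bw_h)_K|\le\|\btau-\btau_0\|_{0,K}|\bv_h|_{1,K}\le C h_K|K|^{1/2}|\btau|_{W^{1,\infty}(K)}|\bv_h|_{1,K}$.
Neither of your routes takes you from there to $h_K^k$.

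\emph{The Taylor route.} It fails for the reason you give. The componentwise $\Pi^{\bnabla,K}$ annihilates only rows lying in $\bnabla\mathbb{P}_k(K)$. A row such as $(0,x)=\tfrac12(-y,x)+\tfrac12\bnabla(xy)$ has a rotational part that the orthogonality never removes.

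\emph{The fallback.} It does not help either. The inequality $h_K\le\operatorname{diam}\Omega$ lets you trade a higher power of $h_K$ for a lower one, not the reverse. In the integration-by-parts identity, $\bw_h$ is only in $H^1(K)$. So $\|\bw_h\|_{0,K}\le Ch_K|\bv_h|_{1,K}$ plus a trace inequality gains exactly one power of $h_K$ and no more.

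\emph{What your bound does give.} Keep $|K|^{1/2}\le Ch_K^{d/2}$ instead of absorbing it into $C_{\btau}$. Your bound is then of order $h_K^{1+d/2}$. Using $h_K\le\operatorname{diam}\Omega$ in the correct direction, this proves the statement for $k\le 2$ in both $d=2$ and $d=3$. The example below shows you cannot go further in general.

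\emph{The paper's proof.} It does not supply the missing idea. After the same symmetry step, it applies Cauchy--Schwarz and asserts $\|\bnabla(\bv_h-\Pi^{\bnabla,K}\bv_h)\|_{0,K}\le Ch_K^k\|\bv_h\|_{1,K}$, with $\|\btau\|_K$ absorbed into $C_{\btau}$. That is the approximation estimate for functions in $H^{k+1}(K)$, with $|\bv_h|_{k+1,K}$ on the right. For a generic virtual function, only $|\bv_h-\Pi^{\bnabla,K}\bv_h|_{1,K}\le|\bv_h|_{1,K}$ is available.

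\emph{A counterexample for $k=3$.} Your obstacle is unavoidable. Work in 2D with $k=3$ and set:
$\btau=\begin{pmatrix}0&x\\x&0\end{pmatrix}$;
$K=h\hat K$ with $\hat K=(-\tfrac12,\tfrac12)^2$;
$\bv_h=(v,0)$ with $v(\bx)=\hat v(\bx/h)$, where $\hat v\in W_h^{\hat K}$ has trace $x(x^2-\tfrac14)$ on the bottom edge and zero on the other edges.

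The field $(-y,x)$ is divergence-free. A direct check on monomials shows it is $L^2(K)$-orthogonal to $\bnabla\mathbb{P}_3(K)$. Hence the left-hand side of the lemma equals $\tfrac12\int_{\partial K}v\,(-y,x)\cdot\bn\,\mathrm{d}s=h^2/240$. Meanwhile $h_K^3\|\bv_h\|_{1,K}=O(h^3)$, because $|\bv_h|_{1,K}=|\hat v|_{1,\hat K}$.

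So no $h_K$-independent $C_{\btau}$ exists for $k=3$. What can be proved is the bound you derived. Exact polynomial consistency of $a_{i,h}$ would require a different projection, for instance one defined through $a_i$ itself, as you suggest.
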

\begin{proof}
\rrb{Using the} symmetry of $\btau$, \rrb{we obtain}
\begin{align*}
        |(\btau,\bepsilon(\bv_h))_{0,K}-(\rrb{\btau},\bepsilon(\Pi^{\bnabla,K}\bv_h))_{0,K}| &= |(\btau,\bnabla \bv_h)_{0,K}-(\btau,\bnabla\Pi^{\bnabla,K}\bv_h)_{0,K}| \\
        &= |(\btau,\bnabla \bv_h-\bnabla\Pi^{\bnabla,K}\bv_h)_{0,K}|\rrb{,}
    \end{align*}
and applying the Cauchy--Schwarz inequality, we readily \rrb{deduce} that
    \begin{align*}
        |(\btau,\bepsilon(\bv_h))_{0,K}-(\rrb{\btau},\bepsilon(\Pi^{\bnabla,K}\bv_h))_{0,K}| &\leq C\|\btau\|_K\|\bnabla \bv_h-\bnabla\Pi^{\bnabla,K}\bv_h\|_{0,K} \\
        &\leq C_{\btau}h_K^k\|\bv_h\|_{1,K}.
    \end{align*}
    \rrb{This completes the proof.}
\end{proof}

\begin{lemma}
    Let $\bP \in \mathbb{P}_k(K\rrb{;}\Rd)$ for all $K \in \mathcal{T}_h$. Then for all $\bv_h \in \bV_h^K$, we have
    \begin{equation*}
        \rrb{|}a_{i,h}^K(\bP,\bv_h) - a_{i}^K(\bP,\bv_h)\rrb{|} \leq C_{\bP}h_K^k\|\bv_h\|_{1,K} \quad \text{for } i = 1,2. 
    \end{equation*}
\end{lemma}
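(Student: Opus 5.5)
Since $\bP$ is a polynomial of degree $k$ on $K$, the elliptic projector reproduces it: $\Pi^{\bnabla,K}\bP=\bP$. This holds because $\Pi^{\nabla,K}$ is a projection onto $\mathbb{P}_k(K)$, so its defining conditions are satisfied by $q=\bP$ componentwise. Consequently $(\mathbb{I}-\Pi^{\bnabla,K})\bP=\bzero$. The stabilization term in $a_{i,h}^K(\bP,\bv_h)$ then vanishes by bilinearity of $S^K$, and we are left with
\begin{equation*}
a_{i,h}^K(\bP,\bv_h)=a_i^K(\bP,\Pi^{\bnabla,K}\bv_h)=(A_i\bepsilon(\bP),\bepsilon(\Pi^{\bnabla,K}\bv_h))_{0,K}.
\end{equation*}
The consistency error therefore reduces to
\begin{equation*}
a_{i,h}^K(\bP,\bv_h)-a_i^K(\bP,\bv_h)=(\btau,\bepsilon(\Pi^{\bnabla,K}\bv_h))_{0,K}-(\btau,\bepsilon(\bv_h))_{0,K},\qquad \btau:=A_i\bepsilon(\bP).
\end{equation*}

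Next I check that $\btau$ meets the hypotheses of Lemma~\ref{lemma_consistency_tau}. Since $\bP\in\mathbb{P}_k(K;\Rd)$, the strain $\bepsilon(\bP)$ is a symmetric matrix-valued polynomial of degree at most $k-1$, hence it lies in $\mathbb{P}_k(K;\mathbb{M})$. Applying $A_i\btau=2\mu_i\btau+\lambda_i\,\mathrm{tr}(\btau)\mathbb{I}$ with constant Lam\'e coefficients preserves both polynomial degree and symmetry. Lemma~\ref{lemma_consistency_tau} then gives the bound $C_{\btau}h_K^k\|\bv_h\|_{1,K}$, and we set $C_{\bP}:=C_{A_i\bepsilon(\bP)}$, which depends on $\bP$ and on $\mu_i,\lambda_i$. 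This argument works for $i=1,2$ alike.

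The only delicate point is the $h_K^k$ factor, which is inherited from the previous lemma. There it comes from bounding $\|\bnabla(\bv_h-\Pi^{\bnabla,K}\bv_h)\|_{0,K}$, absorbing the scaling into the $\btau$-dependent constant. I take that lemma as given and do not reprove it. Should a sharper argument be needed, I would instead use the orthogonality of $\Pi^{\bnabla,K}$ against gradients of polynomials, combined with a symmetry argument, to show that the difference is controlled. For the present statement, however, the reduction above, together with the verification that $\Pi^{\bnabla,K}\bP=\bP$ and that $\btau$ is symmetric and polynomial, suffices.
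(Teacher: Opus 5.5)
Your proposal coincides with the paper's proof. The paper likewise uses $\Pi^{\bnabla,K}\bP=\bP$ to eliminate the stabilization, rewrites the difference as $a_i^K(\bP,\Pi^{\bnabla,K}\bv_h-\bv_h)$, and concludes with Lemma~\ref{lemma_consistency_tau} for $\btau=A_i\bepsilon(\bP)$; your check that $\btau$ is symmetric and polynomial only makes explicit what the paper leaves implicit.

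One caution, which applies equally to the paper: the factor $h_K^k$ does not come from ``absorbing the scaling into the constant''. For a generic virtual function, e.g.\ one with $\Pi^{\bnabla,K}\bv_h=\bzero$, one has $\|\bnabla(\bv_h-\Pi^{\bnabla,K}\bv_h)\|_{0,K}=|\bv_h|_{1,K}$, so the chain in Lemma~\ref{lemma_consistency_tau} only yields the factor $\|\btau\|_{0,K}=O(h_K^{d/2})$. Your orthogonality fallback in general only improves this to $O(h_K^{1+d/2})$, because only gradients of polynomials can be subtracted from $\btau$ and its rotational linear part survives. Hence for $k\ge 3$ neither route actually secures the rate $h_K^k$.
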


\begin{proof}
    \rrb{Since $\bP \in \mathbb{P}_k(K;\Rd)$, we have for $i = 1,2$ that}
    \begin{align*}
        a_{i,h}^K(\bP,\bv_h) = a_i^K(\bP,\Pi^{\bnabla,K} \bv_h).
    \end{align*}
    \rrb{This} allows us to write
    \begin{align*}
         \rrb{|}a_{i,h}^K(\bP,\bv_h) - a_{i}^K(\bP,\bv_h)\rrb{|} = \rrb{|}a_{i}^K(\bP,\Pi^{\bnabla,K} \bv_h - \bv_h)\rrb{|}.
    \end{align*}
    Using the definition of $a_i^K(\rrb{\cdot, \cdot})$ and the result obtained in Lemma \ref{lemma_consistency_tau}, we deduce that
    \begin{align*}
         \rrb{|}a_{i,h}^K(\bP,\bv_h) - a_{i}^K(\bP,\bv_h)\rrb{|} \rrb{\le} C_{\bP}h_K^k\|\bv_h\|_{1,K},
    \end{align*}
which concludes the proof.
\end{proof}

We now state the results on \rrb{the} continuity and coercivity of the discrete bilinear forms. \rrb{Their} proofs are straightforward and are therefore omitted (see, e.g.,  \cite{zhao2022stabilized}). 

\begin{theorem}
    For all $\bu_h, \bv_h \in \bV_h$, it holds that
    \begin{align*}
        m_h(\bu_h,\bv_h) \leq C \|\bu_h\|\|\bv_h\|, \quad a_{1,h}(\bu_h,\bv_h) \leq C|\bu_h|_{1,h}|\bv_h|_{1,h}, \quad a_{2,h}(\bu_h,\bv_h) \leq C|\bu_h|_{1,h}|\bv_h|_{1,h}.
    \end{align*}
\end{theorem}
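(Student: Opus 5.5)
The plan is to prove each bound elementwise and then sum over $K\in\mathcal{T}_h$ using the discrete Cauchy--Schwarz inequality $\sum_K x_Ky_K\le(\sum_K x_K^2)^{1/2}(\sum_K y_K^2)^{1/2}$. Each local form is a sum of a polynomial (consistency) part and a stabilization part. Since $S^K$ is symmetric and positive definite, it is an inner product on $\bV_h^K$, so Cauchy--Schwarz applies to it: $S^K(\bu,\bv)\le S^K(\bu,\bu)^{1/2}S^K(\bv,\bv)^{1/2}$. It therefore suffices to bound the local ``diagonal'' quantities $m_h^K(\bv_h,\bv_h)$ and $a_{i,h}^K(\bv_h,\bv_h)$ by $C\|\bv_h\|_{0,K}^2$ and $C|\bv_h|_{1,K}^2$, respectively.

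For $m_h^K$, the consistency part is handled by the $L^2$-stability of the $L^2$-projection: $\|\Pi^{0,K}\bv_h\|_{0,K}\le\|\bv_h\|_{0,K}$. For the stabilization part, I apply the stability bound of $S^K$ to $(\mathbb{I}-\Pi^{0,K})\bv_h$, which gives $S^K((\mathbb{I}-\Pi^{0,K})\bv_h,(\mathbb{I}-\Pi^{0,K})\bv_h)\le C_s\|(\mathbb{I}-\Pi^{0,K})\bv_h\|_{0,K}^2\le C_s\|\bv_h\|_{0,K}^2$. Strictly, the stated assumption on $S^K$ is for kernel functions of $\Pi^{\bnabla,K}$. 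I will use the usual standing assumption that the upper bound $S^K(\bw,\bw)\le C_s\|\bw\|_{0,K}^2$ holds on all of $\bV_h^K$, or at least on the ranges of $\mathbb{I}-\Pi^{0,K}$ and $\mathbb{I}-\Pi^{\bnabla,K}$. This holds for the dofi--dofi choice by a scaling argument together with norm equivalence on the reference element, under the mesh assumptions.

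For $a_{i,h}^K$, the consistency part satisfies $a_i^K(\Pi^{\bnabla,K}\bu_h,\Pi^{\bnabla,K}\bv_h)\le C|\Pi^{\bnabla,K}\bu_h|_{1,K}|\Pi^{\bnabla,K}\bv_h|_{1,K}$ by the continuity of $a_i$ restricted to $K$. Then $|\Pi^{\bnabla,K}\bv_h|_{1,K}\le|\bv_h|_{1,K}$, since $\Pi^{\bnabla,K}$ is the $H^1$-seminorm orthogonal projection. For the stabilization, $(\mathbb{I}-\Pi^{\bnabla,K})\bv_h$ lies in the kernel of $\Pi^{\bnabla,K}$. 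Hence
\[
\tfrac{1}{|K|}S^K\bigl((\mathbb{I}-\Pi^{\bnabla,K})\bv_h,(\mathbb{I}-\Pi^{\bnabla,K})\bv_h\bigr)\le C\,\|(\mathbb{I}-\Pi^{\bnabla,K})\bv_h\|_{0,K}^2\,h_K^{-2}\cdot\bigl(h_K^2/|K|\bigr).
\]
Next I use a Poincaré-type inequality: $(\mathbb{I}-\Pi^{\bnabla,K})\bv_h$ has zero vertex average, by the second condition defining $\Pi^{\bnabla,K}$. This gives $\|(\mathbb{I}-\Pi^{\bnabla,K})\bv_h\|_{0,K}\le Ch_K|(\mathbb{I}-\Pi^{\bnabla,K})\bv_h|_{1,K}\le Ch_K|\bv_h|_{1,K}$. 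The constant depends only on $\gamma_1,\gamma_2$ because of the star-shapedness assumption.

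The main obstacle is the scaling of the $1/|K|$ prefactor. Taken literally with the $L^2$ bound on $S^K$, we get $h_K^2/|K|\sim h_K^{2-d}$, which is bounded only for $d=2$. In 2D the mesh regularity gives $|K|\ge ch_K^2$, so the factor is uniformly bounded and the proof closes. For $d=3$, I would instead invoke the scaling of the dofi--dofi stabilization: $S^K$ is actually equivalent to $h_K^{d-2}|\cdot|_{1,K}^2$ on the kernel of $\Pi^{\bnabla,K}$, where the dofs are normalized, as in \cite{zhao2022stabilized}. I would therefore interpret the combination $\frac{1}{|K|}S^K$ as scaling like $|\cdot|_{1,K}^2$, which yields the bound $C|\bv_h|_{1,K}^2$ uniformly in $h$. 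Summing over elements then finishes the proof for both $a_{1,h}$ and $a_{2,h}$.
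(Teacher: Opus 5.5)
Your treatment of $m_h$, and of $a_{1,h},a_{2,h}$ for $d=2$, is the standard argument the paper has in mind when it omits the proof as straightforward. That argument is local Cauchy--Schwarz for the symmetric semidefinite local forms, stability of $\Pi^{0,K}$ and $\Pi^{\bnabla,K}$, the upper bound on $S^K$, Poincar\'e on $\ker\Pi^{\bnabla,K}$ via the vertex-average condition, $|K|\ge\pi\gamma_1^2h_K^2$, and summation. It is complete in that setting. You are also right that the stated hypothesis on $S^K$ does not cover $(\mathbb{I}-\Pi^{0,K})\bv_h$ for $k\ge2$, so the upper bound must be assumed on that range too.

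The gap is the case $d=3$ for $a_{i,h}$. Your diagnosis $h_K^2/|K|\simeq h_K^{2-d}$ is correct, and it cannot be argued around. The lower bound $S^K(\bw,\bw)\ge C_s^{-1}\|\bw\|_{0,K}^2$ on $\ker\Pi^{\bnabla,K}$ gives $\frac{1}{|K|}S^K(\bw,\bw)\gtrsim h_K^{-d}\|\bw\|_{0,K}^2$. So whenever $\|(\mathbb{I}-\Pi^{\bnabla,K})\bv_h\|_{0,K}\simeq h_K|\bv_h|_{1,K}$, one gets $a_{i,h}(\bv_h,\bv_h)\gtrsim h^{-1}|\bv_h|_{1,h}^2$ on a quasi-uniform mesh. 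An example is a $k=1$ nodal function on a cubic mesh, which is trilinear on each cube. Hence, with the paper's definitions, there is no $h$-independent $C$ in 3D.

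Your repair does not close this. With the paper's normalized degrees of freedom, the dofi--dofi form behaves like $h_K^{-d}\|\cdot\|_{0,K}^2\simeq h_K^{2-d}|\cdot|_{1,K}^2$ on the kernel, not like $h_K^{d-2}|\cdot|_{1,K}^2$. The factor $h_K^{d-2}$ is what the standard $H^1$ stabilization multiplies dofi--dofi by. Likewise, dofi--dofi satisfies $S^K(\bw,\bw)\le C_s\|\bw\|_{0,K}^2$ only after multiplication by $h_K^{d}$. Even granting your scaling, $\frac{1}{|K|}S^K$ would behave like $h_K^{-2}|\cdot|_{1,K}^2$, not $|\cdot|_{1,K}^2$.

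More fundamentally, one $S^K$ cannot both be $L^2$-equivalent, as your $m_h$ step requires, and make $\frac{1}{|K|}S^K$ equivalent to $|\cdot|_{1,K}^2$ when $d=3$. So ``interpreting'' it that way assumes the conclusion. The consistent fix is to keep $S^K\simeq\|\cdot\|_{0,K}^2$ and replace the prefactor $\frac{1}{|K|}$ in $a_{i,h}^K$ by $h_K^{-2}$; the two agree up to constants only for $d=2$. Your two-dimensional argument then goes through unchanged. Otherwise the theorem has to be restricted to $d=2$.
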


\begin{theorem}
    For all $\bv_h \in \bV_h$, it holds that
    \begin{align*}
        m_h(\bv_h,\bv_h) \geq C\|\bv_h\|^2, \quad a_{1,h}(\bv_h,\bv_h) \geq \alpha_0|\bv_h|_{1,h}^2, \quad a_{2,h}(\bv_h,\bv_h) \geq \alpha_1|\bv_h|_{1,h}^2.
    \end{align*}
\end{theorem}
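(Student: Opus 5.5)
The proof is local. We establish each bound elementwise and then sum over $K\in\mathcal{T}_h$. For each form we split into its polynomial (consistency) part and its stabilization part. We bound each part below by the corresponding part of the target norm, and combine them using the Pythagorean identities
\[
\|\bv_h\|_{0,K}^2=\|\Pi^{0,K}\bv_h\|_{0,K}^2+\|(\mathbb{I}-\Pi^{0,K})\bv_h\|_{0,K}^2,
\qquad
|\bv_h|_{1,K}^2=|\Pi^{\bnabla,K}\bv_h|_{1,K}^2+|(\mathbb{I}-\Pi^{\bnabla,K})\bv_h|_{1,K}^2 .
\]
The first holds because $\Pi^{0,K}$ is $L^2$-orthogonal. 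The second holds because $\Pi^{\bnabla,K}$ is $H^1$-seminorm orthogonal by its defining equations.

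\emph{The mass form.} Consider $m_h^K(\bv_h,\bv_h)=\|\Pi^{0,K}\bv_h\|_{0,K}^2+S^K(\bw,\bw)$ with $\bw=(\mathbb{I}-\Pi^{0,K})\bv_h$. The stated stability of $S^K$ only applies to functions with vanishing elliptic projection, which $\bw$ need not have. We therefore invoke the standard VEM fact (from the scaling of the dofi--dofi stabilization on shape-regular star-shaped elements, as in \cite{vem,ahmad2013equivalent}) that $S^K(\bw,\bw)\ge C_s^{-1}\|\bw\|_{0,K}^2$ on the whole kernel of $\Pi^{0,K}$ restricted to $\bV_h^K$. The first Pythagorean identity then gives $m_h^K(\bv_h,\bv_h)\ge \min(1,C_s^{-1})\|\bv_h\|_{0,K}^2$.

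\emph{The elastic forms.} For $a_{i,h}^K$, the polynomial part is $a_i^K(\Pi^{\bnabla,K}\bv_h,\Pi^{\bnabla,K}\bv_h)\ge 2\mu_i\|\bepsilon(\Pi^{\bnabla,K}\bv_h)\|_{0,K}^2$. This is the first obstacle: a local Korn inequality is needed to control $|\Pi^{\bnabla,K}\bv_h|_{1,K}$. It fails for infinitesimal rigid motions, whose gradient is skew and nonzero. Two routes are available.
\begin{itemize}
\item[(a)] Use the global Korn inequality on $H_0^1$, applied to the full conforming function $\bv_h$.
\item[(b)] Treat the rotational part through the stabilization term.
\end{itemize}
We take route (a) and argue globally. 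The aim is to show
\[
a_{i,h}(\bv_h,\bv_h)\ \ge\ c\sum_K\Bigl(\|\bepsilon(\Pi^{\bnabla,K}\bv_h)\|_{0,K}^2+|(\mathbb{I}-\Pi^{\bnabla,K})\bv_h|_{1,K}^2\Bigr)\ \ge\ c\,\|\bepsilon(\bv_h)\|_{0}^2 .
\]
The second inequality follows from the triangle inequality $\|\bepsilon(\bv_h)\|_{0,K}\le\|\bepsilon(\Pi^{\bnabla,K}\bv_h)\|_{0,K}+|(\mathbb{I}-\Pi^{\bnabla,K})\bv_h|_{1,K}$. Korn's inequality on $\bV$ then gives $\|\bepsilon(\bv_h)\|_0^2\ge C|\bv_h|_{1}^2=C|\bv_h|_{1,h}^2$, since $\bV_h\subset\bV$.

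\emph{The stabilization term (main obstacle).} It remains to justify
\[
\tfrac{1}{|K|}S^K(\bz,\bz)\ge c\,|\bz|_{1,K}^2,\qquad \bz=(\mathbb{I}-\Pi^{\bnabla,K})\bv_h,
\]
for which $\Pi^{\bnabla,K}\bz=\bzero$. The hypothesis gives $S^K(\bz,\bz)\ge C_s^{-1}\|\bz\|_{0,K}^2$, so we must bound $|\bz|_{1,K}$ by $\|\bz\|_{0,K}$, with the scaling factor $1/|K|\sim h_K^{-d}$ absorbing the powers of $h_K$. We would attempt this with an inverse inequality $|\bz|_{1,K}\le Ch_K^{-1}\|\bz\|_{0,K}$ on the virtual space, which holds under the two mesh assumptions (see \cite{brenner}). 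The $h$-scaling, $h_K^{-d}$ versus $h_K^{-2}$, is not exact for $d=3$; we would resolve it by reading the hypothesis on $S^K$ in its natural scaled dofi--dofi form, $S^K(\bz,\bz)\sim h_K^{d-2}|\bz|_{1,K}^2$, as in \cite{vem,zhao2022stabilized}. This yields the constants $\alpha_0,\alpha_1$, depending on $\mu_i$, $C_s$, the Korn constant and the mesh regularity parameters. Summing over $K$ completes the argument.
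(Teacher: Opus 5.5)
Your argument is correct. It is the standard proof that the paper omits: the paper only calls the result straightforward and points to \cite{zhao2022stabilized}. Bounding $\|\bepsilon(\bv_h)\|_{0,K}$ by the triangle inequality and then applying Korn's inequality globally on $H_0^1$ is the right way to handle the fact that the polynomial part only controls $\|\bepsilon(\Pi^{\bnabla,K}\bv_h)\|_{0,K}$, which vanishes on rigid motions.

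You are also right that the mass bound needs stability of $S^K$ on $\ker\Pi^{0,K}$, and the stated hypothesis does not supply this. It only controls $S^K$ on $\ker\Pi^{\bnabla,K}$, and for $k\ge 2$ these two kernels differ; for $k=1$ the enhancement gives $\Pi^{0,K}=\Pi^{\bnabla,K}$, so the stated hypothesis suffices there.

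Your 3D scaling worry, by contrast, is harmless for lower bounds. Using $|K|\le Ch_K^d$ and the inverse inequality, $\frac{1}{|K|}S^K(\boldsymbol{z},\boldsymbol{z})\ge c\,h_K^{2-d}|\boldsymbol{z}|_{1,K}^2\ge c\,\mathrm{diam}(\Omega)^{2-d}|\boldsymbol{z}|_{1,K}^2$. So no reinterpretation of the hypothesis on $S^K$ is needed.
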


\subsection{Semi-discrete formulation}
The semi-discrete problem is to find $\rrb{\bu_h \in C^2([0,T]; \bV_h)}$ such that $\bu_h(0)=\bu_{h,0}$ and $\bu_h'(0)=\bu_{h,1}$, where $\bu_{h,0}$ and $\bu_{h,1}$ are \rrb{approximations} of the given initial data $\bu_0$ and $\bu_1$, respectively, which are computable using \rrb{the} degrees of freedom of the space $\bV_h$\rrb{, and satisfy:}
\begin{equation}\label{dv}
m_h(\bu_h'',\bv_h)+ a_{1,h}(\bu_h',\bv_h)+a_{2,h}(\bu_h,\bv_h)=(\bf_h,\bv_h)\rrb{\quad \text{for all } \bv_h\in \bV_h, \ \text{a.e. } t\in(0,T],}
\end{equation}
where $\bf_h|_K = \Pi^{0,K}\bf$ for all $K \in \mathcal{T}_h$.

Next, we aim to establish the well-posedness of the semi-discrete problem. \rrb{To this end}, we first prove the stability of the approximate solution and then we discuss the existence of \rrb{a} unique solution. We have the following result on the stability of the approximate solution:

\begin{theorem}
    Let $\bu_h \in \bV_h$ be the solution of \rrb{problem} \eqref{dv}. Then\rrb{, we have} 
    \begin{equation*}
        \sup_{\rrb{t\in [0,T]}}\left(\|\bu_h'(t)\| + |\bu_h(t)|_{1,h} \right) \leq C \left( \|\bu_{h,1}\| + |\bu_{h,0}|_{1,h} + \rrb{\int_0^T}\|\bf_h(s)\|\ds \right).
    \end{equation*}
\end{theorem}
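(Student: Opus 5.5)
We will use the standard energy argument, testing \eqref{dv} with $\bv_h=\bu_h'(t)\in\bV_h$. Since $m_h$ and $a_{2,h}$ are symmetric, this gives
\begin{equation*}
\frac12\frac{\mathrm{d}}{\dt}\Big(m_h(\bu_h',\bu_h')+a_{2,h}(\bu_h,\bu_h)\Big)+a_{1,h}(\bu_h',\bu_h')=(\bf_h,\bu_h').
\end{equation*}
The dissipative term $a_{1,h}(\bu_h',\bu_h')\ge \alpha_0|\bu_h'|_{1,h}^2\ge 0$ has the favourable sign and can simply be dropped. Define the discrete energy
\begin{equation*}
E(t):=m_h(\bu_h'(t),\bu_h'(t))+a_{2,h}(\bu_h(t),\bu_h(t)).
\end{equation*}
By the coercivity theorem, $E(t)\ge C\big(\|\bu_h'(t)\|^2+|\bu_h(t)|_{1,h}^2\big)$. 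By the continuity theorem, $E(0)\le C\big(\|\bu_{h,1}\|^2+|\bu_{h,0}|_{1,h}^2\big)$. These two-sided bounds on the energy are the only properties of the discrete forms we need.

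Integrating from $0$ to $t$ and using Cauchy--Schwarz gives
\begin{equation*}
E(t)\le E(0)+2\int_0^t\|\bf_h(s)\|\,\|\bu_h'(s)\|\ds\le E(0)+C\int_0^t\|\bf_h(s)\|\,E(s)^{1/2}\ds.
\end{equation*}
The second inequality uses $\|\bu_h'\|\le C\,m_h(\bu_h',\bu_h')^{1/2}\le C E^{1/2}$, which follows from the coercivity of $m_h$.

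The only delicate point is to get a linear dependence on $\int_0^T\|\bf_h\|$, rather than a Gronwall factor $e^{CT}$ or a quadratic $L^2$-in-time norm. For this we use the standard sup trick. Set $M:=\sup_{s\in[0,T]}E(s)^{1/2}$, which is finite since $\bu_h\in C^2([0,T];\bV_h)$. Taking the supremum over $t\in[0,T]$ yields
\begin{equation*}
M^2\le E(0)+CM\int_0^T\|\bf_h\|\ds.
\end{equation*}
Young's inequality then gives $M^2\le 2E(0)+C^2\big(\int_0^T\|\bf_h\|\ds\big)^2$. (Equivalently, one can argue by the Bihari/Gronwall lemma applied to $E^{1/2}$.)

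Taking square roots, and using $\sqrt{a+b}\le\sqrt a+\sqrt b$ together with the lower bound on $E(t)$ and the upper bound on $E(0)$, yields
\begin{equation*}
\sup_t\big(\|\bu_h'\|+|\bu_h|_{1,h}\big)\le C\Big(\|\bu_{h,1}\|+|\bu_{h,0}|_{1,h}+\int_0^T\|\bf_h\|\ds\Big),
\end{equation*}
which is the claimed estimate. The constant is independent of $h$ because the constants in the continuity and coercivity theorems are, and it is also independent of $T$.
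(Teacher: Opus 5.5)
Your proof is correct and is essentially the paper's argument: test \eqref{dv} with $\bu_h'$, integrate in time, bound the load term by $\sup_{s}\|\bu_h'(s)\|\int_0^T\|\bf_h(s)\|\ds$, and absorb. The one difference is the last step. The paper divides both sides by $\max_{s}(\|\bu_h'(s)\|+|\bu_h(s)|_{1,h})$, which strictly needs a trivial case distinction to handle the initial-data term; your Young's inequality absorption handles that term directly and is slightly cleaner.
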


\begin{proof}
    \rrb{Taking} $\bv_h = \bu_h'$ in \eqref{dv}\rrb{, we} obtain
    \begin{equation*}
        m_h(\bu_h'',\bu_h')+ a_{1,h}(\bu_h',\bu_h')+a_{2,h}(\bu_h,\bu_h')=(\bf_h,\bu_h'),
    \end{equation*}
which \rrb{implies} 
    \begin{equation*}
        \frac{1}{2}\frac{\mathrm{d}}{\dt}\left(m_h(\bu_h',\bu_h') +a_{2,h}(\bu_h,\bu_h)\right) + a_{1,h}(\bu_h',\bu_h')=(\bf_h,\bu_h').
    \end{equation*}
    Using the coercivity of the bilinear form $a_{1,h}(\rrb{\cdot,\cdot})$ and the Cauchy--Schwarz inequality, the above \rrb{identity yields the inequality} 
    \begin{equation}\label{energy_eqn1}
        \frac{1}{2}\frac{\mathrm{d}}{\dt}\left(m_h(\bu_h',\bu_h') +a_{2,h}(\bu_h,\bu_h)\right) + \alpha_0|\bu_h'|_{1,h}\rrb{^2} \leq C\|\bf_h\|\|\bu_h'\|.
    \end{equation}
    Integrating \rrb{both} sides from \rrb{$0$} to $t$, and using the coercivity of the \rrb{remaining} bilinear forms, yields
    \begin{equation*}
        \|\bu_h'(t)\|^2 + |\bu_h(t)|_{1,h}^2 \leq C \left( \|\bu_{h,1}\|^2 + |\bu_{h,0}|_{1,h}^2 + \int_0^t\|\bf_h(s)\|\|\bu_h'(s)\|\ds \right).
    \end{equation*}
    Furthermore, the above inequality can be rewritten as:
    \begin{align*}
        \left(\|\bu_h'(t)\| + |\bu_h(t)|_{1,h}\right)^2 &\leq C \left( \left(\|\bu_{h,1}\| + |\bu_{h,0}|_{1,h} \right)^2 + \rrb{\int_0^t}\|\bf_h(s)\|\|\bu_h'(s)\|\ds \right) \\
        & \leq C \left( \left(\|\bu_{h,1}\| + |\bu_{h,0}|_{1,h} \right)^2 + \max_{s\in [0,T]}\|\bu_h'(s)\|\rrb{\int_0^T}\|\bf_h(s)\|\ds \right) \\
        & \leq C \left( \left(\|\bu_{h,1}\| + |\bu_{h,0}|_{1,h} \right)^2 + \max_{s\in [0,T]}\left(\|\bu_h'(s)\| + |\bu_h(s)|_{1,h}\right)\rrb{\int_0^T}\|\bf_h(s)\|\ds \right).
    \end{align*}
    Finally, \rrb{taking the supremum over $t \in [0,T]$ on the left-hand side and} dividing both sides by $\displaystyle \max_{s\in [0,T]}\left(\|\bu_h'(s)\| + |\bu_h(s)|_{1,h}\right)$\rrb{,} we \rrb{arrive at the desired bound.}
\end{proof}

\rrb{By expanding $\bu_h$ and $\bu_h'$ in terms of a basis of $\bV_h$}, and considering the \rrb{symmetry and positive-definiteness} of $m_h(\rrb{\cdot,\cdot})$, the semi-discrete problem can be represented as a system of linear first-order ordinary differential equations. Thus, by Picard's theorem, \rrb{there exists} a unique solution \rrb{to} the semi-discrete problem \rrb{on} the interval $[0,T]$.

\begin{remark}From equation \eqref{energy_eqn1}, we observe that in \rrb{the} case of \rrb{a} zero load, \textit{i.e.}\rrb{,} $\bf = \bzero$, the system shows \rrb{an energy-dissipative} behavior, where the energy is given by
\begin{equation}\label{energy_formula}
    \mathcal{E}(t) = \frac{1}{2}\left( \|\bu_h'(t)\|^2 + |\bu_h(t)|_{1,h}^2 \right).
\end{equation}
\rrb{The equation above indicates} that the total energy of the system is a combination of the kinetic energy and the potential energy. \end{remark}

The solution to the semi-discrete problem satisfies the following stability result (for a proof, see\rrb{, e.g.,} \cite{pradhan2024optimalwave}).
\begin{lemma}\label{lemma_stab_est_der}
    For $k = 3,4$, \rrb{assume} that $\bu_0,\bu_1 \in H_0^1(\Omega; \Rd)\cap H^{2k-2}(\Omega;\Rd)$ and suppose also \rrb{that} $\bf \in H^{k-1}(0,T;H^{2k-4}(\Omega;\Rd))$. Then\rrb{, the following bound} holds\rrb{:} 
    \begin{equation*}
        \|\bu_h^{(k)}(t)\|^2 + \int_0^t|\bu_h^{(k)}(s)|_{1,h}^2\ds \le C \left( |\bu_0|_{2k-2}^2 + |\bu_1|_{2k-2}^2 + \|\bf\|_{H^{k-1}(0,T;H^{2k-4}(\Omega;\Rd))}^2 \right).
    \end{equation*}
\end{lemma}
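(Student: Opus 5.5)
The approach is to differentiate the semi-discrete equation \eqref{dv} in time and repeat the energy argument used for the stability theorem. The data are then traced back to the initial conditions through the equation evaluated at $t=0$.

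\textbf{Step 1: time-differentiated problem.} Since $\bf_h$ inherits the time regularity of $\bf$, and since the problem is a linear constant-coefficient ODE system, we may differentiate \eqref{dv} $k-1$ times in time. Setting $\bw_h=\bu_h^{(k-1)}$, this gives
\begin{equation*}
m_h(\bw_h'',\bv_h)+a_{1,h}(\bw_h',\bv_h)+a_{2,h}(\bw_h,\bv_h)=(\bf_h^{(k-1)},\bv_h)\quad\forall\,\bv_h\in\bV_h .
\end{equation*}
We test with $\bv_h=\bw_h'=\bu_h^{(k)}$ and use the coercivity of $m_h$, $a_{1,h}$ and $a_{2,h}$. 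Young's inequality handles the load, via $(\bf_h^{(k-1)},\bu_h^{(k)})\le \tfrac12\|\bf_h^{(k-1)}\|^2+\tfrac12\|\bu_h^{(k)}\|^2$. Integrating in time and applying Gronwall's inequality then yields
\begin{equation*}
\|\bu_h^{(k)}(t)\|^2+\int_0^t|\bu_h^{(k)}|_{1,h}^2\ds\le C\Bigl(\|\bu_h^{(k)}(0)\|^2+|\bu_h^{(k-1)}(0)|_{1,h}^2+\int_0^T\|\bf^{(k-1)}\|^2\ds\Bigr),
\end{equation*}
where we used $\|\bf_h\|\le\|\bf\|$.

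\textbf{Step 2: bounding the initial time derivatives.} This is the main obstacle. Evaluating \eqref{dv} and its time derivatives at $t=0$ expresses $\bu_h^{(j)}(0)$ recursively as $\bu_h^{(j)}(0)=\Delta_{1,h}\bu_h^{(j-1)}(0)+\Delta_{2,h}\bu_h^{(j-2)}(0)+\bf_h^{(j-2)}(0)$. Here $\Delta_{i,h}:\bV_h\to\bV_h$ is the discrete operator defined by $m_h(-\Delta_{i,h}\bz_h,\bv_h)=a_{i,h}(\bz_h,\bv_h)$. Each application of $\Delta_{i,h}$ costs two spatial derivatives. Reaching $\bu_h^{(k)}(0)$ therefore consumes $2(k-2)$ derivatives of $\bu_0,\bu_1$, plus one more level for $|\bu_h^{(k-1)}(0)|_{1,h}$, which is consistent with the $H^{2k-2}$ hypothesis. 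For $\bf$, evaluating derivatives at $t=0$ costs the $H^{k-1}$-in-time (trace) and $H^{2k-4}$-in-space regularity.

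To make this rigorous, I would choose the discrete initial data $\bu_{h,0},\bu_{h,1}$ as suitable elliptic-type projections of $\bu_0,\bu_1$ (Ritz projections with respect to $a_{2,h}$, or an interpolant). With this choice one can prove the stability bound $\|\Delta_{i,h}\bz_h\|\le C\|\Delta_i \bz\|$ whenever $\bz_h$ is the corresponding projection of $\bz$. Iterating this uses the consistency lemmas above (Lemma~\ref{lemma_consistency_tau} and its corollary) to compare the discrete and continuous operators applied to polynomials, together with the continuity of $m_h$ and $a_{i,h}$.

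The most delicate point is commuting the discrete operators $\Delta_{1,h}$ and $\Delta_{2,h}$ across several levels of the recursion. Since they do not commute, I would bound each term through the continuous counterparts $\nabla\cdot A_i\bepsilon$ applied to compatible smooth data, reasoning as in \cite{pradhan2024optimalwave}. The restriction $k=3,4$ keeps the recursion to at most two or three levels, so each term can be handled explicitly. Combining Steps 1 and 2 gives the stated bound.
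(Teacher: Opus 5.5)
The paper gives no proof of this lemma; it only refers to \cite{pradhan2024optimalwave}. Your Step~1 is the standard reduction and is fine: differentiate \eqref{dv} $k-1$ times, test with $\bu_h^{(k)}$, and integrate. Step~2, which carries all the difficulty, has a genuine gap.

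Write $\Delta_i:=\bnabla\cdot(A_i\bepsilon(\cdot))$. Define $P_h$ by $m_h(P_h\boldsymbol{g},\bv_h)=(\boldsymbol{g},\bv_h)$ and $R_{i,h}$ by $a_{i,h}(R_{i,h}\boldsymbol{z},\bv_h)=a_i(\boldsymbol{z},\bv_h)$, for all $\bv_h\in\bV_h$. The bound $\|\Delta_{i,h}\boldsymbol{z}_h\|\le C\|\Delta_i\boldsymbol{z}\|$ you rely on is the identity $\Delta_{i,h}R_{i,h}\boldsymbol{z}=P_h\Delta_i\boldsymbol{z}$. Its output is an $m_h$-projection rather than a Ritz projection, so the bound cannot be iterated.

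To see this, take $\bf=\bzero$ and the matched data $\bu_{h,0}=R_{2,h}\bu_0$, $\bu_{h,1}=R_{1,h}\bu_1$. Note that $\bu_{h,1}$ must be the $a_{1,h}$-projection, not the $a_{2,h}$ one; this is also what gives $\btheta'(0)=\bzero$ in the paper's error analysis. Already for $k=3$,
\[
\bu_h'''(0)=\Delta_{1,h}P_h\bw_0+\Delta_{2,h}R_{1,h}\bu_1,\qquad \bw_0:=\Delta_1\bu_1+\Delta_2\bu_0,
\]
and neither term is covered. In the first, $\Delta_{1,h}$ acts on a non-Ritz quantity; in the second, $\Delta_{2,h}$ acts on the wrong Ritz projection. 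Lemma~\ref{lemma_consistency_tau} and the lemma after it do not help. They compare $a_{i,h}$ with $a_i$ when one argument is a polynomial and give an $O(h^{p})$ consistency error ($p$ the polynomial degree). They do not give an $h$-uniform $L^2$ bound on nested discrete operators.

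Closing the argument needs several ingredients missing from your plan, some of which are not even in the statement.
\begin{itemize}
\item Compatibility at $t=0$: $\bw_0\in H^1_0(\Omega;\Rd)$ (in general $\bf(0)+\bw_0\in H^1_0$), plus a further condition for $k=4$. This does not follow from $\bu_0,\bu_1\in H^1_0\cap H^{2k-2}$. Without it, $\Delta_{1,h}P_h\bw_0$ is unbounded as $h\to 0$, because $\|\Delta_{1,h}P_h\bw_0\|\,\|P_h\bw_0\|\ge c\,|P_h\bw_0|_{1,h}^2$ and $|P_h\bw_0|_{1,h}\to\infty$ when $\bw_0\notin H^1_0$.
\item The splittings $P_h\bw_0=R_{1,h}\bw_0+(P_h-R_{1,h})\bw_0$ and $R_{1,h}\bu_1=R_{2,h}\bu_1+(R_{1,h}-R_{2,h})\bu_1$, together with $O(h^2)$ $L^2$-estimates for the differences.
\item An inverse inequality $\|\Delta_{i,h}\bv_h\|\le Ch^{-2}\|\bv_h\|$ on $\bV_h$. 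This needs a quasi-uniform mesh family, which is not among the paper's mesh assumptions.
\end{itemize}

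Even with these, your claim that the terms can be ``handled explicitly'' for $k=4$ is not automatic. At the three nested levels required there (e.g.\ $\Delta_{1,h}^2P_h\Delta_1\bu_1$), this route pays $h^{-4}$ against $\|(P_h-R_{1,h})\Delta_1\bu_1\|=O(h^{p+1})$. So it only closes for $p\ge 3$, not for the low degrees used in Section~\ref{sec:results}. The same objection rules out your fallback of using an interpolant as initial data: then $\Delta_{1,h}\Delta_{2,h}(I_h-R_{2,h})\bu_0$ is only controlled by $Ch^{-4}h^{p+1}$.

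Finally, your derivative count is off. $\bu_h^{(k)}(0)$ contains $\Delta_{1,h}^{k-1}\bu_{h,1}$, i.e.\ $k-1$ nested operators and $2k-2$ derivatives. So the $H^{2k-2}$ hypothesis is exhausted by $\|\bu_h^{(k)}(0)\|$ itself, while $|\bu_h^{(k-1)}(0)|_{1,h}$ needs only $2k-3$.
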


\subsection{Error analysis}

In this section\rrb{,} we derive the error estimates for the solutions of the semi-discrete problem \eqref{dv}. To establish the optimal order of convergence in both the $H^1$-norm \rrb{and} the $L^2$-norm, we need to define a suitable intermediate projection operator. We introduce the operator $\cP^h : \bV \to \bV_h$ such that for all $t \in (0,T]$, 
\begin{equation}\label{interproj}
    a_{1,h}(\cP^h\bu',\bv_h) + a_{2,h}(\cP^h\bu,\bv_h) = a_{1}(\bu',\bv_h) + a_{2}(\bu,\bv_h), \ \ \text{for all} \ \bv_h \in \bV_h,
\end{equation}
with the initial condition $\cP^h\bu(0) = \bu_h(0)$. The approximation properties of the intermediate projection operator defined above are established in the subsequent lemma; the proof of the estimates given in the lemma follows from \rrb{\cite[Lemma 3.4]{pradhan2024optimal}}. 

\begin{lemma}
    Let $\bu$ be the \rrb{unique solution of problem \eqref{vf},} and assume that $\bu \in L^2(0,T;H^{k+1}(\Omega;\Rd))$.  Then, the following bounds hold true\rrb{:} 
   \begin{subequations} \label{interprojestimates}\begin{align}
        &|\cP^h \bu-\bu|_{1,h} \leq Ch^{k} \|\bu\|_{L^2(0,T;H^{k+1}(\Omega;\Rd))}, \label{interprojerrh1}\\
        &\|\cP^h \bu-\bu\| \leq Ch^{k+1} \|\bu\|_{L^2(0,T;H^{k+1}(\Omega;\Rd))}\rrb{.} \label{interprojerrl2} 
    \end{align}
    \end{subequations}
\end{lemma}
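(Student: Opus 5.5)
We split the error through an intermediate polynomial/interpolant and study the projection error as the solution of a first-order ODE in time on $\bV_h$. Let $\bu_I(t)\in\bV_h$ denote the standard virtual element interpolant of $\bu(t)$ and $\bu_\pi(t)$ the piecewise $\mathbb{P}_k$ approximation (e.g.\ local $L^2$- or averaged Taylor polynomial). Both satisfy $|\bu-\bu_I|_{1,h}+|\bu-\bu_\pi|_{1,h}\le Ch^k|\bu|_{k+1}$, with the corresponding $h^{k+1}$ bounds in $L^2$. We write $\cP^h\bu-\bu=(\cP^h\bu-\bu_I)+(\bu_I-\bu)$ and only need to bound $\btheta:=\cP^h\bu-\bu_I\in\bV_h$, since the second term is handled by interpolation estimates.

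\textbf{Step 1 ($H^1$ bound).} Subtracting $a_{1,h}(\bu_I',\bv_h)+a_{2,h}(\bu_I,\bv_h)$ from \eqref{interproj} gives, for all $\bv_h\in\bV_h$,
\begin{equation*}
a_{1,h}(\btheta',\bv_h)+a_{2,h}(\btheta,\bv_h)=\sum_{K}\Big[a_1^K(\bu'-\bu_\pi',\bv_h)+a_{1,h}^K(\bu_\pi'-\bu_I',\bv_h)+\big(a_1^K(\bu_\pi',\bv_h)-a_{1,h}^K(\bu_\pi',\bv_h)\big)\Big]+(\text{same with }a_2,\bu).
\end{equation*}
The last bracket is controlled by the polynomial consistency lemma proved above. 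The first two are controlled by continuity together with the approximation properties of $\bu_\pi,\bu_I$. Hence the right-hand side is bounded by $Ch^k(\|\bu\|_{k+1}+\|\bu'\|_{k+1})|\bv_h|_{1,h}$.

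We test with $\bv_h=\btheta$ and use coercivity of $a_{2,h}$ and the symmetry of $a_{1,h}$, so that $a_{1,h}(\btheta',\btheta)=\frac12\frac{\mathrm d}{\dt}a_{1,h}(\btheta,\btheta)$. This yields
\begin{equation*}
\tfrac12\tfrac{\mathrm d}{\dt}a_{1,h}(\btheta,\btheta)+\alpha_1|\btheta|_{1,h}^2\le Ch^k(\|\bu\|_{k+1}+\|\bu'\|_{k+1})|\btheta|_{1,h}.
\end{equation*}
We then absorb with Young's inequality and integrate in time. The initial value $\btheta(0)=\bu_h(0)-\bu_I(0)$ is $O(h^k)$ provided $\bu_{h,0}$ is chosen as the interpolant (or any $O(h^k)$ approximation). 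This gives an estimate with the $L^2(0,T;H^{k+1})$ norm of $\bu$ and $\bu'$, which is how we read the stated norm (the authors follow \cite[Lemma 3.4]{pradhan2024optimal}).

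\textbf{Step 2 ($L^2$ bound).} We use an Aubin--Nitsche duality argument adapted to the evolution. Let $\bphi(t)\in\bV$ solve the backward-in-time dual elliptic problem $-a_1(\bv,\bphi')+a_2(\bv,\bphi)=(\bv,\bpsi)$ with terminal data, or more simply the static dual problem $a_2(\bv,\bphi)=(\bv,\be)$ for $\be=\cP^h\bu-\bu$. Convexity of $\Omega$ gives $\|\bphi\|_2\le C\|\be\|$. Galerkin-orthogonality-type identities follow from \eqref{interproj}, with extra consistency terms $a_{i,h}-a_i$ evaluated against $\bphi_I$. Each of these is bounded by $h^k\cdot h$ via the same consistency lemma applied to $\bphi-\bphi_\pi$. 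Integrating the resulting ODE inequality for $\|\be\|$ in time then yields the $h^{k+1}$ rate.

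The main obstacle is Step 2. The operator $\cP^h$ couples $\bu$ and $\bu'$ through two different bilinear forms, so a plain static duality does not close: the term $a_1(\be',\bphi)$ must be rewritten as $\frac{\mathrm d}{\dt}a_1(\be,\bphi)-a_1(\be,\bphi')$. This requires a time-dependent dual solution with $H^2$ regularity of $\bphi$ and $\bphi'$. We would first try the backward parabolic dual problem, which is exactly what \cite{pradhan2024optimal} does.
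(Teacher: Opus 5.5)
\textbf{There is a genuine gap: Step~2 does not prove the $L^2$ bound, and Step~1 proves a weaker $H^1$ bound than stated.}

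The paper does not prove this lemma itself; it defers to \cite[Lemma 3.4]{pradhan2024optimal}. Your Step~1 is essentially sound, though it involves $\bu'$ (see the second paragraph). Step~2 is not a proof. You list two dual problems, concede that the static one built on $a_2$ does not close, and leave the time-dependent one unexecuted, so \eqref{interprojerrl2} is never established.

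The obstacle comes from dualizing with the wrong form. Set $\boldsymbol{e}:=\cP^h\bu-\bu$ and first integrate \eqref{interproj} over $(0,t)$. This shows that $\cP^h$ is a Ritz--Volterra-type projection:
\begin{equation*}
a_{1,h}(\cP^h\bu(t),\bv_h)+\int_0^t a_{2,h}(\cP^h\bu,\bv_h)\ds=a_1(\bu(t),\bv_h)+\int_0^t a_2(\bu,\bv_h)\ds+a_{1,h}(\cP^h\bu(0),\bv_h)-a_1(\bu(0),\bv_h).
\end{equation*}
Then use the static dual problem for $a_1$: find $\bphi$ with $a_1(\bv,\bphi)=(\bv,\boldsymbol{g})$ for all $\bv\in\bV$, so that $\|\bphi\|_2\le C\|\boldsymbol{g}\|$. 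Write $(\boldsymbol{e}(t),\boldsymbol{g})=a_1(\boldsymbol{e}(t),\bphi-\bphi_I)+a_1(\boldsymbol{e}(t),\bphi_I)$ and insert the identity above with $\bv_h=\bphi_I$. Each resulting term falls into one of two classes:
\begin{itemize}
\item terms that are $O(h^{k+1})\|\boldsymbol{g}\|$, using your $H^1$ bound, interpolation estimates, the consistency argument you sketch, and $\|\boldsymbol{e}(0)\|=O(h^{k+1})$;
\item the Volterra term $\int_0^t a_2(\boldsymbol{e},\bphi)\ds=-\int_0^t(\boldsymbol{e},\bnabla\cdot(A_2\bepsilon(\bphi)))\ds$, which is bounded by $C\|\boldsymbol{g}\|\int_0^t\|\boldsymbol{e}\|\ds$ because $\boldsymbol{e}(s)\in\bV$.
\end{itemize}
Gronwall's inequality then closes the estimate. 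No backward-in-time dual is needed. Moreover, the problem $-a_1(\bv,\bphi')+a_2(\bv,\bphi)=(\bv,\bpsi)$ is not parabolic: both forms are second order, so it is an ODE driven by an order-zero operator and gives no smoothing to exploit.

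Step~1, as you acknowledge, proves a weaker bound than stated, because comparing with the interpolant $\bu_I$ puts $\|\bu'\|_{k+1}$ into the forcing. This is avoidable. Compare instead with the discrete Ritz projection $R_h\bu\in\bV_h$, defined by $a_{1,h}(R_h\bu,\bv_h)=a_1(\bu,\bv_h)$ for all $\bv_h\in\bV_h$. The forms do not depend on $t$, so $(R_h\bu)'=R_h\bu'$. For $\btheta:=\cP^h\bu-R_h\bu$ the $\bu'$ terms then cancel exactly:
\begin{equation*}
a_{1,h}(\btheta',\bv_h)+a_{2,h}(\btheta,\bv_h)=a_2(\bu,\bv_h)-a_{2,h}(R_h\bu,\bv_h)\le Ch^k\|\bu\|_{k+1}|\bv_h|_{1,h}.
\end{equation*}
Your energy argument then gives exactly $Ch^k\|\bu\|_{L^2(0,T;H^{k+1}(\Omega;\Rd))}$, which is where the norm in the statement comes from. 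The remaining pieces $|\bu(t)-R_h\bu(t)|_{1,h}$ and $\btheta(0)$ still require $\|\bu(t)\|_{k+1}$ and $\|\bu_0\|_{k+1}$, so the lemma as printed is imprecise in any case. The natural repair is to add these pointwise terms, for instance through $\|\bu\|_{L^\infty(0,T;H^{k+1}(\Omega;\Rd))}$, rather than carrying $\bu'$ through the forcing.
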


We now carry out the convergence analysis of the solution to the semi-discrete problem. We aim to obtain the optimal order of convergence under minimal regularity assumptions on the analytical solution. The following result shows the optimal rate of convergence for the displacement error in \rrb{the $H^1$-norm, as well as for} the velocity error in the $L^2$-norm.

\begin{theorem}\label{thm_semidscrt_err_est}
\rrb{Let $\bu_h$ and $\bu$ be the unique solutions of problems \eqref{dv} and \eqref{vf}, respectively, and assume that $\bu,\bu',\bu''$, and $\bf \in L^2(0,T;H^{k+1}(\Omega;\Rd))$. Then, we have the following bounds:}
\begin{align*}
    |\bu_h-\bu|_{1,h} & \le Ch^k\left( \|\bf\|_{L^2(0,T;H^{k+1}(\Omega;\Rd))} + \|\bu\|_{L^2(0,T;H^{k+1}(\Omega;\Rd))} + \|\bu''\|_{L^2(0,T;H^{k+1}(\Omega;\Rd))}  \right),\\
    \|\bu_h'-\bu'\| & \le Ch^{k+1}\left( \|\bf\|_{L^2(0,T;H^{k+1}(\Omega;\Rd))} + \|\bu'\|_{L^2(0,T;H^{k+1}(\Omega;\Rd))} + \|\bu''\|_{L^2(0,T;H^{k+1}(\Omega;\Rd))} \right).
\end{align*}
\end{theorem}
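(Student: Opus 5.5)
We split the error through the intermediate projection \eqref{interproj}:
\[
\bu_h-\bu = (\bu_h-\cP^h\bu) + (\cP^h\bu-\bu) =: \btheta + \bzeta .
\]
The terms $\bzeta$ and $\bzeta'$ are controlled directly by \eqref{interprojestimates}. For $\bzeta'$ we apply the same estimates to the differentiated projection, which is legitimate because \eqref{interproj} is linear and time-independent. The main work is therefore an energy estimate for $\btheta\in\bV_h$.

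\textbf{Error equation.} We subtract \eqref{vf}, tested with $\bv_h\in\bV_h$, from \eqref{dv}, and use the defining relation \eqref{interproj}. The elastic and viscous parts cancel exactly, giving
\[
m_h(\btheta'',\bv_h)+a_{1,h}(\btheta',\bv_h)+a_{2,h}(\btheta,\bv_h)
=(\bf_h-\bf,\bv_h)+\bigl[(\bu'',\bv_h)-m_h(\cP^h\bu'',\bv_h)\bigr].
\]
We split the bracket as $(\bu''-\bu''_\pi,\bv_h)+m_h(\bu''_\pi-\cP^h\bu'',\bv_h)$, where $\bu''_\pi$ is the piecewise $\mathbb{P}_k$ $L^2$-projection. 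This uses the polynomial consistency $m_h^K(\bw,\bv_h)=(\bw,\bv_h)_K$ for $\bw\in\mathbb{P}_k$. Continuity of $m_h$, the estimate \eqref{interprojerrl2} applied to $\bu''$, and standard polynomial approximation then give
\[
|\text{RHS}|\le C h^{k+1}\bigl(\|\bf\|_{k+1}+\|\bu''\|_{k+1}\bigr)\|\bv_h\|.
\]
Here we use $\|\bf-\bf_h\|\le Ch^{k+1}|\bf|_{k+1}$, which follows from the definition $\bf_h|_K=\Pi^{0,K}\bf$.

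\textbf{Energy argument.} We test with $\bv_h=\btheta'$, exactly as in the stability theorem. Coercivity of $m_h$, $a_{1,h}$ and $a_{2,h}$ yields
\[
\tfrac12\tfrac{\mathrm d}{\dt}\bigl(m_h(\btheta',\btheta')+a_{2,h}(\btheta,\btheta)\bigr)+\alpha_0|\btheta'|_{1,h}^2\le Ch^{k+1}R(t)\|\btheta'\| .
\]
We integrate in time and take the initial data compatible with the projection, so that $\btheta(0)=\bzero$ by $\cP^h\bu(0)=\bu_h(0)$. We also need $\btheta'(0)$ to be of order $h^{k+1}$ in $L^2$; this holds if $\bu_{h,1}$ is chosen as $\cP^h$-type data or an interpolant. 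We then argue either by Young's inequality plus Gronwall, or by the sup/division trick used in the stability theorem. Together these give
\[
\|\btheta'(t)\|+|\btheta(t)|_{1,h}\le Ch^{k+1}\bigl(\|\bf\|_{L^2(0,T;H^{k+1})}+\|\bu''\|_{L^2(0,T;H^{k+1})}\bigr).
\]

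\textbf{Conclusion and main obstacle.} The triangle inequality now finishes the proof. For the $H^1$ bound we combine $|\btheta|_{1,h}\le Ch^{k+1}(\cdots)$ with \eqref{interprojerrh1}, which gives order $h^k$ with the $\|\bu\|$ norm. For the velocity bound we combine $\|\btheta'\|$ with \eqref{interprojerrl2} applied to $\bu'$, which gives order $h^{k+1}$ with the $\|\bu'\|$ norm.

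The delicate step is justifying the estimates for $\bzeta'$ and $\cP^h\bu''$. Because \eqref{interproj} couples $\cP^h\bu'$ and $\cP^h\bu$ through a time ODE, $\cP^h$ is not a pointwise-in-time Ritz map. We must therefore check that differentiating \eqref{interproj} in time yields the same structure for $\bu'$ and $\bu''$, with compatible initial values, so that \cite[Lemma 3.4]{pradhan2024optimal} applies to these derivatives. We would verify this first. If the initial compatibility fails, we would instead bound $\btheta'(0)$ and $\btheta''(0)$ using the stated regularity and absorb the result via Gronwall.
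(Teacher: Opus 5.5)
Your proposal is correct and follows the paper's proof essentially step by step. Both arguments use the same splitting $\bu_h-\bu=\btheta+\bzeta$ through $\cP^h$, the same error equation whose residual is controlled by $\bf_h|_K=\Pi^{0,K}\bf$ and the polynomial consistency of $m_h$, the same energy test with $\bv_h=\btheta'$, and the same appeal to the time-differentiated projection for $\cP^h\bu''$ and $\bzeta'$. The differences are minor. The paper passes the residual to $|\bv_h|_{1,h}$ via Poincar\'e--Friedrichs and absorbs it by Young's inequality into the viscous term $a_{1,h}(\btheta',\btheta')$, rather than using Gronwall or the sup trick. The paper also silently takes $\btheta'(0)=\bzero$ by dropping $E_1(\btheta(0))$, and your explicit requirement $\|\btheta'(0)\|=O(h^{k+1})$ correctly makes that assumption visible.
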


\begin{proof} 
\rrb{We decompose the error as} $\bu_h-\bu = \btheta + \bzeta$, where $\btheta = \bu_h - \cP^h\bu$ and $\bzeta = \cP^h\bu - \bu$. \rrb{Since the estimates for $\bzeta$ are established in Lemma \ref{interprojestimates}, it remains to bound $\btheta$. Starting from \eqref{dv} and using \eqref{interproj}, we obtain}
\begin{align}
    m_h(\btheta'',\bv_h)+a_{1,h}(\btheta',\bv_h)+ a_{2,h}(\btheta,\bv_h) &= (\bf_h,\bv_h)-m_h(\cP^h \rrb{\bu}'',\bv_h)-a_{1,h}(\cP^h \bu',\bv_h)   -a_{2,h}(\cP^h \bu,\bv_h) \nonumber \\
    &= (\bf_h-\bf,\bv_h) + (\bu'',\bv_h) - m_h(\cP^h \bu'',\bv_h) \nonumber \\
    &= T_1 + T_2. \label{semidscrt_err_eqn}
\end{align}
\rrb{Bounding $T_1$ using the definition of $\bf_h$, we find}
\begin{align*}
    T_1 &= (\bf_h-\bf,\bv_h) = \sum_{K \in \mathcal{T}_h}(\Pi^{0,K}\bf-\bf,\bv_h) \\
    &\leq C\sum_{K \in \mathcal{T}_h}\|\Pi^{0,K}\bf-\bf\|_K\|\bv_h\|_K 
    \leq C h^{k+1}|\bf|_{k+1}|\bv_h|_{1,h}.
\end{align*}
\rrb{Differentiating \eqref{interproj} twice with respect to time and following the proof of Lemma \ref{interprojestimates}, we deduce that}
    \begin{equation*}
        \|\cP^h \bu'' -\bu''\| \leq Ch^{k+1}\|\bu''\|_{L^2(0,T;H^{k+1}(\Omega; \Rd))}.
    \end{equation*}
\rrb{The term $T_2$ can be bounded using the polynomial consistency of the bilinear form $m_h(\bullet,\bullet)$ as follows:}
\begin{align*}
    T_2 &= (\bu'',\bv_h) - m_h(\cP^h \bu'',\bv_h) \\
    &= \sum_{K \in \mathcal{T}_h} \left( (\bu''-\Pi^{0,K}\bu'',\bv_h)_K + m_h^K(\Pi^{0,K} \bu''-\cP^h \bu'',\bv_h) \right) \\
    &\leq C\sum_{K \in \mathcal{T}_h} \left( \|\bu''-\Pi^{0,K}\bu''\|_K + \|\Pi^{0,K} \bu''-\cP^h \bu''\|_K \right)\|\bv_h\|_K \\
    &\leq C\sum_{K \in \mathcal{T}_h} \left( \|\bu''-\Pi^{0,K}\bu''\|_K + \|\bu''-\cP^h \bu''\|_K \right)\|\bv_h\|_K, 
\end{align*}
and this yields
\begin{equation*}
    T_2 \leq Ch^{k+1}\|\bu''\|_{L^2(0,T;H^{k+1}(\Omega;\Rd))}|\bv_h|_{1,h},
\end{equation*}
\rrb{where the final inequalities for $T_1$ and $T_2$ utilize the Poincar\'e--Friedrichs inequality (see \cite[Lemma 2.1]{gss}). Combining these estimates with \eqref{semidscrt_err_eqn} yields}
$$m_h(\btheta'',\bv_h)+a_{1,h}(\btheta',\bv_h)+ a_{2,h}(\btheta,\bv_h)\le B_{\bf,h,\bu}|\bv_h|_{1,h},$$
where $\rrb{B_{\bf,h,\bu}}:=Ch^{k+1}\left(|\bf|_{k+1} + \|\bu''\|_{L^2(0,T;H^{k+1}(\Omega;\Rd))} \right)$. 

\rrb{Substituting $\bv_h=\btheta'$ into the above inequality gives} 
$$
m_h(\btheta'',\btheta')+a_{1,h}(\btheta',\btheta')+a_{2,h}(\btheta,\btheta')\le B_{\bf,h,\bu}|\btheta'|_{1,h},
$$
\rrb{Defining $E_1(\btheta)=\frac{1}{2}\left( m_h(\btheta',\btheta')+a_{2,h}(\btheta,\btheta) \right)$, this can be rewritten as}
$$ \frac{\text{d}}{\text{d}t} E_1(\btheta) +a_{1,h}(\btheta',\btheta')\le B_{\bf,h,\bu}|\btheta'|_{1,h}.$$
\rrb{Applying Young's inequality, we obtain}
$$ \frac{\text{d}}{\text{d}t} E_1(\btheta) +a_{1,h}(\btheta',\btheta')\le \frac{B_{\bf,h,\bu}^2}{2 \alpha_0}+\frac{ \alpha_0}{2}|\btheta'|_{1,h}^2. $$
\rrb{Furthermore, using the coercivity of the bilinear form $a_{1,h}(\bullet,\bullet)$, we deduce}
$$\frac{\text{d}}{\text{d}t} E_1(\btheta) + \alpha_0|\btheta'|_{1,h}^2\le \frac{B_{\bf,h,\bu}^2}{2 \alpha_0}+\frac{ \alpha_0}{2}|\btheta'|_{1,h}^2,  $$
\rrb{and subtracting $\frac{ \alpha_0}{2}|\btheta'|_{1,h}^2$ from both sides leads to} 
$$
 \frac{\text{d}}{\text{d}t} E_1(\btheta) +\frac{ \alpha_0}{2}|\btheta'|_{1,h}^2\le \frac{B_{\bf,h,\bu}^2}{2 \alpha_0}.  $$
\rrb{Multiplying by $2$ and integrating in time from $0$ to $t$, we can assert that} 
$$
2 E_1(\btheta(t)) + \alpha_0\int_0^t |\btheta'(s)|_{1,h}^2\ds \le \frac{1}{ \alpha_0}\int_0^t B_{\bf,h,\bu}^2(s)\ds.
$$
From the definition of $B_{\bf,h,\bu}$ and the coercivity of the bilinear forms, the above \rrb{bound implies}
\begin{equation*}
    \|\btheta'(t)\|^2 + |\btheta(t)|_{1,h}^2 \le Ch^{2(k+1)}\left(\|\bf\|_{L^2(0,T;H^{k+1}(\Omega;\Rd))}^2 + \|\bu''\|_{L^2(0,T;H^{k+1}(\Omega;\Rd))}^2 \right).
\end{equation*}
Using the estimates for $\bzeta$ given in Lemma \ref{interprojestimates} together with the estimates of $\btheta$ in the last displayed inequality, we obtain
\begin{equation*}
    |\bu_h-\bu|_{1,h} \le Ch^k\left( \|\bf\|_{L^2(0,T;H^{k+1}(\Omega;\Rd))} + \|\bu\|_{L^2(0,T;H^{k+1}(\Omega;\Rd))} + \|\bu''\|_{L^2(0,T;H^{k+1}(\Omega;\Rd))} \right),
\end{equation*}
and
\begin{equation*}
    \|\bu_h'-\bu'\| \le Ch^{k+1}\left( \|\bf\|_{L^2(0,T;H^{k+1}(\Omega;\Rd))} + \|\bu'\|_{L^2(0,T;H^{k+1}(\Omega;\Rd))} + \|\bu''\|_{L^2(0,T;H^{k+1}(\Omega;\Rd))} \right),
\end{equation*}
which gives the desired result.  
\end{proof}


We next derive the optimal error estimates of the displacement in the $L^2$-norm. The results are presented in the following theorem:

\begin{theorem}\label{err_est_thm_l2}
    Let $\bu_h$ and $\bu$ be the solutions of problems \eqref{dv} and \eqref{vf}, respectively. Assume that $\bu,\bu''$\rrb{,} and $\bf \in L^2(0,T;H^{k+1}(\Omega; \Rd))$. Then, the following bound holds:
    \begin{equation*}
    \|\bu_h-\bu\| \le Ch^{k+1}\left(\|\bf\|_{L^2(0,T;H^{k+1}(\Omega; \Rd))} + \|\bu\|_{L^2(0,T;H^{k+1}(\Omega; \Rd))} + \|\bu''\|_{L^2(0,T;H^{k+1}(\Omega; \Rd))} \right).
\end{equation*} 
\end{theorem}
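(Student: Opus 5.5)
We reuse the splitting from the proof of Theorem \ref{thm_semidscrt_err_est}: $\bu_h-\bu=\btheta+\bzeta$ with $\btheta=\bu_h-\cP^h\bu$ and $\bzeta=\cP^h\bu-\bu$. The $L^2$-bound \eqref{interprojerrl2} already gives $\|\bzeta\|\le Ch^{k+1}\|\bu\|_{L^2(0,T;H^{k+1}(\Omega;\Rd))}$. It therefore suffices to show that $\|\btheta(t)\|$ is of order $h^{k+1}$, and we will get this from an energy estimate on the error equation \eqref{semidscrt_err_eqn}.

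The route we plan to take is the one already used for the $H^1$ bound. There we proved
\begin{equation*}
\|\btheta'(t)\|^2+|\btheta(t)|_{1,h}^2\le Ch^{2(k+1)}\bigl(\|\bf\|_{L^2(0,T;H^{k+1}(\Omega;\Rd))}^2+\|\bu''\|_{L^2(0,T;H^{k+1}(\Omega;\Rd))}^2\bigr),
\end{equation*}
and this is superconvergent, since it has order $h^{k+1}$ rather than $h^k$. We need to pass from this bound to $\|\btheta(t)\|$, and there are two ways to do it.
\begin{itemize}
\item[(a)] Use the Poincar\'e--Friedrichs inequality of \cite[Lemma 2.1]{gss}, $\|\btheta\|\le C|\btheta|_{1,h}$. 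This is legitimate because $\btheta\in\bV_h\subset H_0^1(\Omega;\Rd)$.
\item[(b)] Write $\btheta(t)=\btheta(0)+\int_0^t\btheta'(s)\ds$. The initial condition $\cP^h\bu(0)=\bu_h(0)$ gives $\btheta(0)=\bzero$, so $\|\btheta(t)\|\le \sqrt{T}\bigl(\int_0^t\|\btheta'\|^2\ds\bigr)^{1/2}$, and the bound above controls the right-hand side.
\end{itemize}
Either way we get $\|\btheta(t)\|\le Ch^{k+1}(\|\bf\|+\|\bu''\|)$, with the norms taken in $L^2(0,T;H^{k+1}(\Omega;\Rd))$. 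Adding the $\bzeta$ bound then yields the claimed estimate, whose right-hand side contains exactly $\|\bf\|$, $\|\bu\|$ and $\|\bu''\|$ in that norm.

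The main obstacle is to make sure the superconvergent bound on $\btheta$ is genuinely justified. It depends on the right-hand side $T_1+T_2$ of \eqref{semidscrt_err_eqn} being of order $h^{k+1}$ when tested against $|\bv_h|_{1,h}$. We need three facts for this.
\begin{itemize}
\item[(i)] The term $T_1$: this follows from the $L^2$ approximation property of $\Pi^{0,K}$ together with Poincar\'e.
\item[(ii)] The bound $\|\cP^h\bu''-\bu''\|\le Ch^{k+1}$. We obtain it by differentiating \eqref{interproj} twice in time and repeating the duality (Aubin--Nitsche) argument of \cite[Lemma 3.4]{pradhan2024optimal}.
\item[(iii)] The initial error $\btheta(0)$ and $\btheta'(0)$ must be controlled consistently. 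Here $\btheta(0)=\bzero$ by construction. For $\btheta'(0)=\bu_{h,1}-\cP^h\bu'(0)$, we choose $\bu_{h,1}:=\cP^h\bu'(0)$, or otherwise assume an $O(h^{k+1})$ $L^2$ approximation of $\bu_1$.
\end{itemize}
If (iii) is not available, we would fall back to route (b) together with an $m_h$-weighted Gr\"onwall argument. We would test \eqref{semidscrt_err_eqn} with $\bv_h=\btheta'$, integrate in time, and handle the initial terms through the $L^2$-stability of $m_h$. This still keeps the rate at $h^{k+1}$.
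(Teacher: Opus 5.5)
Your argument is correct, but it follows a different route from the paper's.

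\textbf{The paper's route.} The paper does not reuse the energy estimate of Theorem~\ref{thm_semidscrt_err_est}. It writes $m_h(\btheta'',\bv_h)=\frac{\mathrm{d}}{\dt}m_h(\btheta',\bv_h)-m_h(\btheta',\bv_h')$ and tests the error equation with the time-integrated function $\bar{\btheta}(t)=\int_t^\xi\btheta(s)\ds$, where $\xi$ maximizes $\|\btheta(\cdot)\|$ on $[0,T]$. This is a Baker-type duality-in-time argument. It then integrates over $[0,\xi]$, using $\bar{\btheta}(\xi)=\bzero$ and $\btheta(0)=\btheta'(0)=\bzero$, and arrives at $\|\btheta(\xi)\|^2\le Ch^{k+1}(\cdots)\|\btheta(\xi)\|$.

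\textbf{Your route.} You notice that the energy bound proved for Theorem~\ref{thm_semidscrt_err_est} already gives $|\btheta(t)|_{1,h}+\|\btheta'(t)\|\le Ch^{k+1}(\cdots)$. Then Poincar\'e--Friedrichs in the conforming space $\bV_h\subset H_0^1(\Omega;\Rd)$, or integration of $\btheta'$ from $\btheta(0)=\bzero$, closes the estimate in one line.

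\textbf{What each buys.} Your route is shorter, and it shows that within the paper the $L^2$ displacement bound is a corollary of the preceding theorem. It uses exactly the ingredients the paper relies on anyway: the $O(h^{k+1})$ bounds on $T_1$ and $T_2$, the bound on $\|\cP^h\bu''-\bu''\|$, and vanishing initial errors. Your remark (iii) rightly points out that $\btheta'(0)=\bzero$ requires a specific choice of $\bu_{h,1}$; the paper uses this without comment.

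In its general form, the duality route works at the $L^2$ level of $\btheta$ without passing through $\|\btheta'\|$ or $|\btheta|_{1,h}$. This has two consequences.
\begin{itemize}
\item A nonzero initial velocity error enters only through $2m_h(\btheta'(0),\bar{\btheta}(0))$. That term can be absorbed by $a_{2,h}(\bar{\btheta}(0),\bar{\btheta}(0))$ on the left, so only dual-norm smallness of $\btheta'(0)$ is required.
\item The consistency term can be integrated by parts in time, so only an approximation bound for $\bu'$, rather than $\bu''$, is needed.
\end{itemize}
As written, the paper exploits neither advantage, so the two routes give the same bound under the same hypotheses.

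\textbf{Caveat on your fallback.} The fallback in your final paragraph does not work as stated. If $\|\btheta'(0)\|$ is not $O(h^{k+1})$, then $E_1(\btheta(0))$ stays on the right-hand side of the integrated energy inequality, and no Gr\"onwall step removes it. Route (b) would therefore lose the rate. That is precisely the situation in which the paper's test function $\bar{\btheta}$ is the appropriate tool.
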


\begin{proof}
\rrb{We begin with the error equation:} 
\begin{align*}
    m_h(\btheta'',\bv_h) + a_{1,h}(\btheta',\bv_h) + a_{2,h}(\btheta,\bv_h) = (\bf_h-\bf,\bv_h)+ (\bu'',\bv_h) - m_h(\cP^h \bu'',\bv_h).
\end{align*}
We can rewrite the above error equation as:
\begin{equation*}
        -m_h(\btheta',\bv_h') + a_{1,h}(\btheta',\bv_h) + a_{2,h}(\btheta,\bv_h) = (\bf_h-\bf,\bv_h)+ (\bu'',\bv_h) - m_h(\cP^h \bu'',\bv_h)  -\frac{\mathrm{d}}{\dt}m_h(\btheta',\bv_h).
\end{equation*}
\rrb{Defining $\bar{\btheta}(t) = \int_t^\xi \btheta(s)\ds$ for $\xi \in [0,T]$ and substituting $\bv_h = \bar{\btheta}$ into the above equation, we obtain}
\begin{equation*}
        -m_h(\btheta',\bar{\btheta}') + a_{1,h}(\btheta',\bar{\btheta}) + a_{2,h}(\btheta,\bar{\btheta}) = (\bf_h-\bf,\bar{\btheta})+ (\bu'',\bar{\btheta}) - m_h(\cP^h \bu'',\bar{\btheta}) 
        -\frac{\mathrm{d}}{\dt}m_h(\btheta',\bar{\btheta}),
\end{equation*}
\rrb{By Leibniz's rule, $\bar{\btheta}'(t) = -\btheta(t)$. Employing this, the relation reduces to} 
\begin{equation*}
    \begin{split}
        \frac{1}{2}\frac{\mathrm{d}}{\dt}m_h(\btheta,\btheta) + \frac{\mathrm{d}}{\dt}a_{1,h}(\btheta,\bar{\btheta}) + a_{1,h}(\btheta,\btheta) - \frac{1}{2}\frac{\mathrm{d}}{\dt}a_{2,h}(\bar{\btheta},\bar{\btheta}) &= (\bf_h-\bf,\bar{\btheta})+ (\bu'',\bar{\btheta})  \\
        &~~- m_h(\cP^h \bu'',\bar{\btheta})-\frac{\mathrm{d}}{\dt}m_h(\btheta',\bar{\btheta}).
    \end{split}
\end{equation*} 
\rrb{Furthermore, using the bounds for $T_1$ and $T_2$ from Theorem \ref{thm_semidscrt_err_est}, we deduce that}
\begin{align*}
&        \frac{\mathrm{d}}{\dt}m_h(\btheta,\btheta) + 2\frac{\mathrm{d}}{\dt}a_{1,h}(\btheta,\bar{\btheta}) + 2a_{1,h}(\btheta,\btheta) - \frac{\mathrm{d}}{\dt}a_{2,h}(\bar{\btheta},\bar{\btheta}) \\
& \qquad \le Ch^{k+1}\left(|\bf|_{k+1} + |\bu''|_{k+1} 
    + \|\bu''\|_{L^2(0,T;H^{k+1}(\Omega;\Rd))} \right)\|\bar{\btheta}\| - 2\frac{\mathrm{d}}{\dt}m_h(\btheta',\bar{\btheta}).
    \end{align*} 
\rrb{Next, choosing $\xi \in [0,T]$ such that $\displaystyle \|\btheta(\xi)\| = \max_{0\le t \le T}\|\btheta(t)\|$, we have $\|\bar{\btheta}(t)\| \le C\|\btheta(\xi)\|$. Integrating both sides with respect to time from $0$ to $\xi$ and using $\bar{\btheta}(\xi) = \bzero$ along with $\btheta(0) = \btheta'(0) = \bzero$, we arrive at}  
\begin{align*}
    &    m_h(\btheta(\xi),\btheta(\xi)) + a_{2,h}(\bar{\btheta}(0),\bar{\btheta}(0)) + 2\int_0^{\xi}a_{1,h}(\btheta({s}),\btheta({s}))\ds \\
    & \qquad \qquad \le Ch^{k+1}\left(\|\bf\|_{L^2(0,T;H^{k+1}(\Omega;\Rd))} + \|\bu''\|_{L^2(0,T;H^{k+1}(\Omega;\Rd))} \right)\|\btheta(\xi)\|.
    \end{align*} 
\rrb{By the coercivity of $m_h(\bullet,\bullet)$, the above inequality yields}
\begin{equation*}
    \|\btheta(\xi)\| \le Ch^{k+1}\left(\|\bf\|_{L^2(0,T;H^{k+1}(\Omega;\Rd))} + \|\bu''\|_{L^2(0,T;H^{k+1}(\Omega;\Rd))} \right).
\end{equation*} 
\rrb{This, in turn, implies}
\begin{equation*}
    \|\btheta(t)\| \le Ch^{k+1}\left(\|\bf\|_{L^2(0,T;H^{k+1}(\Omega;\Rd))} + \|\bu''\|_{L^2(0,T;H^{k+1}(\Omega;\Rd))} \right).
\end{equation*} 
\rrb{Combining \eqref{interprojerrl2} with the above estimate concludes the proof.} 
\end{proof}

\section{Fully discrete case}\label{sec:fully-discr}
\rrb{In this section, we reformulate} the model problem \eqref{model_prblm} as a system of first-order partial differential equations and present the fully discrete problem corresponding to it, obtained by employing the Crank--Nicolson scheme for temporal discretization. \rrb{Letting $\bv = \bu'$, problem \eqref{model_prblm} reduces to finding $\bu$ and $\bv$ such that}
\begin{equation}\label{pde_system}
    \begin{split}
        \bu' &= \bv, \quad  \text{in} \ \Omega \times (0,T], \\
        \rho \bv' - \bnabla \cdot (A_1 \bepsilon(\bv) + A_2 \bepsilon(\bu)) &= \bf, \quad \text{in} \ \Omega \times (0,T],
    \end{split}
\end{equation}
with the initial conditions $\bu(0) = \bu_0$ \rrb{and} $\bv(0) = \bu_1$.

\rrb{Let $N$ be a positive integer, denote the time-step by $\Delta t = T/N$, and define the temporal nodes as $t_n = n\Delta t$ for $n = 0,1,\ldots, N$.} For a generic scalar or vector field $s$, \rrb{we} let $s^n$ denote \rrb{$s(t_n)$} and $s^{n+\frac{1}{2}}$ denote $\frac{s^n+s^{n+1}}{2}$. \rrb{Furthermore, we define the discrete time derivative as} $\partial s^n \rrb{:=} \frac{s^{n+1}-s^n}{\Delta t}$. 
\rrb{The fully discrete Crank--Nicolson scheme applied to \eqref{pde_system} then consists of finding $\underline{\bu}_h^n, \underline{\bv}_h^n \in \bV_h$ for all $n=0,1,\ldots,N$ such that $\underline{\bu}_h^0 = \bu_{h,0}$ and $\underline{\bv}_h^0 = \bu_{h,1}$, and for all $n = 0, \dots, N-1$, there holds:}  
\begin{equation}\label{fd}
\begin{split}
&\partial \underline{\bu}_h^n=\underline{\bv}_h^{n+\frac{1}{2}},\\
&m_h(\partial \underline{\bv}_h^n,\bw_h)+ a_{1,h}(\partial \underline{\bu}_h^n,\bw_h)+a_{2,h}(\underline{\bu}_h^{n+\frac{1}{2}},\bw_h)=(\bf_h^{n+\frac{1}{2}},\bw_h)\rrb{,\quad \forall \ \bw_h\in \bV_h.}
\end{split}
\end{equation}

We now \rrb{derive} the error estimates for the fully discrete problem. 
\begin{theorem}\label{th:fully}
Let \rrb{$\bu_h^n := \bu_h(t_n)$} and $\underline{\bu}_h^n$ be the solutions to \rrb{problems} \eqref{dv} and \eqref{fd}, respectively. Assume that $\bu,\bu',\bu'' \in L^2(0,T;H^{k+1}(\Omega;\Rd))$ \rrb{and} $\bu_0,\bu_1 \in H^{k+1}(\Omega;\Rd) \cap H^6(\Omega;\Rd)$\rrb{, and suppose} further that $\bf \in L^2(0,T;H^{k+1}(\Omega;\Rd))\cap H^3(0,T;H^{4}(\Omega;\Rd))$. Then, it holds that
\begin{align*}
    \max_{0\le n\le N} (\|\bu'^n-\underline{\bv}_h^n\|+h|\bu^n-\underline{\bu}_h^n|_{1,h}) & \le Ch^{k+1}\left( \|\bf\|_{L^2(0,T;H^{k+1}(\Omega;\Rd))} + \|\bu\|_{L^2(0,T;H^{k+1}(\Omega;\Rd))}\right. \\
    &~~\left.  + \|\bu'\|_{L^2(0,T;H^{k+1}(\Omega;\Rd))}  + \|\bu''\|_{L^2(0,T;H^{k+1}(\Omega;\Rd))} \right) \\
    &~~+C \Delta t^2 \left( |\bu_0|_6 + |\bu_1|_6 + \|\bf\|_{H^3(0,T;H^4(\Omega;\Rd))} \right).
\end{align*}
\end{theorem}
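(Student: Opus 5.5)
We split the total error at $t_n$ through the semi-discrete solution:
\[
\bu^n-\underline{\bu}_h^n=(\bu^n-\bu_h^n)+(\bu_h^n-\underline{\bu}_h^n),\qquad
\bu'^n-\underline{\bv}_h^n=(\bu'^n-\bu_h'^n)+(\bu_h'^n-\underline{\bv}_h^n).
\]
The first (spatial) parts are bounded by Theorem~\ref{thm_semidscrt_err_est}. It gives $|\bu^n-\bu_h^n|_{1,h}\le Ch^k(\cdots)$, which becomes $Ch^{k+1}(\cdots)$ after multiplication by $h$, and $\|\bu'^n-\bu_h'^n\|\le Ch^{k+1}(\cdots)$. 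This produces exactly the $h^{k+1}$ group on the right-hand side. It therefore remains to show that the time-discretization errors are $O(\Delta t^2)$, uniformly in $h$. We set $\be^n:=\bu_h^n-\underline{\bu}_h^n$ and $\bdelta^n:=\bu_h'^n-\underline{\bv}_h^n$; both vanish at $n=0$ by the choice of initial data.

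\emph{Truncation errors.} Evaluating \eqref{dv} at $t_n$ and $t_{n+1}$ and averaging gives
\[
\partial\bu_h^n=\bu_h'^{\,n+\frac12}+\bsigma_1^n,\qquad
m_h(\partial\bu_h'^n,\bw_h)+a_{1,h}(\partial\bu_h^n,\bw_h)+a_{2,h}(\bu_h^{n+\frac12},\bw_h)=(\bf_h^{n+\frac12},\bw_h)+(\bsigma_2^n,\bw_h)_{*},
\]
where
\[
\bsigma_1^n=\partial\bu_h^n-\tfrac12(\bu_h'^{n}+\bu_h'^{n+1}),
\]
and $\bsigma_2^n$ collects $m_h(\partial\bu_h'^n-\bu_h''^{\,n+\frac12},\cdot)$ and $a_{1,h}(\partial\bu_h^n-\bu_h'^{\,n+\frac12},\cdot)$. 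By the trapezoidal-rule Taylor remainder,
\[
\|\bsigma_1^n\|+|\bsigma_1^n|_{1,h}\le C\Delta t\int_{t_n}^{t_{n+1}}\bigl(\|\bu_h'''\|+|\bu_h'''|_{1,h}\bigr)\ds .
\]
The analogous bound holds for $\bsigma_2$, now involving $\bu_h'''$ and $\bu_h^{(4)}$ and measured in $\|\cdot\|$ and in the discrete dual norm via $|\cdot|_{1,h}$. Subtracting \eqref{fd} yields the error system
\[
\partial\be^n=\bdelta^{n+\frac12}+\bsigma_1^n,\qquad
m_h(\partial\bdelta^n,\bw_h)+a_{1,h}(\partial\be^n,\bw_h)+a_{2,h}(\be^{n+\frac12},\bw_h)=\langle\bsigma_2^n,\bw_h\rangle .
\]

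\emph{Discrete energy argument.} We test with $\bw_h=\bdelta^{n+\frac12}=\partial\be^n-\bsigma_1^n$. Then $m_h(\partial\bdelta^n,\bdelta^{n+\frac12})$ telescopes to $\frac{1}{2\Delta t}\bigl(m_h(\bdelta^{n+1},\bdelta^{n+1})-m_h(\bdelta^n,\bdelta^n)\bigr)$, and $a_{2,h}(\be^{n+\frac12},\partial\be^n)$ telescopes likewise. The remaining $a_{2,h}$ term $a_{2,h}(\be^{n+\frac12},\bsigma_1^n)$ is bounded by $|\be|_{1,h}|\bsigma_1^n|_{1,h}$. The dissipative term $a_{1,h}(\partial\be^n,\partial\be^n-\bsigma_1^n)\ge\alpha_0|\partial\be^n|_{1,h}^2-C|\partial\be^n|_{1,h}|\bsigma_1^n|_{1,h}$ absorbs the $|\cdot|_{1,h}$ part of $\bsigma_2$ by Young's inequality. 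We then sum over $n$ and apply the discrete Gronwall lemma. With the coercivity of $m_h$ and $a_{2,h}$ this gives
\[
\max_n\bigl(\|\bdelta^n\|+|\be^n|_{1,h}\bigr)\le C\Delta t^2\Bigl(\|\bu_h'''\|_{L^2(0,T;H^1)}+\|\bu_h^{(4)}\|_{L^2(0,T;L^2)}\Bigr).
\]

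\emph{Main obstacle.} The crux is bounding the higher time derivatives of $\bu_h$ uniformly in $h$ by data. Here we invoke Lemma~\ref{lemma_stab_est_der} with $k=3$ and $k=4$. It gives $\|\bu_h^{(4)}(t)\|^2+\int_0^t|\bu_h^{(4)}|_{1,h}^2$, and also $\int_0^t|\bu_h'''|_{1,h}^2$, bounded by $|\bu_0|_6^2+|\bu_1|_6^2+\|\bf\|_{H^3(0,T;H^4)}^2$. This is precisely why $H^6$ initial data and $\bf\in H^3(0,T;H^4)$ are assumed. The lemma requires the compatible discrete initial data to be built from $\bu_0,\bu_1$, which we take as given by the cited reference. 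Care is needed to put $\bsigma_2$ into the $|\cdot|_{1,h}$-dual form so that only $\int|\bu_h^{(4)}|_{1,h}^2$-type quantities appear. Finally, the triangle inequality combines the temporal bound, which also dominates $h|\be^n|_{1,h}$, with the spatial estimates, concluding the proof.
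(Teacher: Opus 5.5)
Your proposal is correct and follows essentially the same route as the paper: the same splitting through $\bu_h(t_n)$, the same Crank--Nicolson truncation terms (your $\bsigma_1^n$ is the paper's $\bphi^n$, and the $m_h$-part of your $\bsigma_2^n$ is $\bpsi^n$), testing the error equation with the midpoint velocity error, and Lemma~\ref{lemma_stab_est_der} with $k=3,4$ to control $\bu_h'''$ and $\bu_h''''$. The differences are cosmetic. Your $a_{1,h}(\bsigma_1^n,\cdot)$ piece recombines exactly with $a_{1,h}(\partial\boldsymbol{e}^n,\cdot)$ into $a_{1,h}(\boldsymbol{\delta}^{n+\frac12},\boldsymbol{\delta}^{n+\frac12})$, which is how the paper writes it, so no Young absorption is needed there; and you close with discrete Gronwall, whereas the paper bounds the sum by $T\max_n(\cdot)$ and absorbs it using a small Young constant.
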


\begin{proof}
Define $\bxi^n=\bu_h^n-\underline{\bu}_h^n$, \rrb{$\bbeta^n$}$=\bu_h'^n-\underline{\bv}_h^n$, $\bpsi^n=\partial \bu_h'^n-\bu_h''^{n+\frac{1}{2}}$\rrb{,} and $\bphi^n=\partial \bu_h^n-\bu_h'^{n+\frac{1}{2}}$. \rrb{Subtracting the fully discrete equation \eqref{fd} from the semi-discrete equation \eqref{dv} evaluated at $t = t_{n+1/2}$ yields the following for all $\bv_h\in \bV_h$ and $n=0,\dots,N-1$:}
$$ m_h(\partial\bbeta^n,\bv_h)+a_{1,h}(\bbeta^{n+\frac{1}{2}},\bv_h)+a_{2,h}(\bxi^{n+\frac{1}{2}},\bv_h)=m_h(\bpsi^n,\bv_h).$$
\rrb{Substituting $\bv_h=\bbeta^{n+\frac{1}{2}}$ and} noting that $\bbeta^{n+\frac{1}{2}}=\partial \bxi^n-\bphi^n$, we obtain
$$ m_h(\partial\bbeta^n,\bbeta^{n+\frac{1}{2}})+a_{1,h}(\bbeta^{n+\frac{1}{2}},\bbeta^{n+\frac{1}{2}})+a_{2,h}(\bxi^{n+\frac{1}{2}},\partial \bxi^n-\bphi^n)=m_h(\bpsi^n,\bbeta^{n+\frac{1}{2}})\rrb{,}$$
\rrb{and adding} $a_{2,h}(\bxi^{n+\frac{1}{2}},\bphi^n)$ to both sides of the above equation \rrb{yields}
$$ m_h(\partial\bbeta^n,\bbeta^{n+\frac{1}{2}})+a_{1,h}(\bbeta^{n+\frac{1}{2}},\bbeta^{n+\frac{1}{2}})+a_{2,h}(\bxi^{n+\frac{1}{2}},\partial \bxi^n)=m_h(\bpsi^n,\bbeta^{n+\frac{1}{2}})+a_{2,h}(\bxi^{n+\frac{1}{2}},\bphi^n).$$
\rrb{Defining} $E_2^n=\frac{1}{2}\left(m_h(\bbeta^n,\bbeta^n)+a_{2,h}(\bxi^n,\bxi^n)\right)$, we get the following \rrb{relations}:
\begin{align*}
\partial E_2^n+a_{1,h}(\bbeta^{n+\frac{1}{2}},\bbeta^{n+\frac{1}{2}}) & =  m_h(\bpsi^n,\bbeta^{n+\frac{1}{2}})+a_{2,h}(\bxi^{n+\frac{1}{2}},\bphi^n),
\\ & \le \|\bpsi^n\|\|\bbeta^{n+\frac{1}{2}}\|+|\bphi^n|_{1,h}|\bxi^{n+\frac{1}{2}}|_{1,h},
\\ & \le \rrb{C_1}\left(\|\bpsi^n\|^2+|\bphi^n|_{1,h}^2\right)+\rrb{C_2}\left(\|\bbeta^{n+\frac{1}{2}}\|^2+|\bxi^{n+\frac{1}{2}}|_{1,h}^2\right)\rrb{,}
\end{align*}
\rrb{where} the last inequality \rrb{follows from} Young's inequality. \rrb{Multiplying} by $\Delta t$ and \rrb{summing over} $n=0,1,\dots,l-1$ for $l\le N$ \rrb{gives}:
\begin{align*}
E_2^l-E_2^0  & \le \Delta t \sum_{n=0}^{l-1} C_1(\|\bpsi^n\|^2+|\bphi^n|_{1,h}^2)+\Delta t \sum_{n=0}^{l-1} C_2(\|\bbeta^{n+\frac{1}{2}}\|^2+|\bxi^{n+\frac{1}{2}}|_{1,h}^2),
\\ & \le \Delta t \sum_{n=0}^{N-1} C_1(\|\bpsi^n\|^2+|\bphi^n|_{1,h}^2)+\Delta t \max_{0\le n\le N} N C_2(\|\bbeta^n\|^2+|\bxi^n|_{1,h}^2),
\\ & = \Delta t \sum_{n=0}^{N-1} C_1(\|\bpsi^n\|^2+|\bphi^n|_{1,h}^2)+T \max_{0\le n\le N}  C_2(\|\bbeta^n\|^2+|\bxi^n|_{1,h}^2).
\end{align*}
Noting that $E_2^0=0$\rrb{, and} using the definition of $E_2^n$ \rrb{along with} the coercivity of the bilinear forms \rrb{and a suitable choice of the constants} $C_1$ and $C_2$, we \rrb{deduce that}
$$
\max_{0\le n\le N} (\|\bbeta^n\|^2+|\bxi^n|_{1,h}^2) \le C \Delta t \sum_{n=0}^{N-1} (\|\bpsi^n\|^2+|\bphi^n|_{1,h}^2).
$$
\rrb{Using} standard approximation estimates on the right-hand side\rrb{, we find}
\begin{align*}
\max_{0\le n\le N} (\|\bbeta^n\|^2+|\bxi^n|_{1,h}^2) & \le C \Delta t^4 \sum_{n=0}^{N-1} \int_{\rrb{t_n}}^{\rrb{t_{n+1}}} (\|\bu_h''''(t)\|^2+| \bu_h'''(t)|_{1,h}^2)\dt.
\end{align*}
\rrb{Combining} the integrals \rrb{over $[0,T]$} and making use of the estimates \rrb{from} Theorem \ref{thm_semidscrt_err_est}, we obtain
\begin{align*}
    \max_{0\le n\le N} (\|\bu'^n-\underline{\bv}_h^n\|+h|\bu^n-\underline{\bu}_h^n|_{1,h}) & \le Ch^{k+1}\left( \|\bf\|_{L^2(0,T;H^{k+1}(\Omega;\Rd))} + \|\bu\|_{L^2(0,T;H^{k+1}(\Omega;\Rd))}\right. \\
    &~~\left.  + \|\bu'\|_{L^2(0,T;H^{k+1}(\Omega;\Rd))}  + \|\bu''\|_{L^2(0,T;H^{k+1}(\Omega;\Rd))} \right) \\
    &~~+C \Delta t^2 \int_{0}^{T} (\|\bu_h''''(t)\|+| \bu_h'''(t)|_{1,h})\dt,
\end{align*}
and \rrb{applying Lemma \ref{lemma_stab_est_der} then yields} the desired bound. 
\end{proof}

\section{Numerical experiments}\label{sec:results}
This section illustrates the accuracy and performance of the proposed fully-discrete scheme  through some numerical tests.  We show the optimal behavior of the VEM under different polytopal meshes and time steps for the Crank--Nicholson scheme. Finally, we simulate an application-oriented problem.  

We start by defining the polynomial projection operator onto symmetric polynomial tensors \rrb{needed in 2D}. Given $\bv_h \in \mathbf{V}_h$ and $K \in \mathcal{T}_h$, the strain projection operator $\Pi^{\boldsymbol{\epsilon},K} : \mathbf{V}^K_h \to \boldsymbol{\epsilon}\left([\mathbb{P}_{k}(K)]^2\right)$ is defined by:
\begin{equation*}
\int_K \Pi^{\boldsymbol{\epsilon},K}(\bv_h) : \mathbb{q}_{k-1} = \int_K \boldsymbol{\epsilon}(\bv_h) : \mathbb{q}_{k-1} \quad \forall \mathbb{q}_{k-1} \in \boldsymbol{\epsilon}\left([\mathbb{P}_{k}(K)]^2\right).
\end{equation*}
To establish the connection between the elliptic projection $\mathbf{\Pi}^{\boldsymbol{\nabla},K}$ and the strain projection $\Pi^{\boldsymbol{\epsilon},K}$, we recall the definition of $\mathbf{\Pi}^{\boldsymbol{\nabla},K} \bv_h \in [\mathbb{P}_k(K)]^2$:
\begin{align*}
\int_K \boldsymbol{\epsilon} (\mathbf{\Pi}^{\boldsymbol{\nabla},K} \bv_h) : \boldsymbol{\epsilon}(\mathbf{q}_k) &= \int_K \boldsymbol{\epsilon}(\bv_h) : \boldsymbol{\epsilon}(\mathbf{q}_k) \quad \forall \mathbf{q}_k \in [\mathbb{P}_k(K)]^2,\\
j_K(\mathbf{\Pi}^{\boldsymbol{\nabla},K} \bv_h) &= j_K(\bv_h).
\end{align*}
Notice that any symmetric tensor polynomial $\mathbb{q}_{k-1} \in \boldsymbol{\epsilon}\left([\mathbb{P}_{k}(K)]^2\right)$ can be represented as $\mathbb{q}_{k-1} = \boldsymbol{\epsilon}(\mathbf{q}_k)$ for some $\mathbf{q}_k \in [\mathbb{P}_k(K)]^2$. Restricted to these tensors, the orthogonality relation reduces to:
\begin{equation*}
\int_K \boldsymbol{\epsilon}(\mathbf{\Pi}^{\boldsymbol{\nabla},K} \bv_h) : \mathbb{q}_{k-1} = \int_K \boldsymbol{\epsilon}(\bv_h) : \mathbb{q}_{k-1} \quad \forall \mathbb{q}_{k-1} \in \boldsymbol{\epsilon}\left([\mathbb{P}_{k}(K)]^2\right).
\end{equation*}
Because $\boldsymbol{\epsilon}\left(\mathbf{\Pi}^{\boldsymbol{\nabla},K} \bv_h\right) \in \boldsymbol{\epsilon}\left([\mathbb{P}_k(K)]^2\right)$, uniqueness of the $L^2$-projection onto $\boldsymbol{\epsilon}\left([\mathbb{P}_k(K)]^2\right)$ yields the identity
\begin{equation*}
\Pi^{\boldsymbol{\epsilon},K}\bv_h = \boldsymbol{\epsilon}\left(\mathbf{\Pi}^{\boldsymbol{\nabla},K} \bv_h\right).
\end{equation*}

We now define the total computable error via the local polynomial approximation of the discrete solutions in the last time step as \begin{align*}
    \bar{\mathrm{e}}_{h}^* := \begin{cases}
    \displaystyle{\sum_{K\in \mathcal{T}_h}\frac{\|{(\bu')}^n-\Pi^{0,K}\underline{\bv}_h^n\|_K}{\|{(\bu')}^n\|_K}+h\sum_{K\in \mathcal{T}_h}\frac{\|\bepsilon(\bu^n)-\Pi^{\bepsilon,K}\underline{\bu}_h^n\|_{K}}{\|\bepsilon(\bu^n)\|_{K}}} \quad \text{in 2D},\\
    \displaystyle{\sum_{K\in \mathcal{T}_h}\frac{\|{(\bu')}^n-\Pi^{0,K}\underline{\bv}_h^n\|_K}{\|{(\bu')}^n\|_K}+h\sum_{K\in \mathcal{T}_h}\frac{\|\bepsilon(\bu^n)-\bepsilon(\Pi^{\nabla,K}\underline{\bu}_h^n)\|_{K}}{\|\bepsilon(\bu^n)\|_{K}}} \quad \text{in 3D}.
\end{cases}
\end{align*}
We remark that the observed difference between the two- and three-dimensional error metrics is a consequence of the projection space definitions implemented in the open-source library \texttt{vem++} \cite{dassi2023vem++}, which serves as the computational framework for the forthcoming experiments. Whereas the experimental rate of convergence $r(\bullet)$ in the refinement $1\leq j$ is computed from the formula $$r(\bar{\textnormal{e}}_h^*)^{j+1} = \frac{\log\left(\frac{(\bar{\textnormal{e}}_h^*)^{j+1}}{(\bar{\textnormal{e}}_h)^{j}}\right)}{\log\left(\frac{h^{j+1}}{h^{j}}\right)},$$ where $(\bar{\textnormal{e}}_h)^{j}$ and $h^j$ denote the total computable error and mesh size in the refinement $j$, respectively.

Finally, the stabilization term $S^{K}(\bu_h,\bv_h)$ follows the known ``\texttt{D-recipe}" first introduced in \cite{BEIRAODAVEIGA2017}. A comprehensive discussion and detailed formulation of this operator can also be found in \cite{ARTIOLI2020}. 

\begin{figure}[!h]
    \centering
    \subfigure[Quadrilateral]{\includegraphics[width=0.24\textwidth,trim={9.5cm 1.25cm 8.75cm  1.5cm},clip]{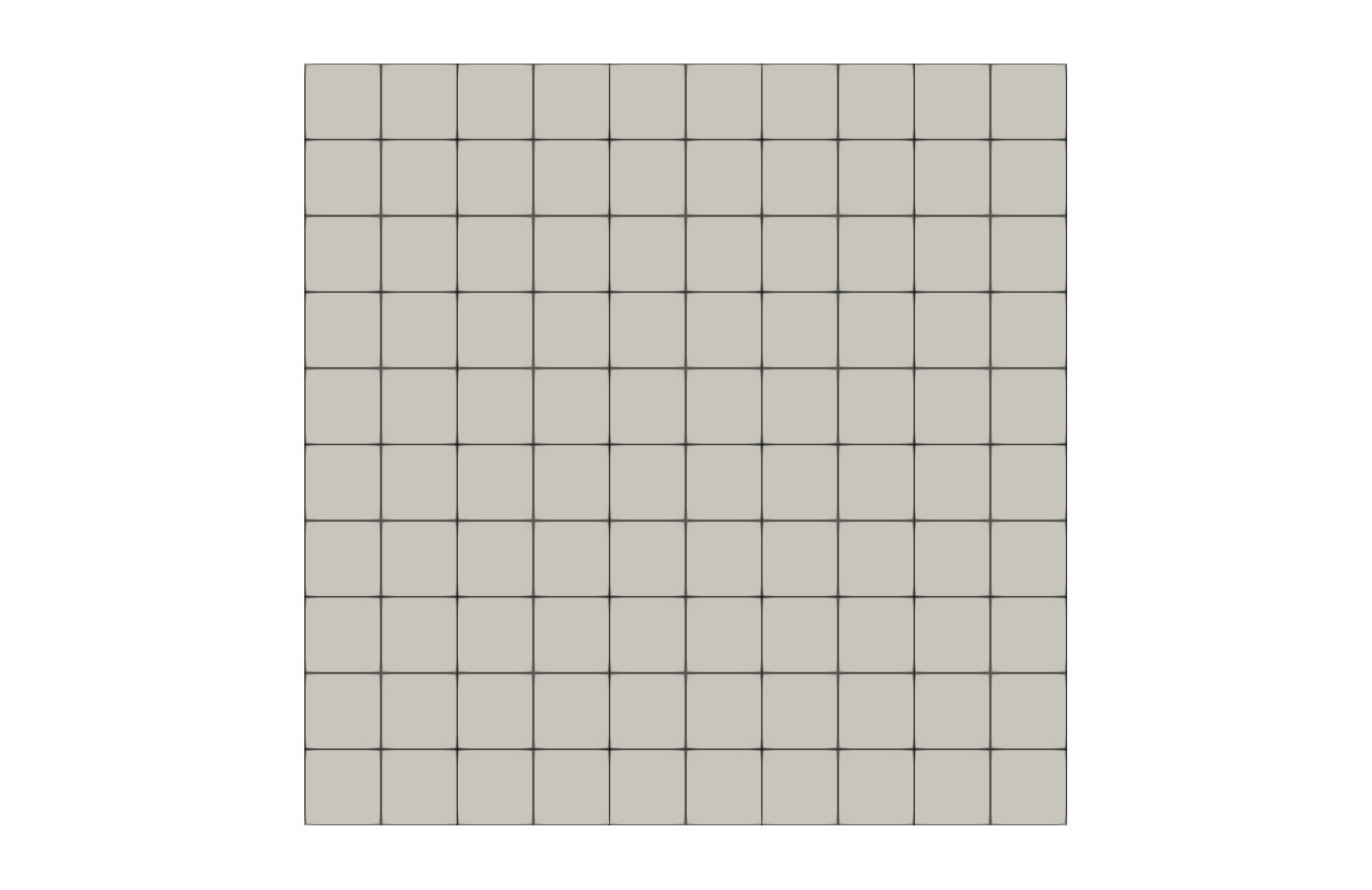}} 
    \subfigure[Distorted]{\includegraphics[width=0.24\textwidth,trim={9.5cm 1.25cm 8.75cm  1.5cm},clip]{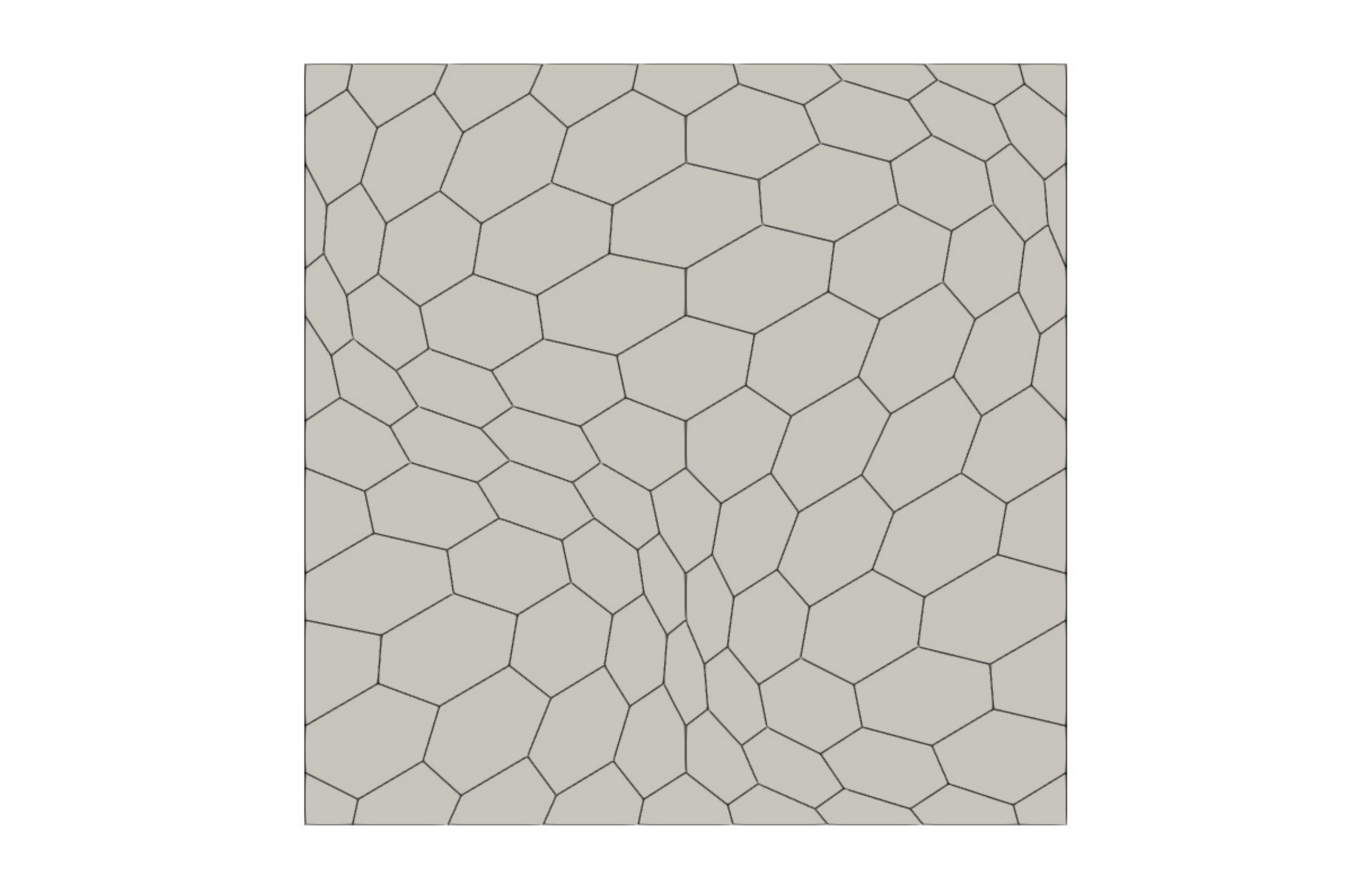}}  
    \subfigure[Hexagonal]{\includegraphics[width=0.24\textwidth,trim={9.5cm 1.25cm 8.75cm  1.5cm},clip]{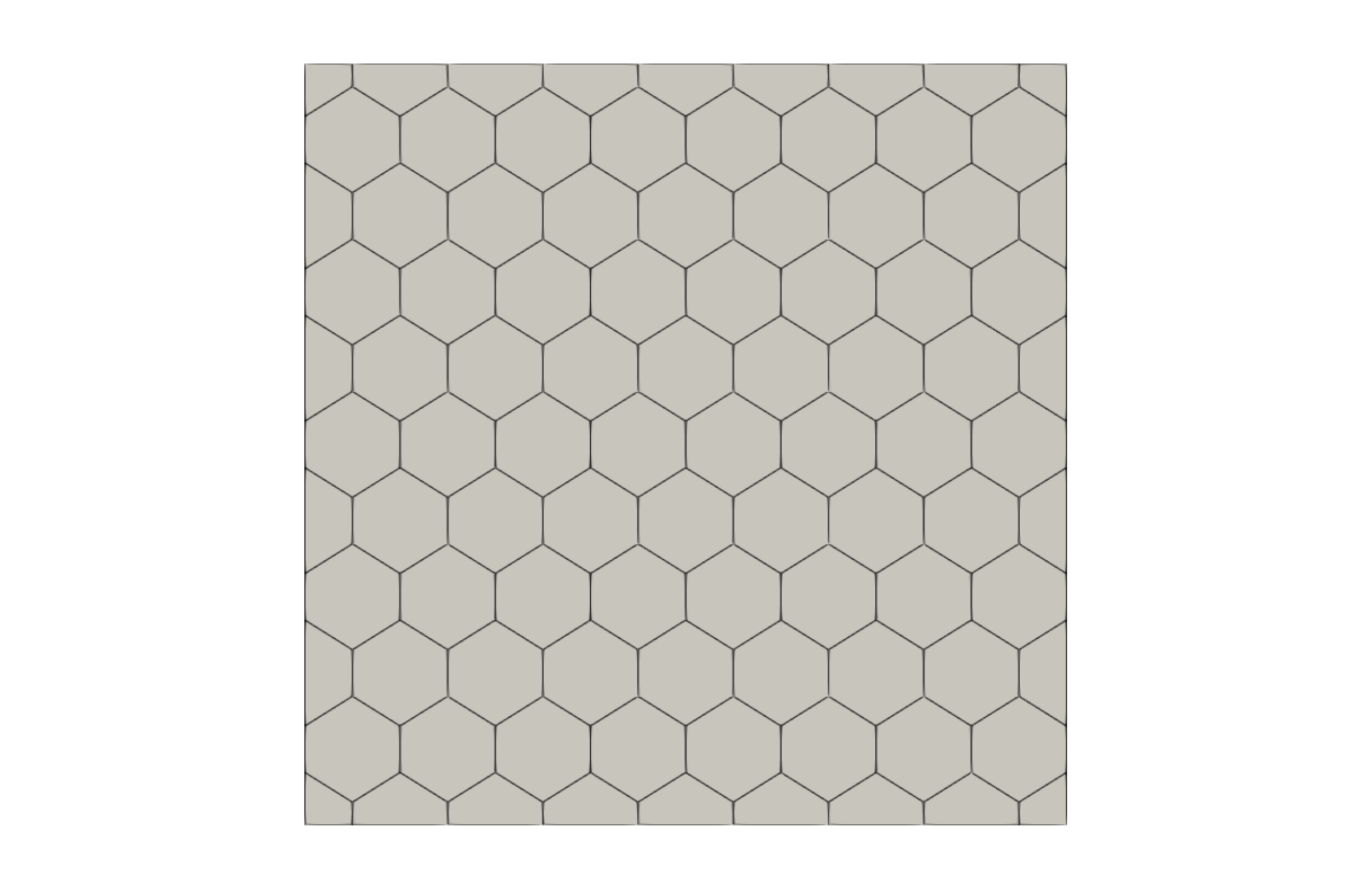}}
    \subfigure[Triangular]{\includegraphics[width=0.24\textwidth,trim={9.5cm 1.25cm 8.75cm  1.5cm},clip]{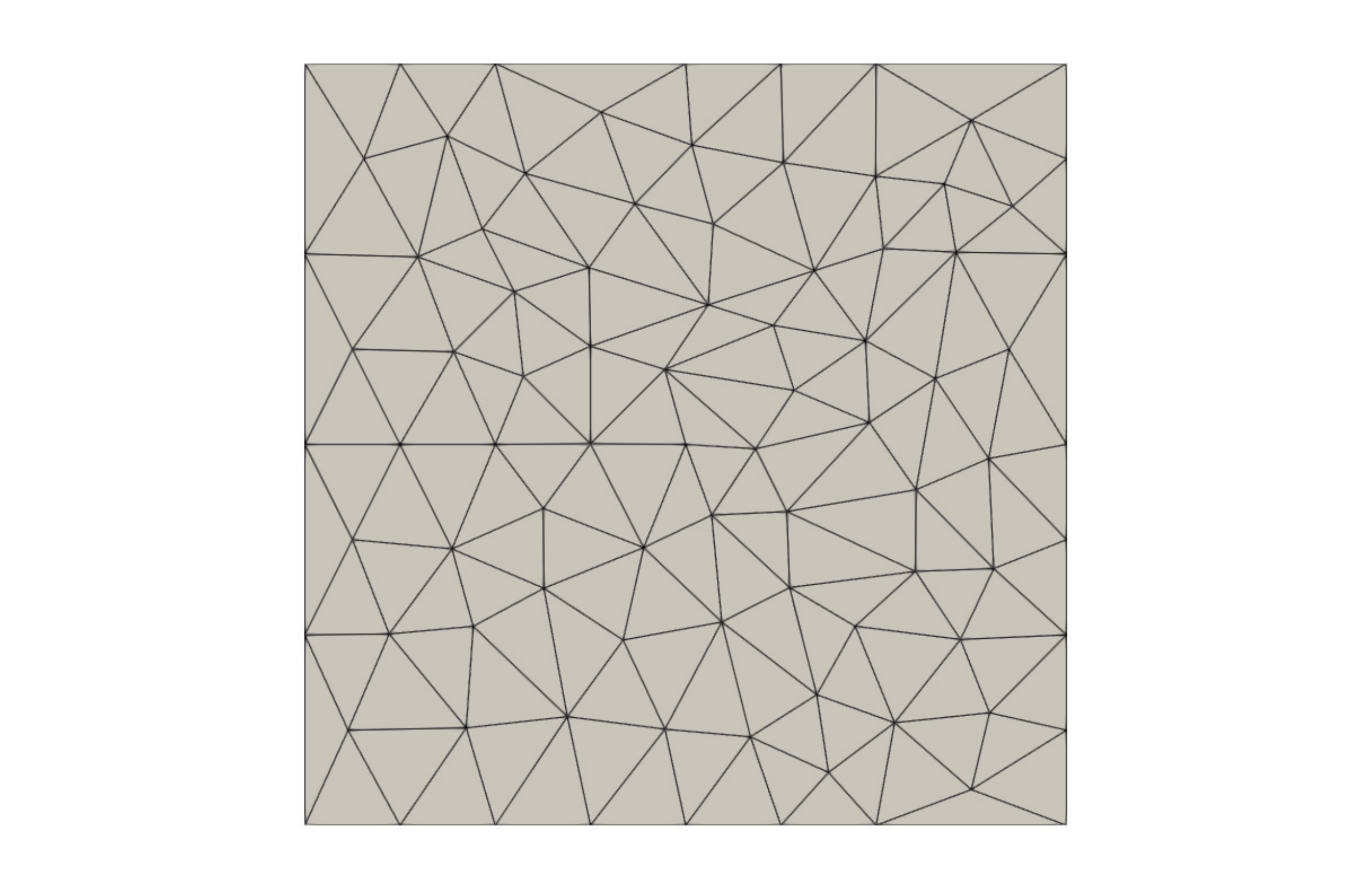}}
    \caption{Experiment 1. Variety of meshes used in the two-dimensional convergence test under $h$-refinement and $\Delta t$-refinement.}\label{fig:meshes2D}
\end{figure}

\subsection{Experiment 1: Convergence under $h$-refinement and $\Delta t$-refinement in two-dimensions}\label{sec:exp1}
This numerical example solves \eqref{model_prblm} for the unit square domain $\Omega=(0,1)^2$ using the proposed space-time scheme (cf. Section~\ref{sec:fully-discr}) across the different spatial mesh discretizations shown in Figure~\ref{fig:meshes2D}. We point out that the physical parameters are set to unity values for the remainder of this test.

We first evaluate the error $\bar{\mathrm{e}}_{h}^*$ under successive $h$-refinement using the manufactured solution
$$\bu(x,y;t)=t^2x^2y^2(1-x)^2(1-y)^2(1,1)^{\tt t}.$$
Because the second-order Crank--Nicolson temporal scheme integrates quadratic time functions exactly, the temporal truncation error vanishes, allowing us to isolate and measure the spatial error exclusively. In this case, we set the time step as $\Delta t = 0.01$.

Figure~\ref{fig:convergence2dSpace} presents the spatial convergence curves for all considered mesh families and polynomial degrees $k \in \{1,2,3,4\}$. As predicted by Theorem~\ref{th:fully}, the optimal spatial convergence rate of $\mathcal{O}(h^{k+1})$ is achieved across all test cases.

\begin{figure}[!h]
    \centering
    \includegraphics[width=\linewidth]{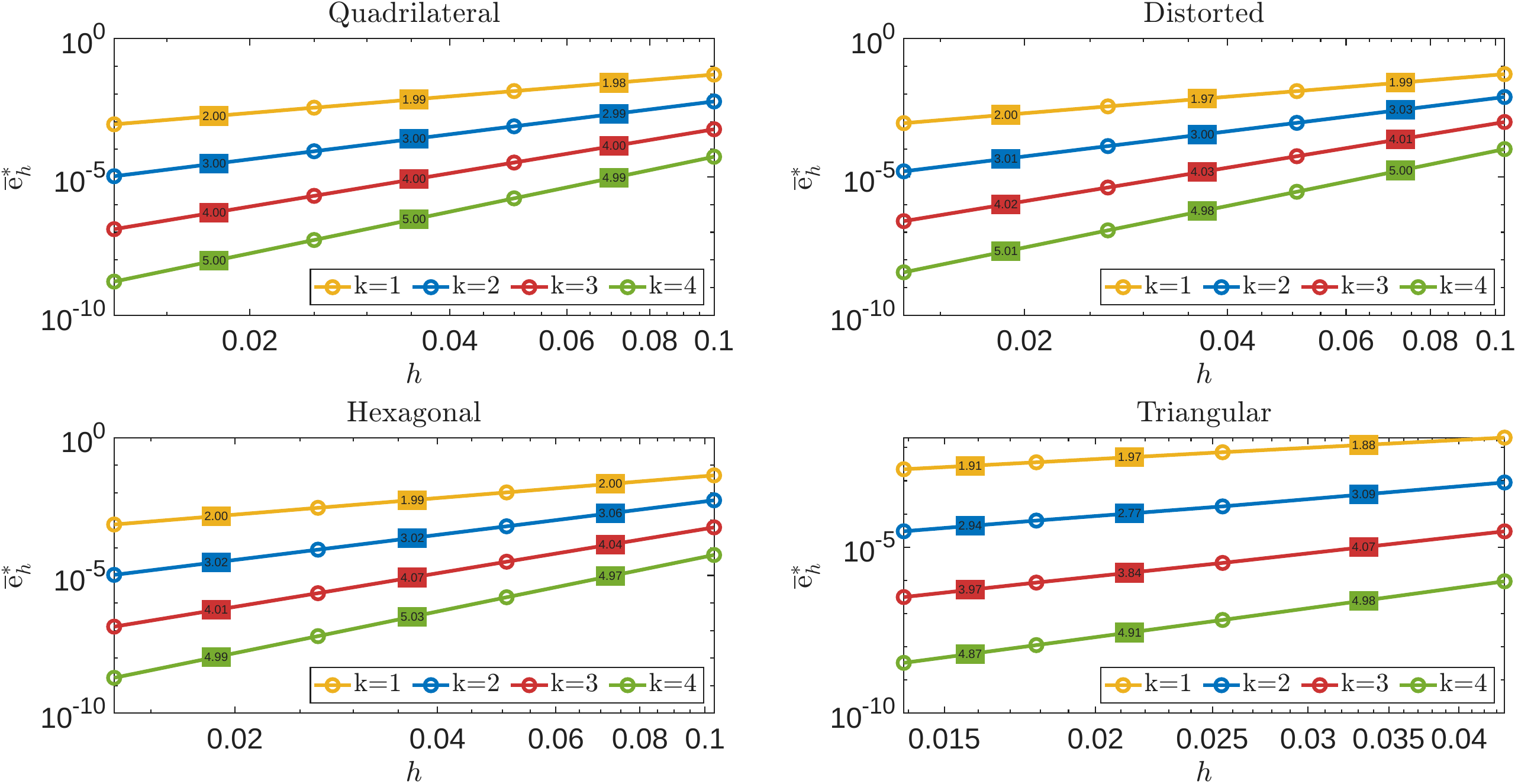}
    \caption{Experiment 1. Behavior of the computable error $\overline{\textnormal{e}}_h^*$ across various meshes and different polynomial degrees, under uniform $h$-refinement.}
    \label{fig:convergence2dSpace}
\end{figure}

Next, we evaluate the temporal convergence rate using the manufactured solution
$$\bu(x,y;t)=(1-\cos(2\pi t))xy(1-x)(1-y)(1,1)^{\tt t}.$$
To prevent spatial errors from polluting the temporal rate, the spatial mesh is fixed to its finest refinement level ($h<0.02$) and the VEM polynomial degree (cf. Section~\ref{sec:vem}) is set to $k=3$.

Figure~\ref{fig:convergence2dTime} displays the resulting temporal error curves for the meshes in Figure~\ref{fig:meshes2D}. The numerical results confirm that the proposed scheme achieves the optimal second-order temporal convergence rate of $\mathcal{O}(\Delta t^2)$ (see Theorem~\ref{th:fully}).

\begin{figure}[h!]
    \centering
    \includegraphics[width=\linewidth]{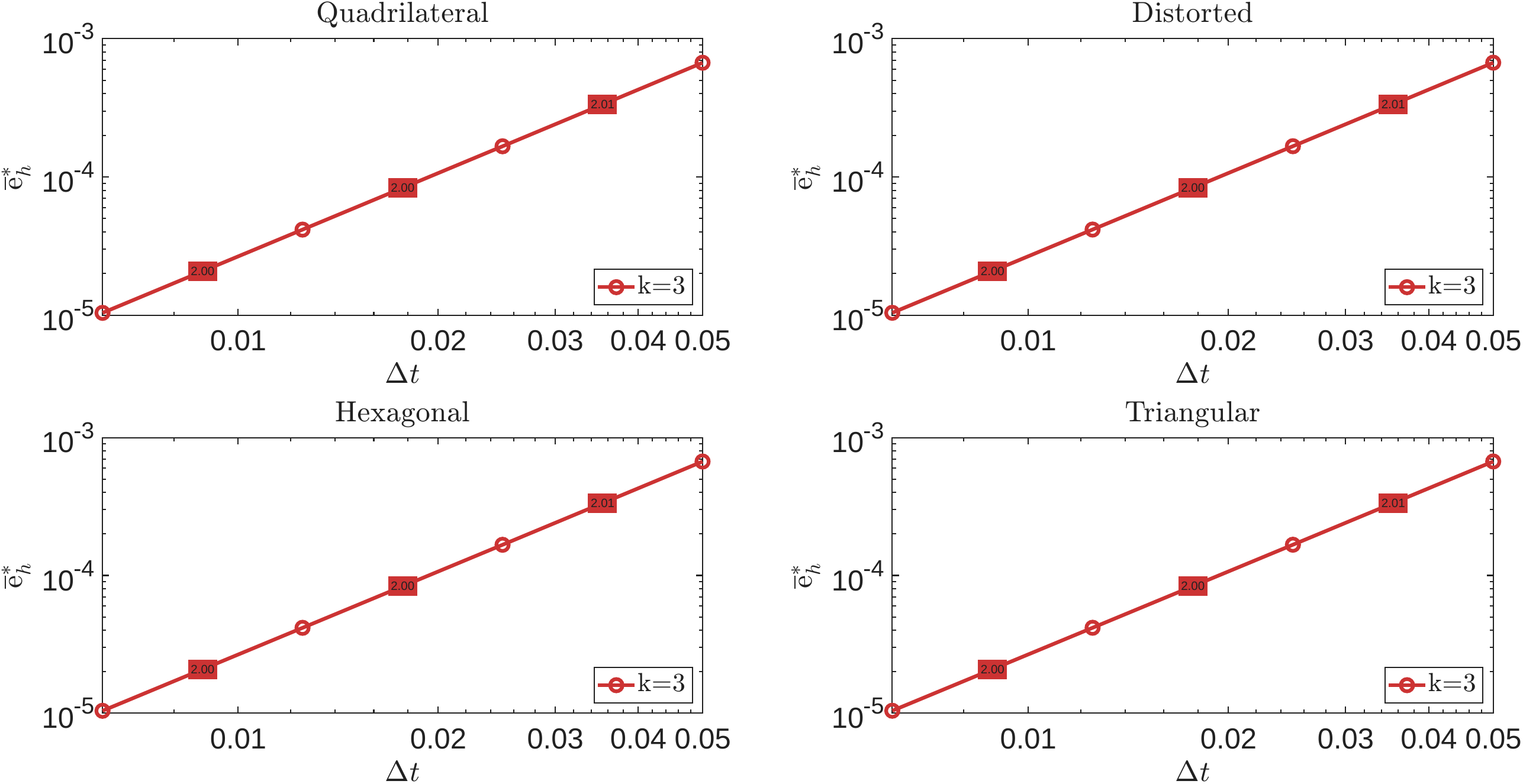}
    \caption{Experiment 1. Behavior of the computable error $\overline{\textnormal{e}}_h^*$ across various meshes for $k=3$, under uniform $\Delta t$-refinement.}
    \label{fig:convergence2dTime}
\end{figure}

\begin{figure}[!h]
    \centering
    \subfigure[Voronoi]{\includegraphics[width=0.3\textwidth,trim={7.5cm 0.cm 8.25cm  1.5cm},clip]{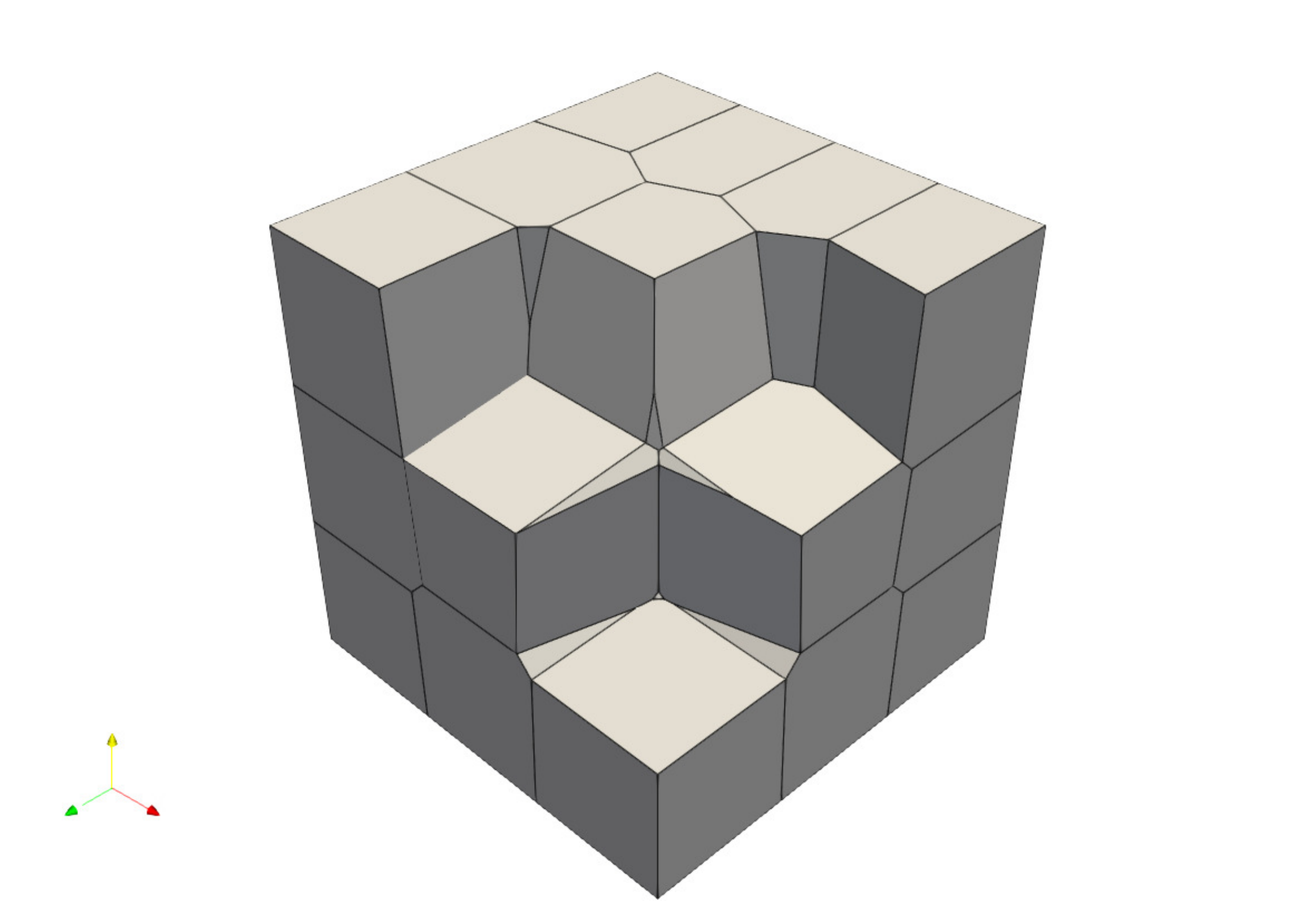}} 
    \subfigure[Random]{\includegraphics[width=0.3\textwidth,trim={7.5cm 0.cm 8.25cm  1.5cm},clip]{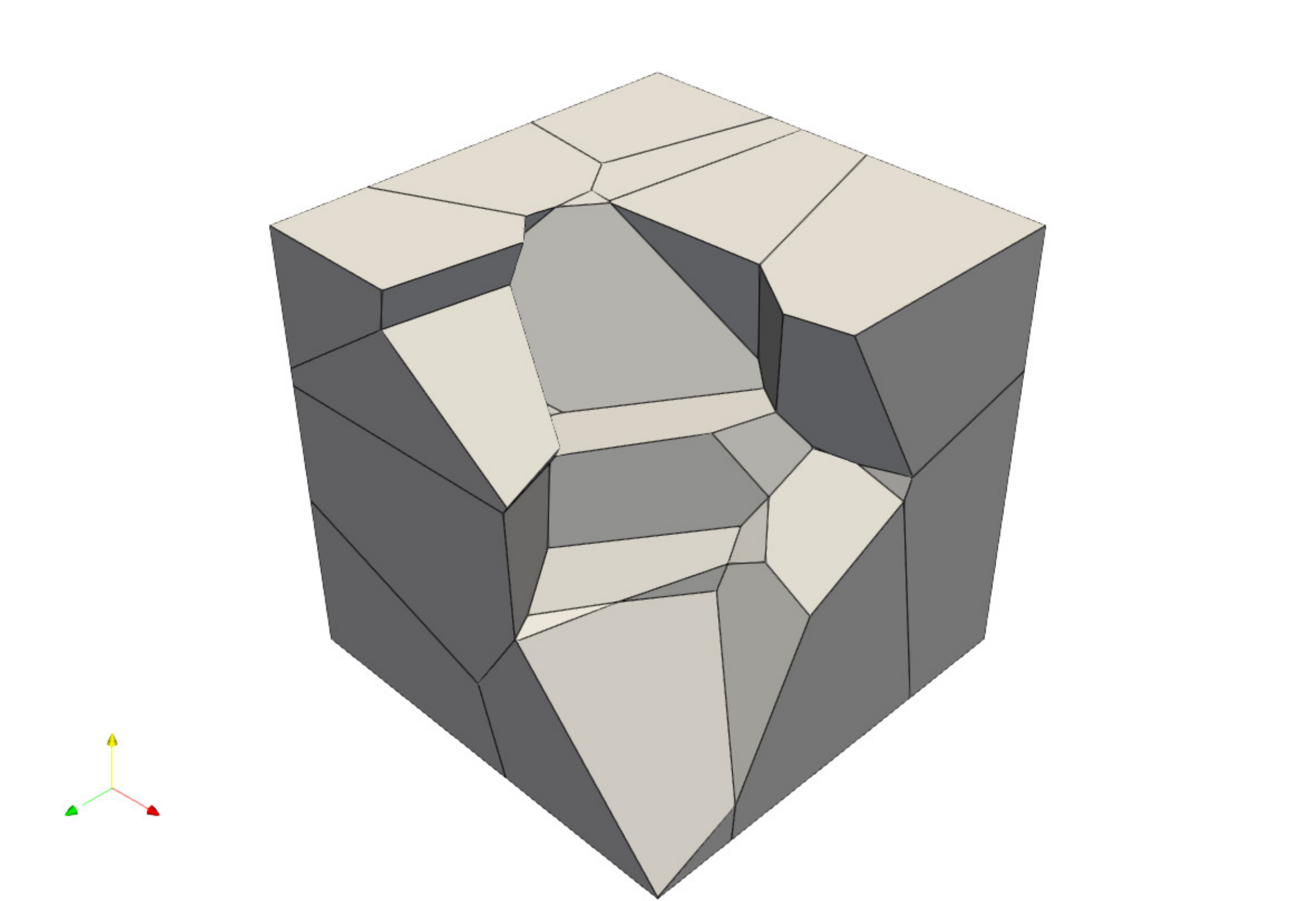}}  
    \subfigure[Cubical]{\includegraphics[width=0.3\textwidth,trim={7.5cm 0.cm 8.25cm  1.5cm},clip]{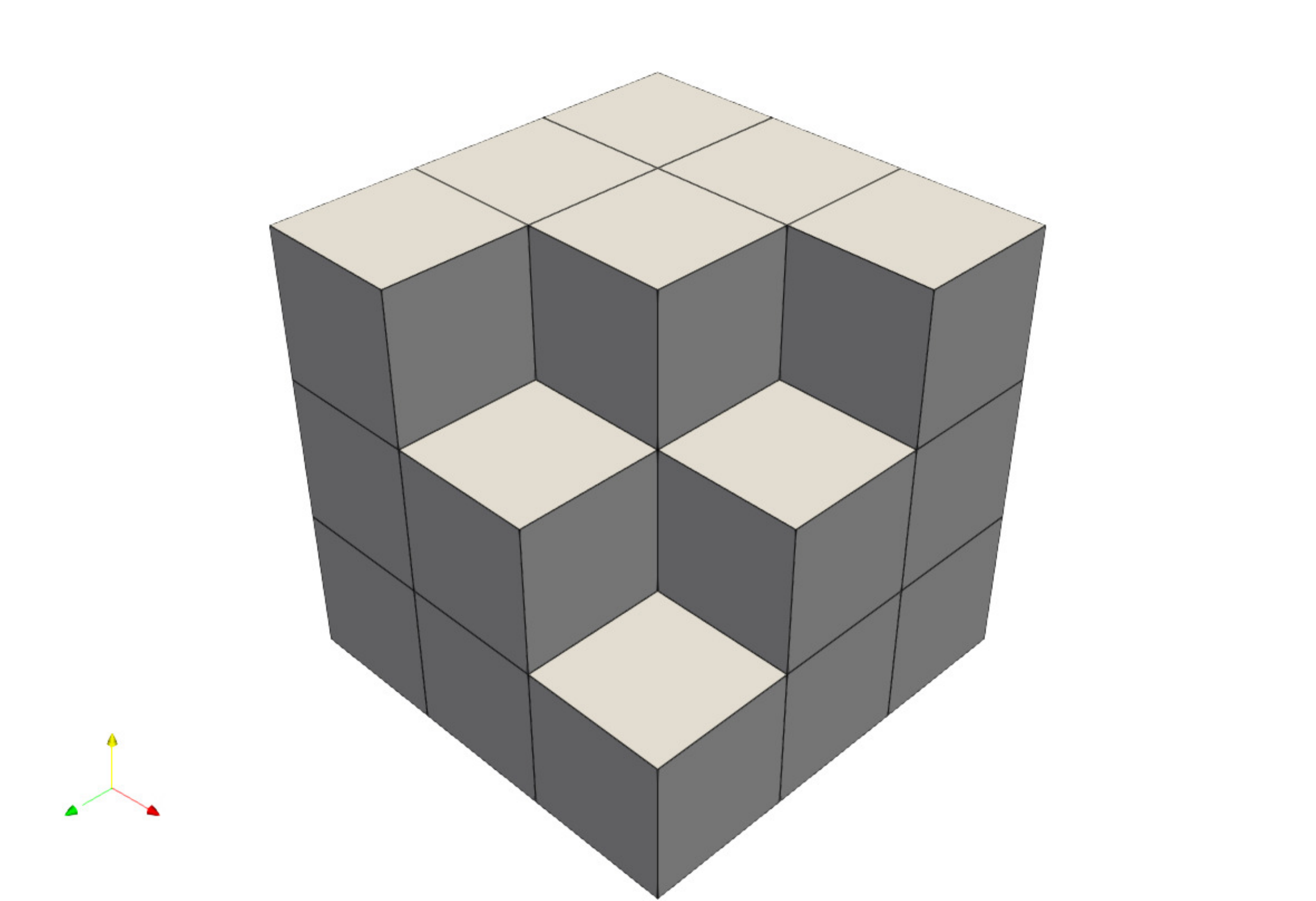}}
    \caption{Experiment 2. Variety of meshes used in the three-dimensional convergence test under $h$-refinement and $\Delta t$-refinement.}\label{fig:meshes3D}
\end{figure}

\subsection{Experiment 2: Convergence under $h$-refinement and $\Delta t$-refinement in three-dimensions}
We now extend the numerical experiments of Section~\ref{sec:exp1} to three dimensions. Specifically, we solve \eqref{model_prblm} on the unit cube domain $\Omega = (0,1)^3$, discretized using the mesh element types shown in Figure~\ref{fig:meshes3D}. As before, all physical parameters are set to unity.

The behavior of the error $\bar{\mathrm{e}}_{h}^*$ is evaluated for $h$-refinement and $\Delta t$-refinement  by defining the following manufactured solutions:
\begin{align*}
    \bu(x,y,z;t)&=t^2 \sin(\pi x) \sin(\pi y) \sin(\pi z)(1,1)^{\tt t}, \text{ and }\\
    \bu(x,y,z;t)&=(1-\cos(2\pi t))\sin(\pi x) \sin(\pi y) \sin(\pi z)(1,1,1)^{\tt t},
\end{align*}
respectively. The simulation presented in this test is defined for the lowest-order 3D VEM space $k=1$ (cf. Section~\ref{sec:vem}). Regarding the spatial test, the time step is set to $\Delta t = 0.01\text{s}$. For the time test, we set the three meshes under consideration with $16\text{,}000$ total number of elements which gives e.g. $313\text{,}746$ Degrees of Freedom ($h<0.07$) for the random mesh.

Figures~\ref{fig:convergence3dSpace} and \ref{fig:convergence3dTime} illustrate the respective spatial and time convergence curves for all considered mesh families. As predicted by Theorem~\ref{th:fully}, the optimal spatial convergence rate of $\mathcal{O}(h^{k+1})$ is achieved across all the meshes tested in the spatial case. Additionally, time discretization displays apparent superconvergence. 

\begin{figure}[!h]
    \centering
    \includegraphics[width=\linewidth]{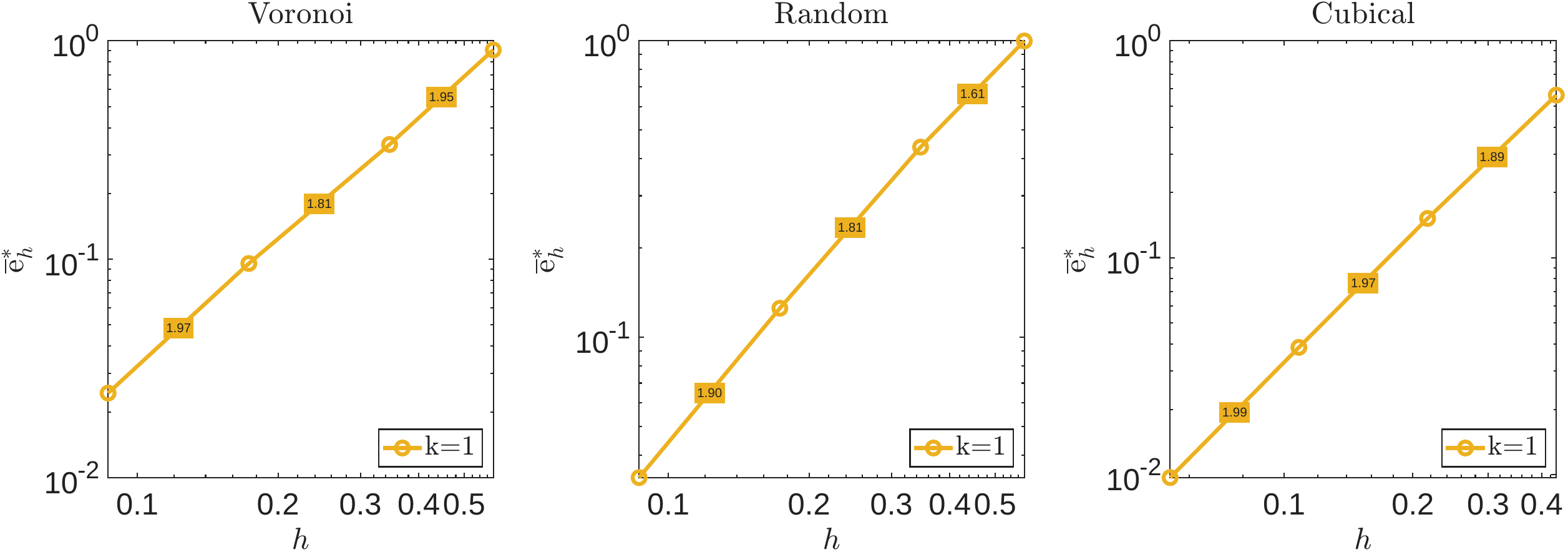}
    \caption{Experiment 2. Behavior of the computable error $\overline{\textnormal{e}}_h^*$ across various meshes for the lowest-case VEM, under uniform $h$-refinement.}
    \label{fig:convergence3dSpace}
\end{figure}

\begin{figure}[!h]
    \centering
    \includegraphics[width=\linewidth]{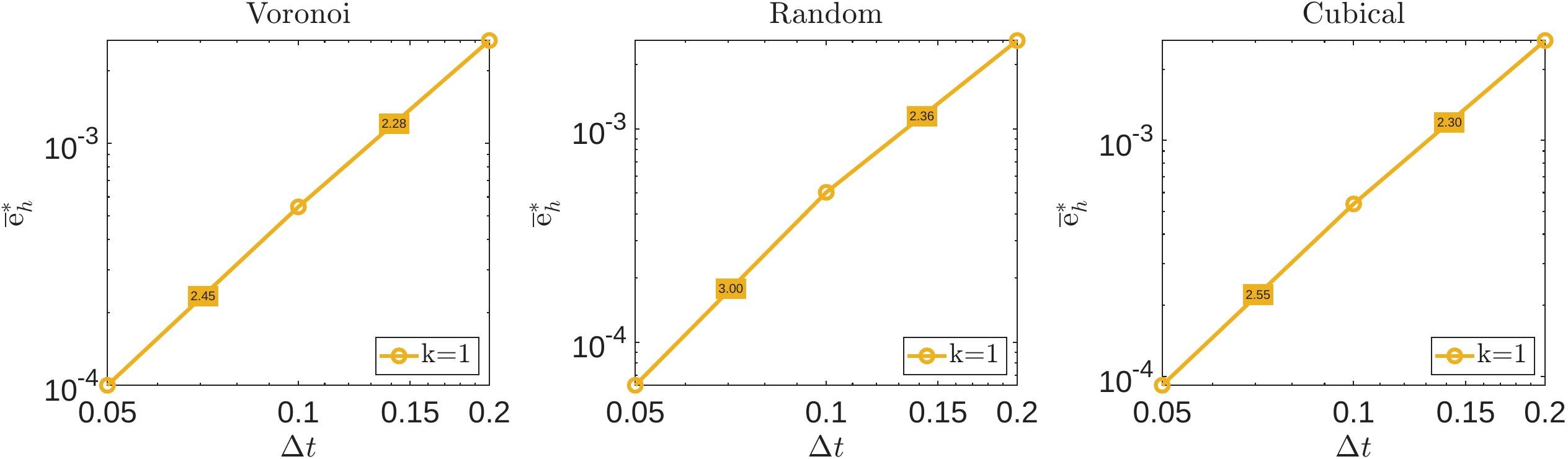}
    \caption{Experiment 2. Behavior of the computable error $\overline{\textnormal{e}}_h^*$ across various meshes for the lowest-case VEM, under uniform $\Delta t$-refinement.}
    \label{fig:convergence3dTime}
\end{figure}

\begin{figure}[!h]
    \centering
    \subfigure[single-perforation]{\includegraphics[width=0.35\textwidth,trim={7.5cm 4.cm 7.5cm  4.cm},clip]{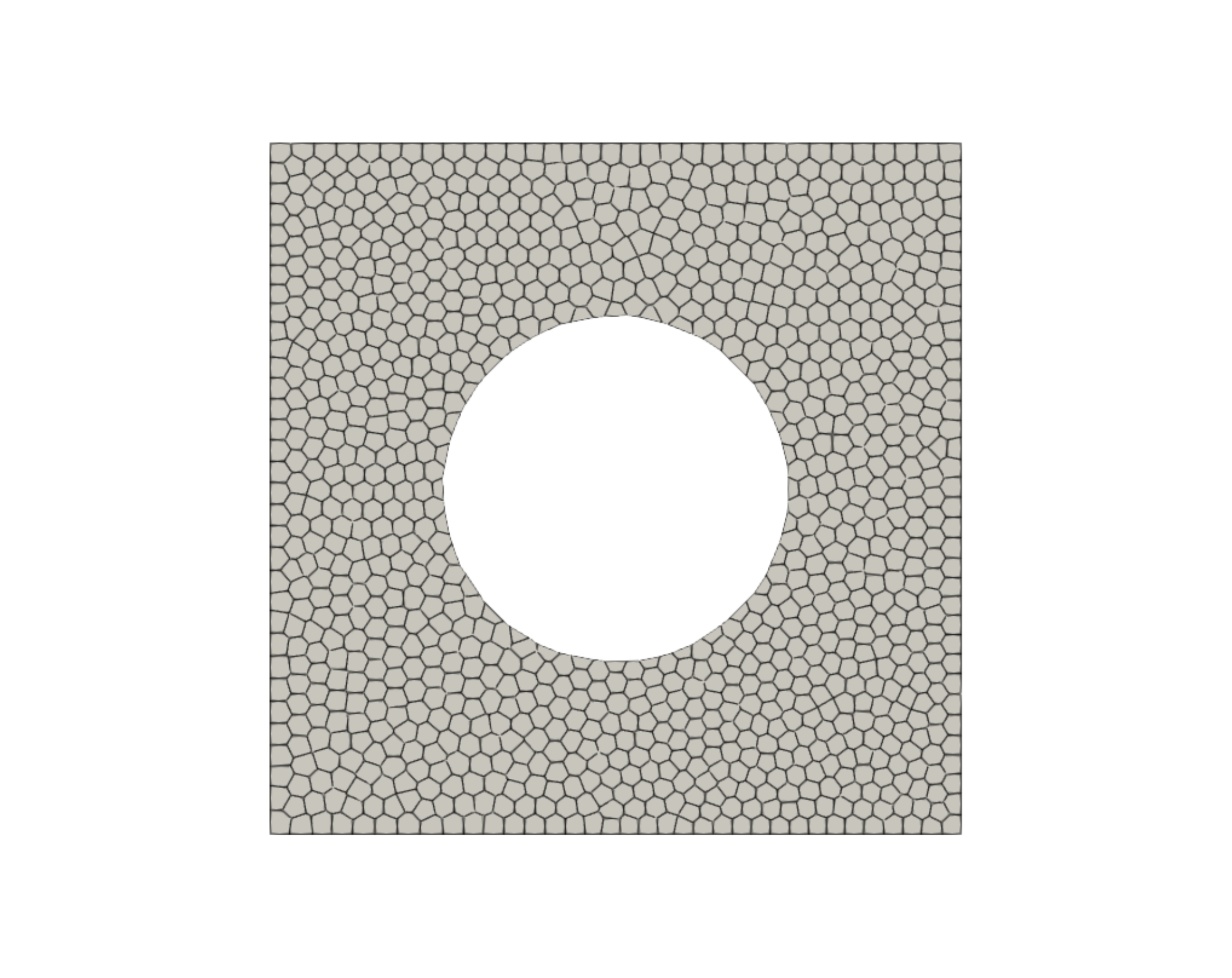}} 
    \subfigure[four-perforation]{\includegraphics[width=0.35\textwidth,trim={7.5cm 4.cm 7.5cm  4.cm},clip]{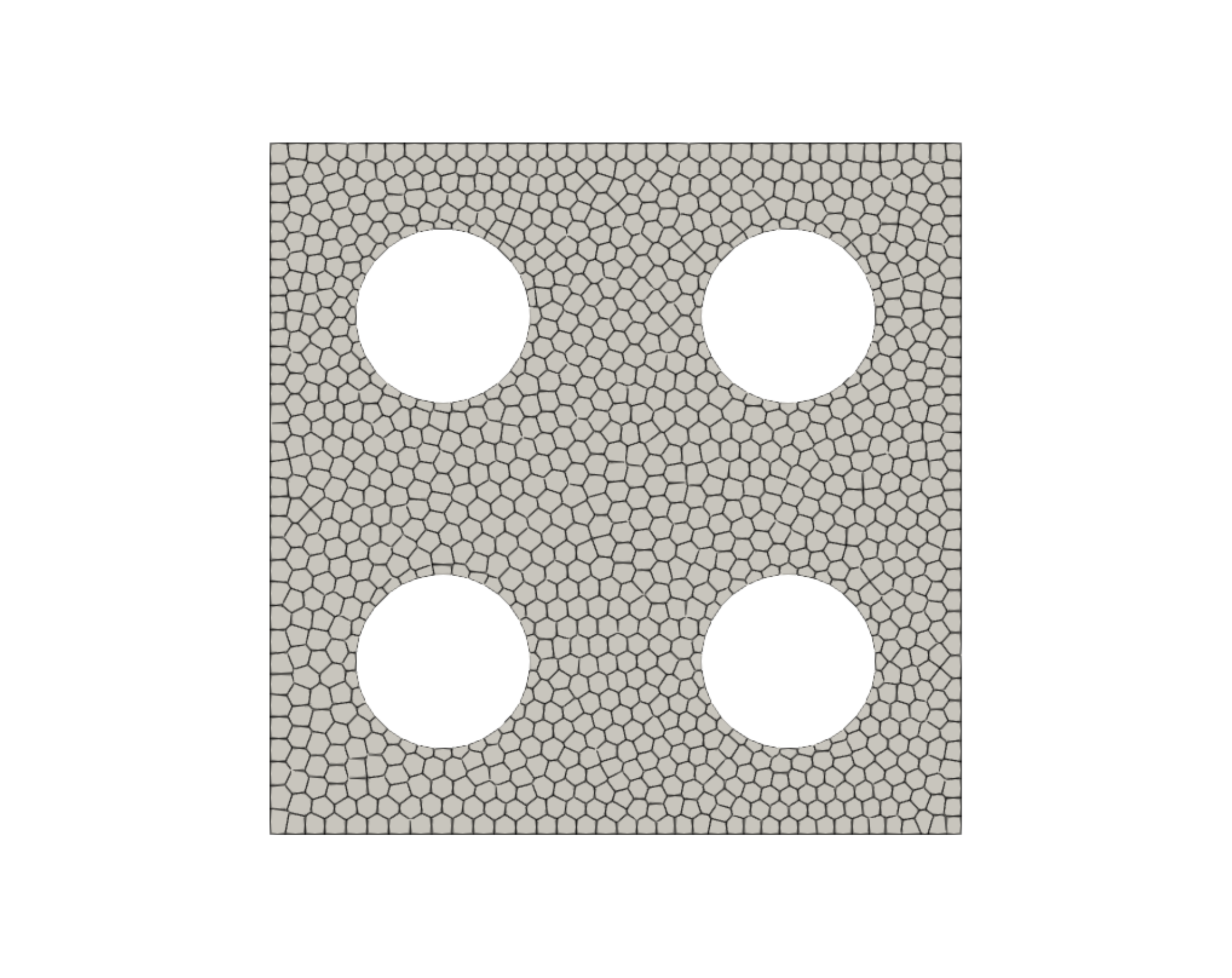}}  
    \caption{Experiment 3. Coarse domain discretizations used in the energy dissipation test, showing meshes with $1\text{,}056$ elements for the single-perforation (a) and $1\text{,}152$ elements for the four-perforation (b) geometries.}\label{fig:meshes2DPerfored}
\end{figure}

\subsection{Experiment 3: Energy dissipation and damping properties of perforated plates}
In this experiment, we test the transient behavior of perforated plates for linear viscoelastic (Kelvin--Voigt type) materials. Similar to \cite{meddahi23}, we set the density of the material as $1 \text{Kg}/\text{m}^3$ and the elastic and viscous Lam\'e coefficients as:
$$\mu_1 = 11.583 \text{Pa}, \quad \lambda_1=17.308 \text{Pa}, \quad \mu_2=13.423 \text{Pa s}, \quad \text{and} \quad \lambda_2=657.718 \text{Pa s}.$$

We consider two domain geometries on the unit square $\Omega = (0,1)^2$, as shown in Figure~\ref{fig:meshes2DPerfored}: a single-perforation domain containing a central circular hole of radius $0.25\text{m}$ discretised with $16\text{,}416$ elements and $64\text{,}956$ Degrees of Freedom, and a four-perforation domain featuring four circular holes of radius $0.0625\text{m}$ centered at $(0.25, 0.25)$, $(0.25, 0.75)$, $(0.75, 0.25)$, and $(0.75, 0.75)$ consisting of $16\text{,}512$ elements and $65\text{,}360$. The domain is clamped along the bottom boundary ($y = 0$) and free elsewhere. To model a time-dependent traction along the top boundary, we apply a downward load vector given by:
$$\bf(x,y;t)=\begin{cases}
    (0,-5\sin(\frac{\pi t}{5}))^{\tt t}, & y>0.9 \text{ and } t\leq1, \\
    (0,0)^{\tt t}, & \text{otherwise}.
\end{cases}$$
The computational meshes consist of 16,416 and 16,512 Voronoi elements for the single- and four-perforation domains, respectively. The total physical time is set to $T = 15\text{s}$ with a uniform time step of $\Delta t = 0.1\text{s}$.

\begin{figure}[!h]
    \centering
    \subfigure[$t= 0.5 \text{s}$]{%
        \begin{minipage}{0.22\textwidth}
            \centering
            \includegraphics[width=\textwidth,trim={6.5cm 3.5cm 6.75cm 3.25cm},clip]{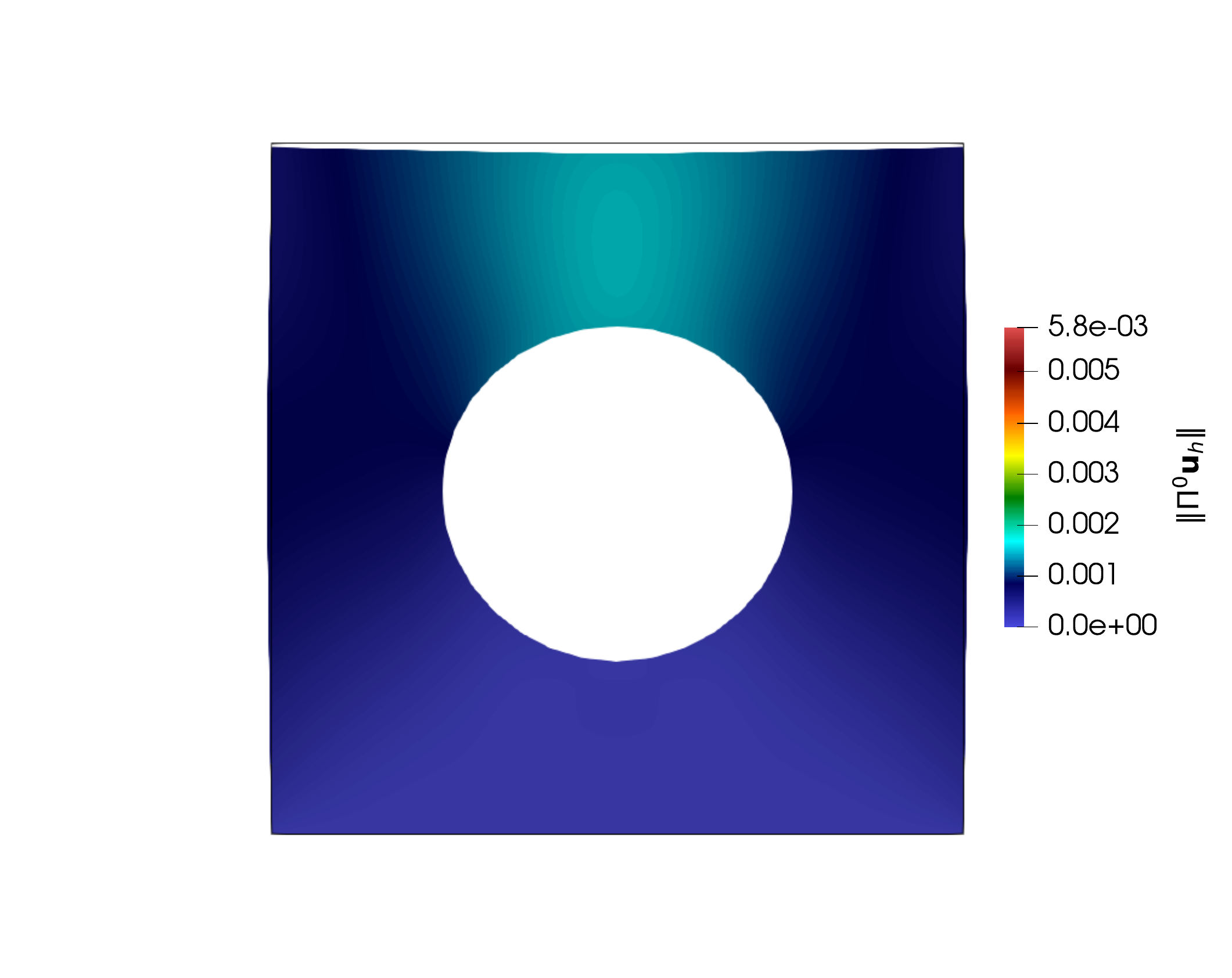}\\[0.5mm]
            \includegraphics[width=\textwidth,trim={6.5cm 3.5cm 6.75cm 3.25cm},clip]{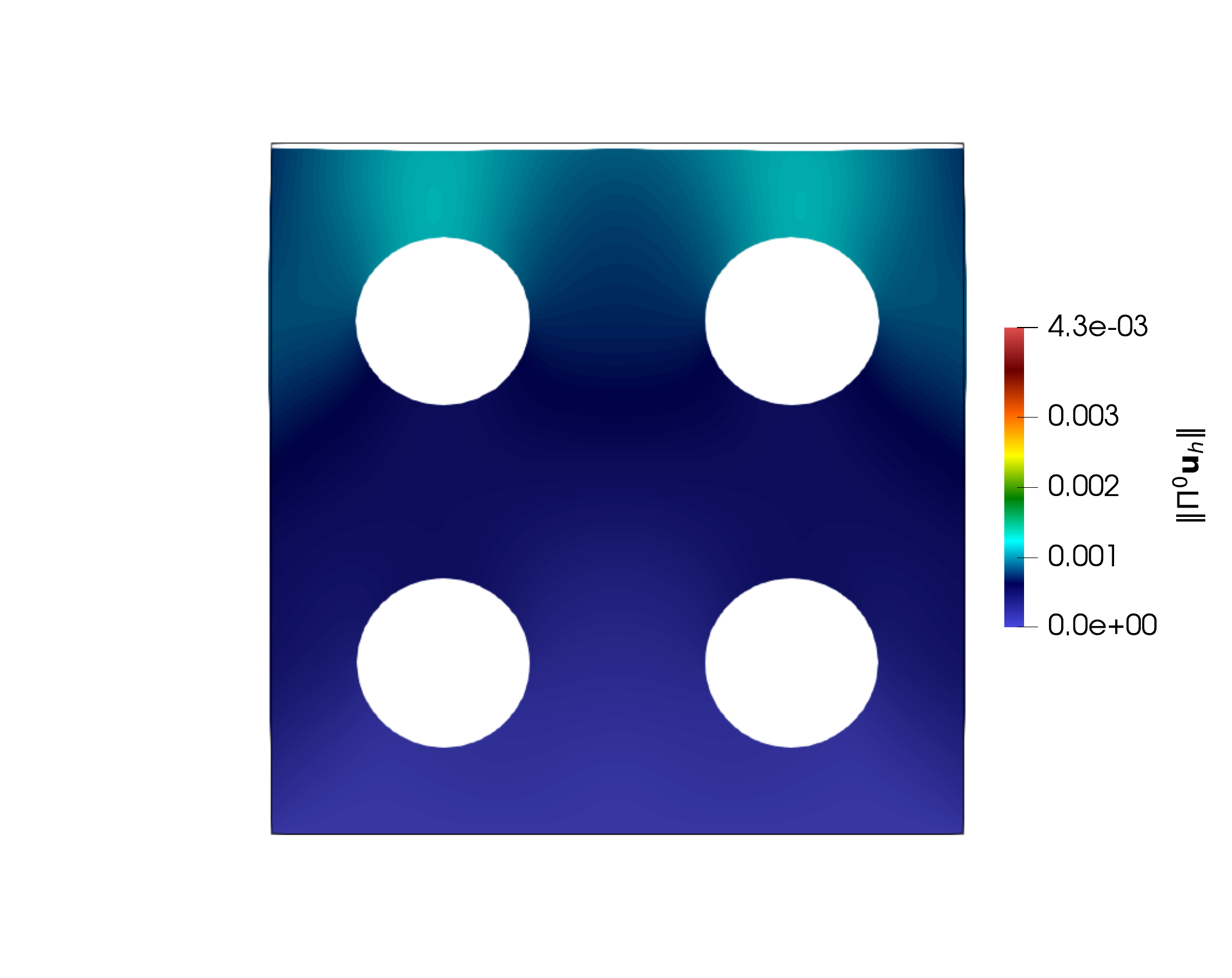}
        \end{minipage}%
    }\hfill
    \subfigure[$t= 1 \text{s}$]{%
        \begin{minipage}{0.22\textwidth}
            \centering
            \includegraphics[width=\textwidth,trim={6.5cm 3.5cm 6.75cm 3.25cm},clip]{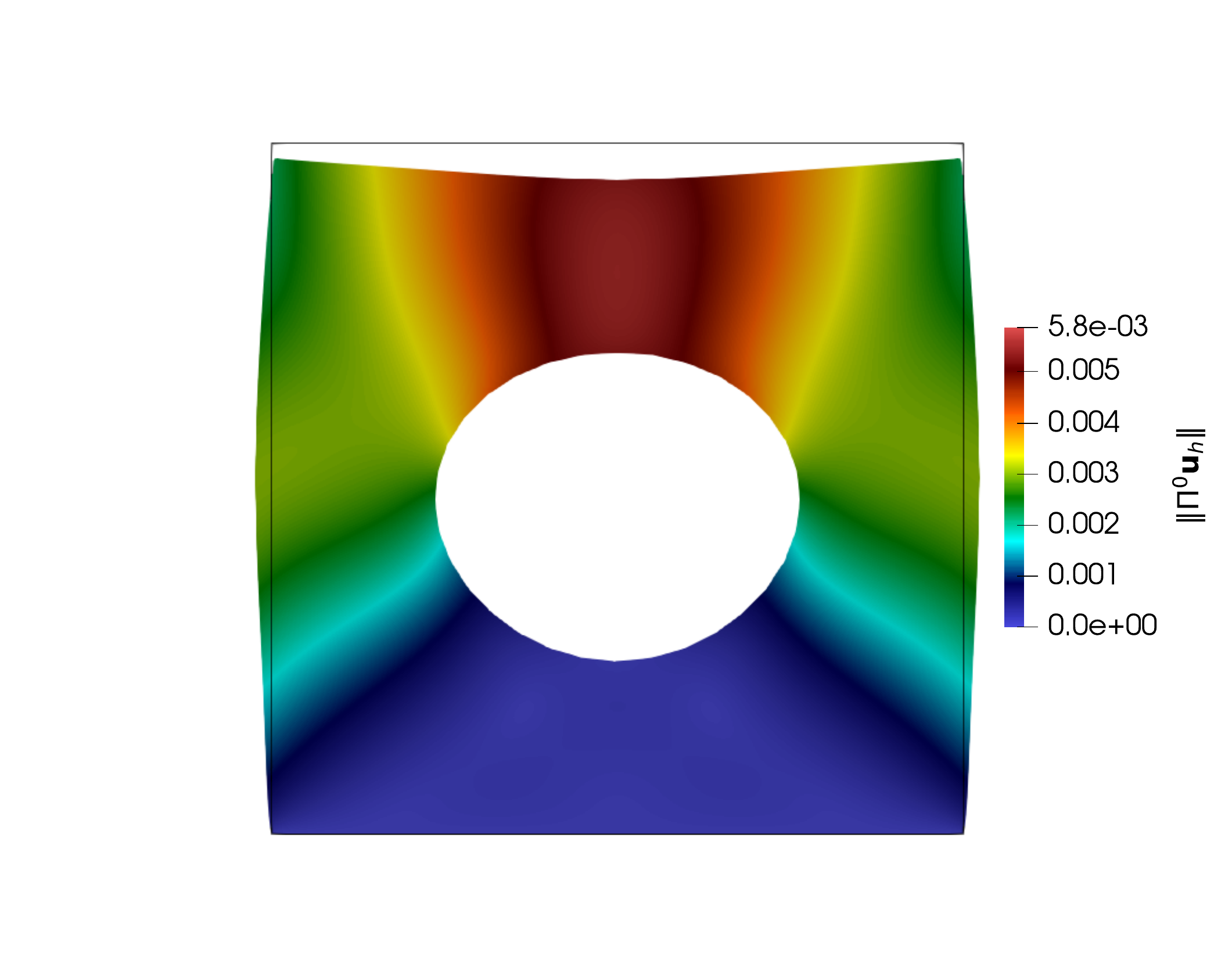}\\[0.5mm]
            \includegraphics[width=\textwidth,trim={6.5cm 3.5cm 6.75cm 3.25cm},clip]{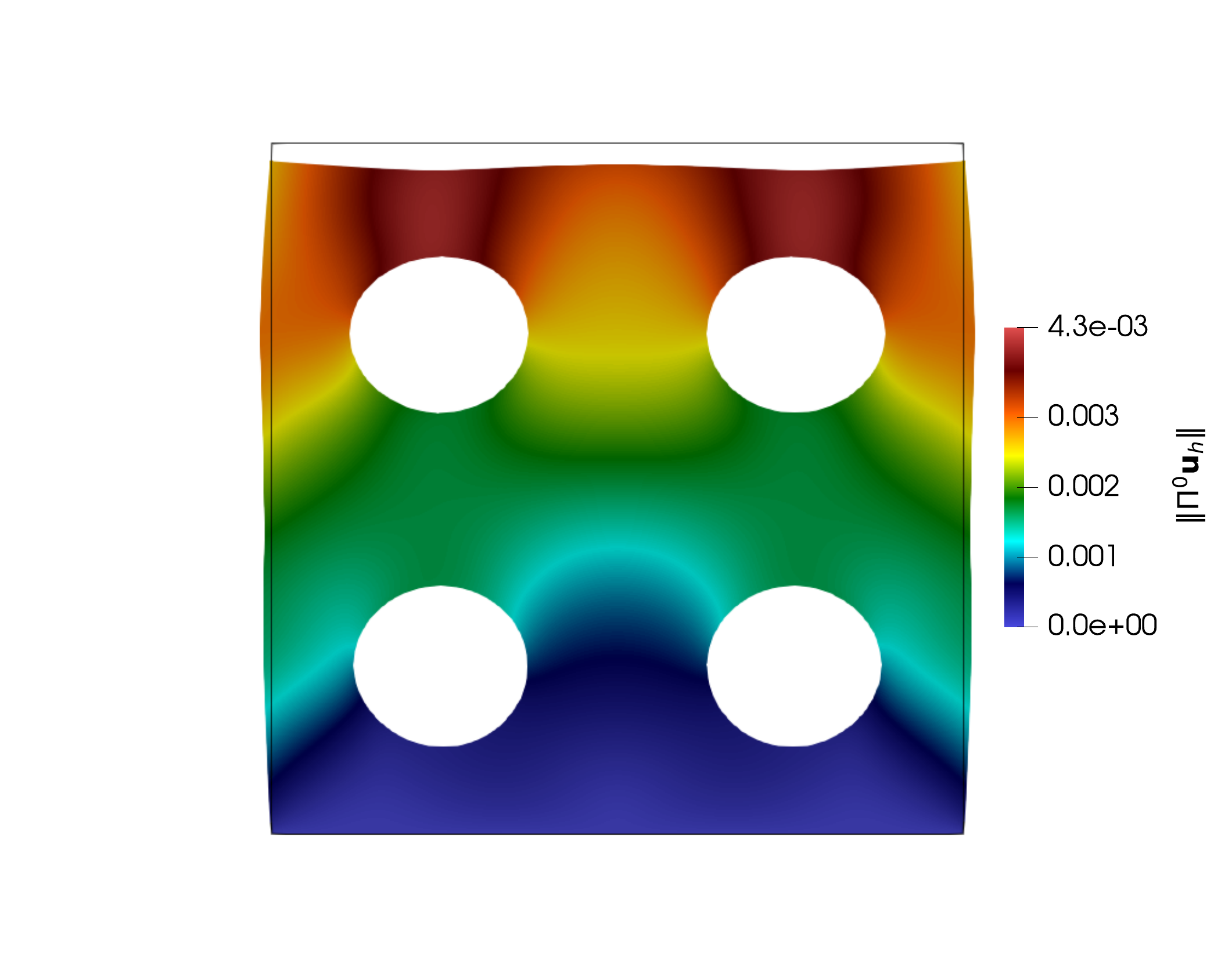}
        \end{minipage}%
    }\hfill
    \subfigure[$t= 1.5 \text{s}$]{%
        \begin{minipage}{0.285\textwidth}
            \centering
            \includegraphics[width=\textwidth,trim={6.5cm 3.5cm 0.25cm 3.25cm},clip]{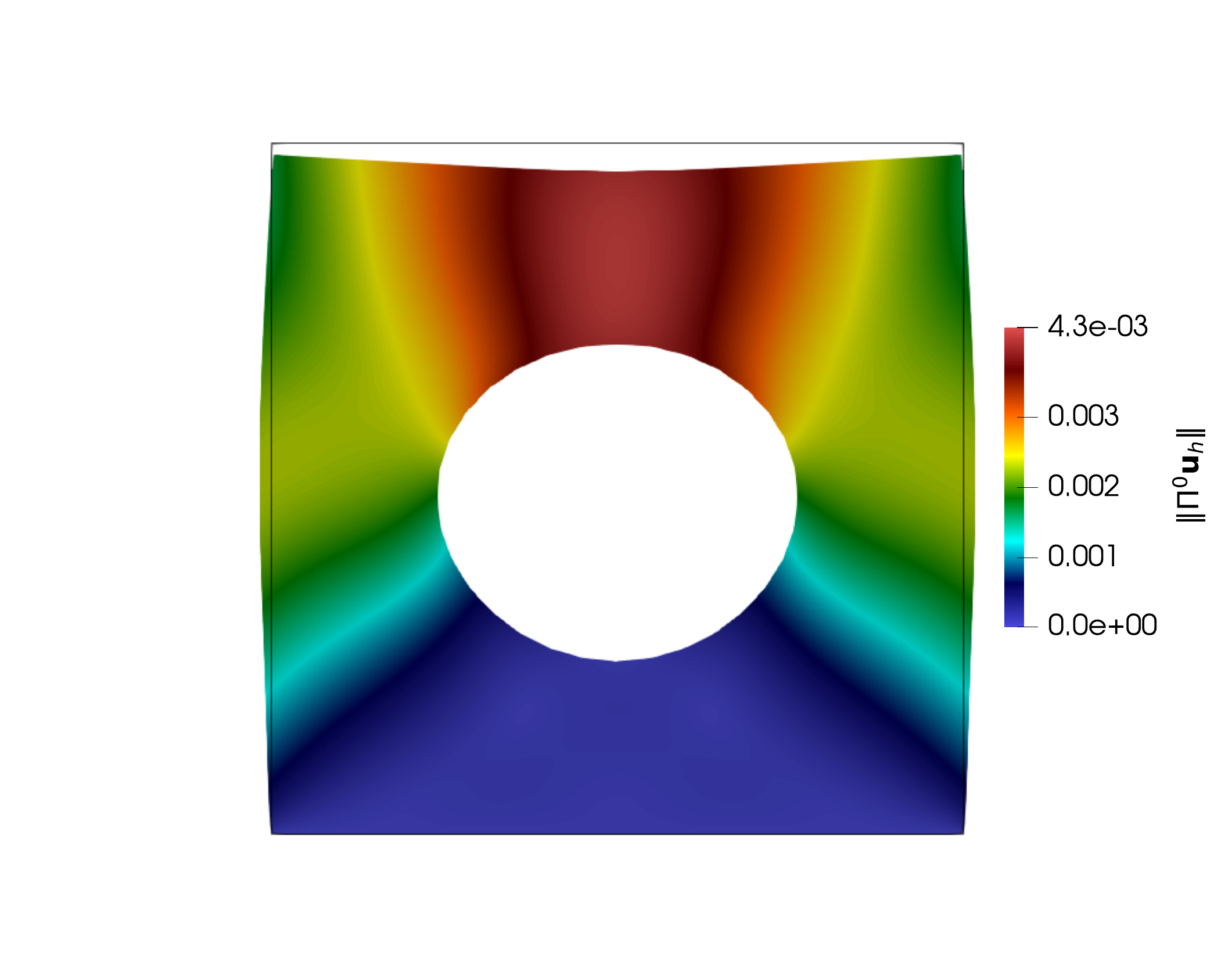}\\[0.5mm]
            \includegraphics[width=\textwidth,trim={6.5cm 3.5cm 0.25cm 3.25cm},clip]{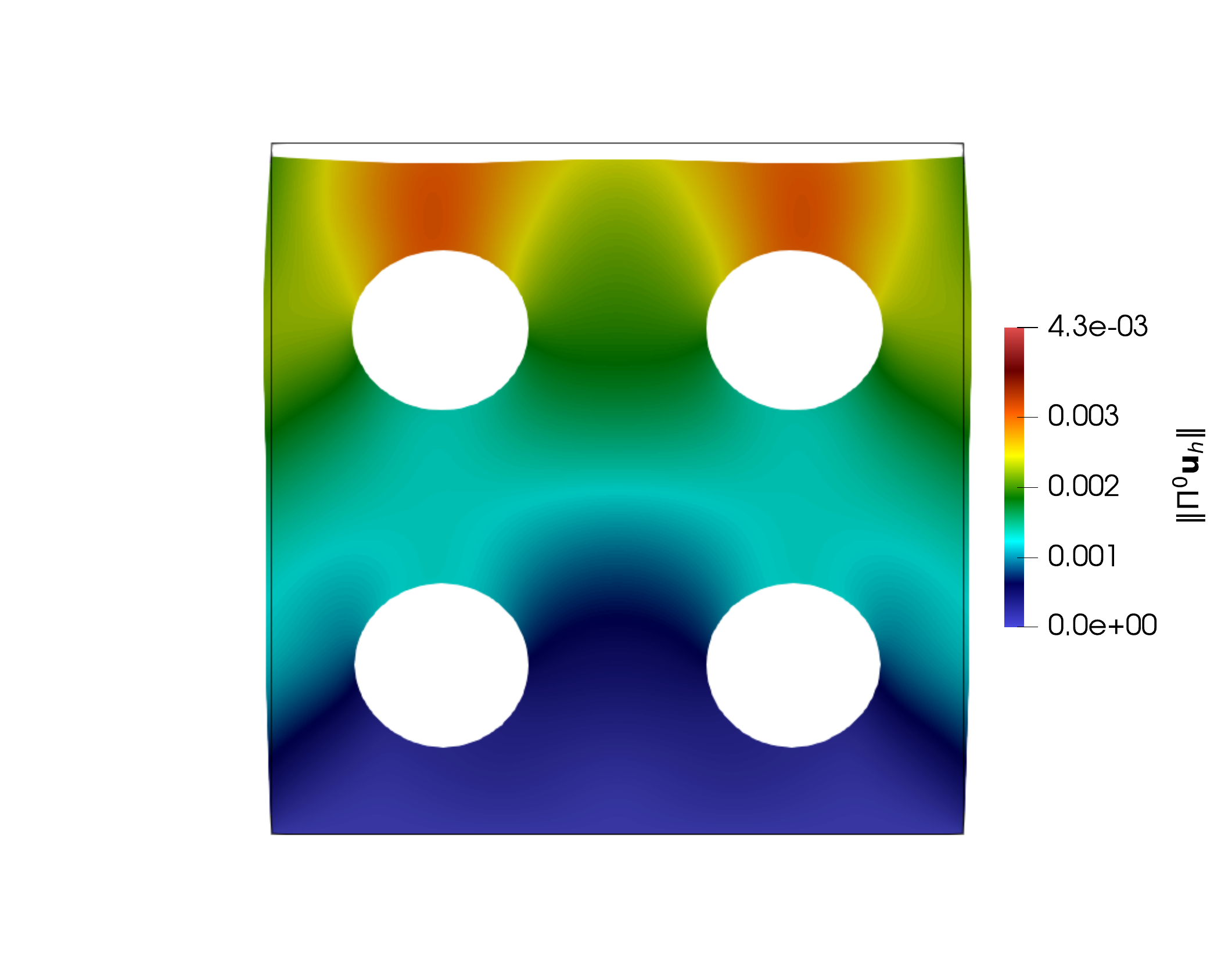}
        \end{minipage}%
    }\hfill
    \subfigure[Solution on $\mathbf{p}=(0.5,0.8)$]{%
        \begin{minipage}{0.2475\textwidth}
            \centering
            \includegraphics[width=\textwidth,trim={0.cm 0.cm 0.cm 0.cm},clip]{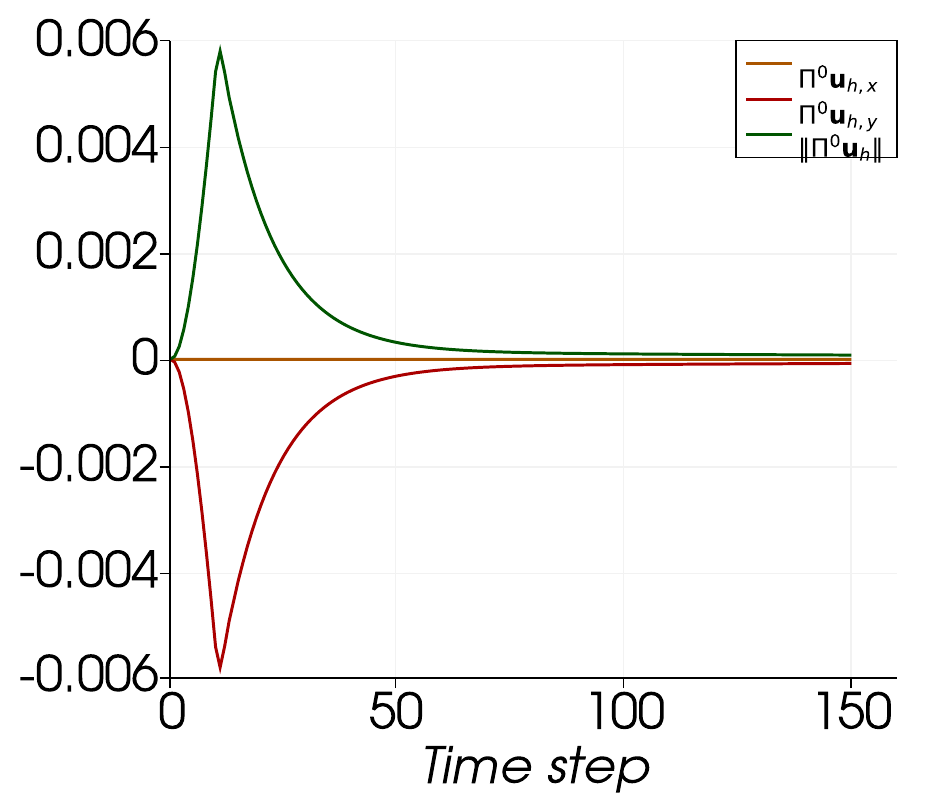}\\[1mm]
            \includegraphics[width=\textwidth,trim={0.cm 0.cm 0.cm 0.cm},clip]{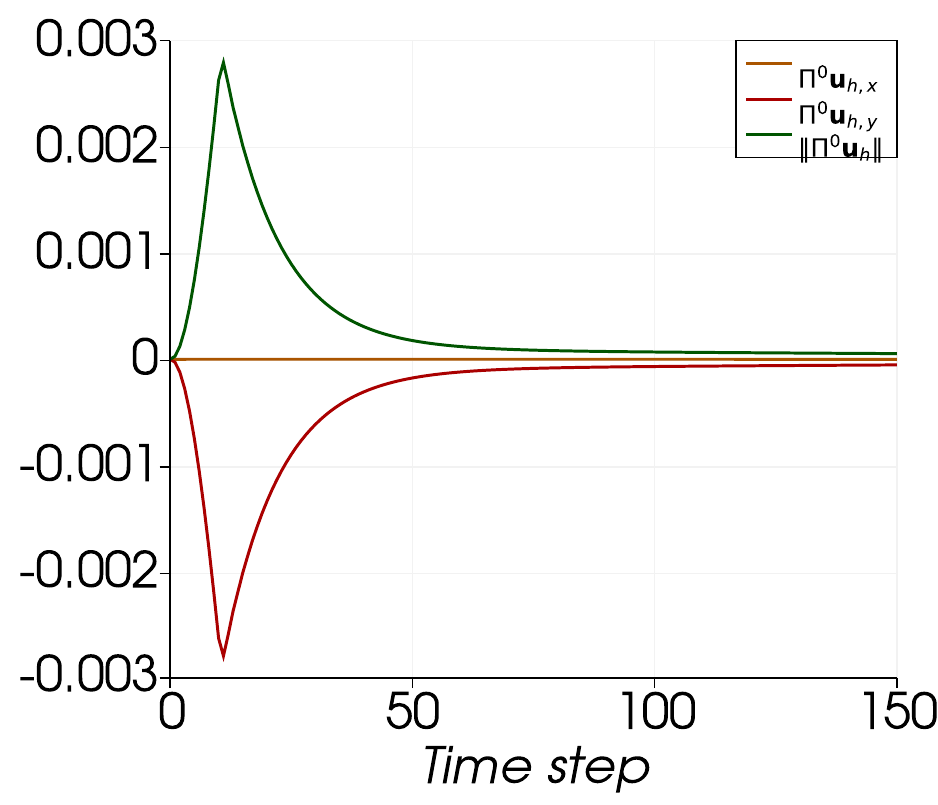}
        \end{minipage}%
    }
    \caption{Experiment 3. Snapshots of the polynomial projection of displacement $\Pi^0 \bu_h$ (magnified by $10$) taken at $t \in \{0.5\text{s}, 1.0\text{s}, 1.5\text{s}\}$ (a–c) for the single-perforation domain (top row) and four-perforation (bottom row), obtained with the linear-order method ($k=1$). Panel (d) shows the time history of the horizontal ($\Pi^0 \bu_{h,x}$), vertical ($\Pi^0 \bu_{h,y}$), and magnitude ($\|\Pi^0 \bu_h\|$) displacement components at point $\mathbf{p} = (0.5, 0.8)$.}
    \label{fig:snapshotsDissipation}
\end{figure}

Figure~\ref{fig:snapshotsDissipation} displays the numerical solutions computed with the fully discrete, linear-order Virtual Element scheme ($k=1$). Three snapshots of the polynomial projection of displacement $\Pi^0\bu_h$ (defined such that $\Pi^0\bu_h|_K = \Pi^{0,K}\bu_h$) are shown on the deformed configuration (magnified by $10\times$) at time instants $t \in \{0.5\text{s}, 1.0\text{s}, 1.5\text{s}\} $. The deformation gradient develops while the load is active ($t \le 1\text{s}$); once the load is removed, the domain gradually recovers  its undeformed state, demonstrating the characteristic energy dissipation and damping of Kelvin--Voigt materials. Moreover, we illustrate these properties by plotting the time history of the horizontal ($\Pi^0 \bu_{h,x}$), vertical ($\Pi^0 \bu_{h,y}$), and magnitude ($\|\Pi^0 \bu_h\|$) displacement components at the point $\mathbf{p} = (0.5, 0.8)$ in the right panel.

\subsection{Experiment 4: Sustained loading of viscoelastic slabs}

To evaluate the response of viscoelastic slabs under sustained constant loading, we consider the domain $\Omega = (0,1)^2 \times (0,2)$ and adopt the material parameters from \cite{rognes10}:
$$\rho = 10^{-3},\quad \mu_1 = 20 \text{Pa}, \quad \lambda_1=100 \text{Pa}, \quad \mu_2=80 \text{Pa s}, \quad \text{and}\quad  \lambda_2=100 \text{Pa s}.$$
The total duration of the creep test is $5\text{s}$. A constant traction along the $z$-direction is applied for the first $3\text{s}$, after which the slab is released. This loading condition is given by the vector field
$$\bf(x,y,z;t)=\begin{cases}
    (0,0,1)^{\tt t}, & z>0.9 \text{ and } t\leq3, \\
    (0,0,0)^{\tt t}, & \text{otherwise}.
\end{cases}$$

The computational domain is discretized starting from a $400$-element Voronoi mesh of the unit square, which is then extruded along the $z$-direction across $20$ layers using the native extrusion capabilities of \texttt{vem++}. This yields a polytopal mesh consisting of $8\text{,}000$ elements and $50\text{,}526$ degrees of freedom.

\begin{figure}[!h]
    \centering
    \subfigure[$t= 0.67 \text{s}$]{%
        \begin{minipage}{0.29\textwidth}
            \centering
            \includegraphics[width=\textwidth,trim={4.25cm 1.cm 8.75cm 5.cm},clip]{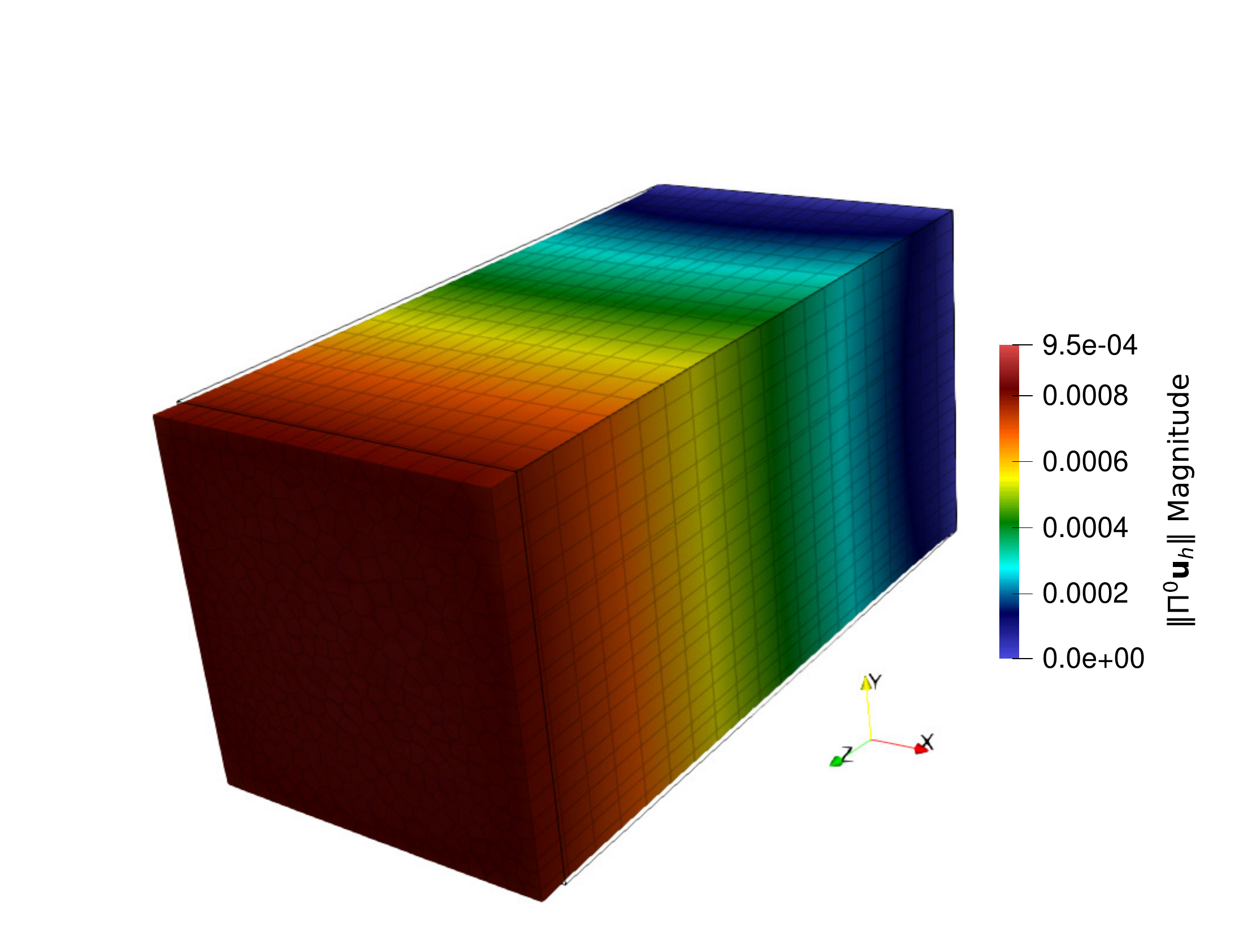}\\[0.5mm]
            \includegraphics[width=\textwidth,trim={4.25cm 1.cm 8.75cm 5.cm},clip]{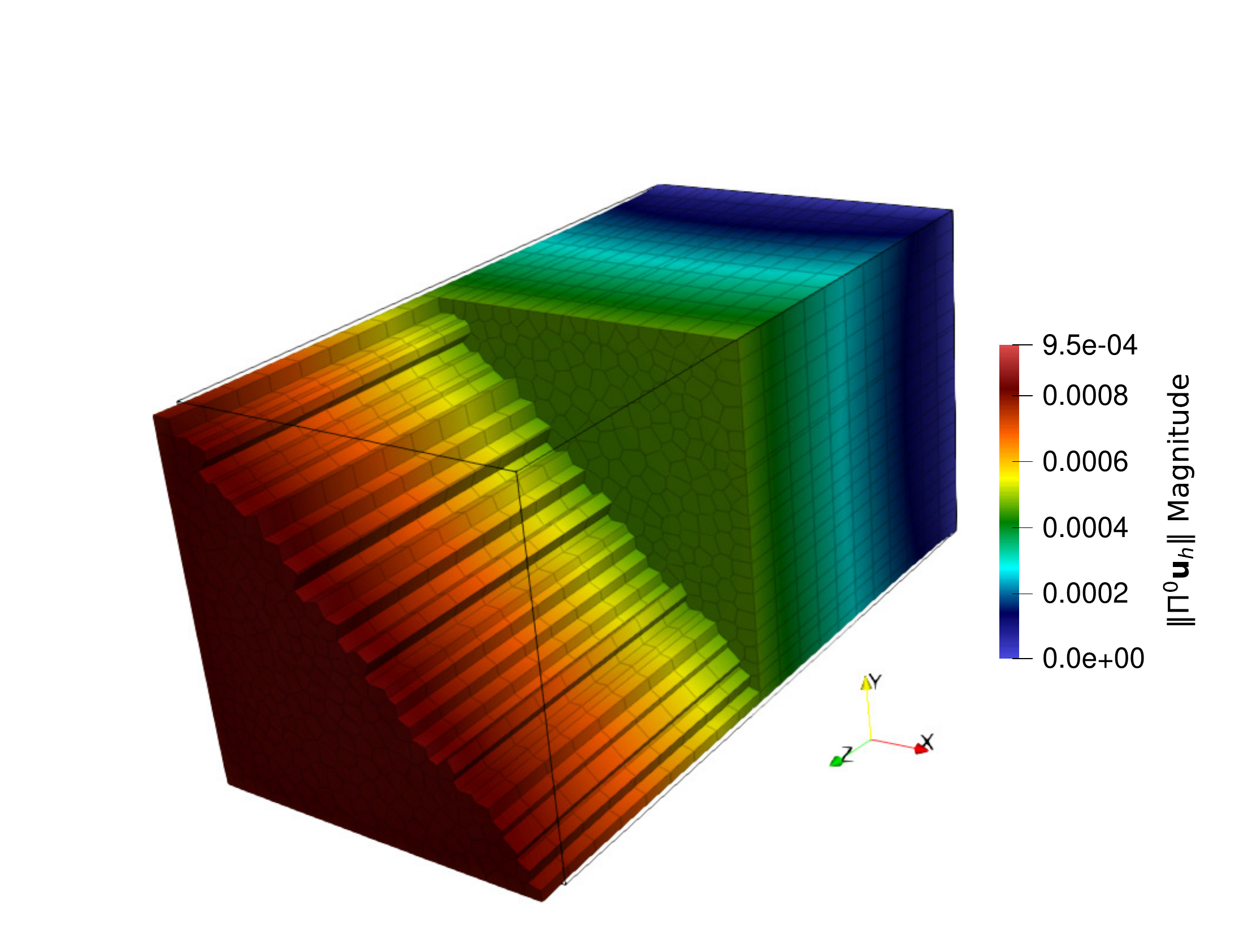}
        \end{minipage}%
    }\hfill
    \subfigure[$t= 2.5 \text{s}$]{%
        \begin{minipage}{0.29\textwidth}
            \centering
            \includegraphics[width=\textwidth,trim={4.25cm 1.cm 8.75cm 5.cm},clip]{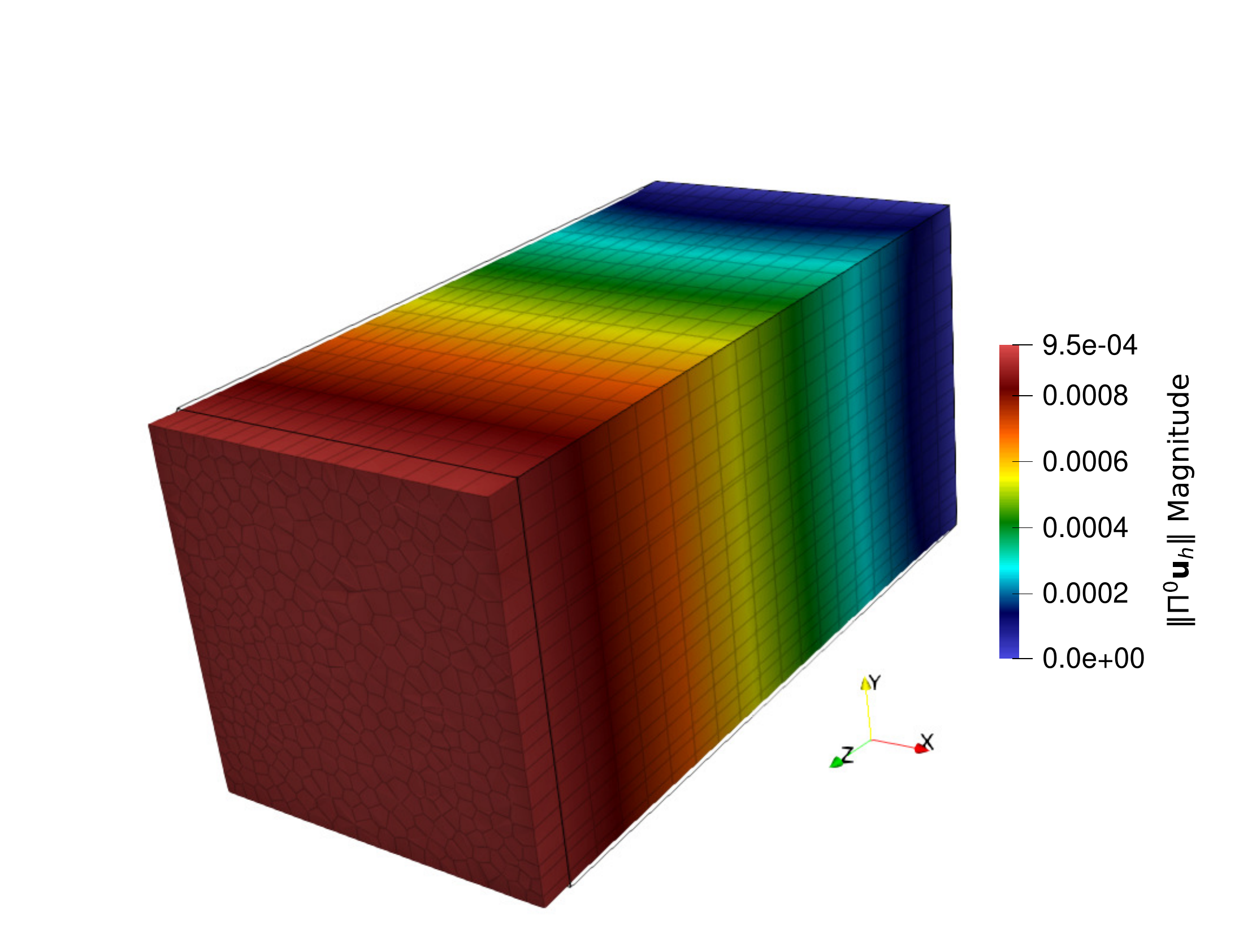}\\[0.5mm]
            \includegraphics[width=\textwidth,trim={4.25cm 1.cm 8.75cm 5.cm},clip]{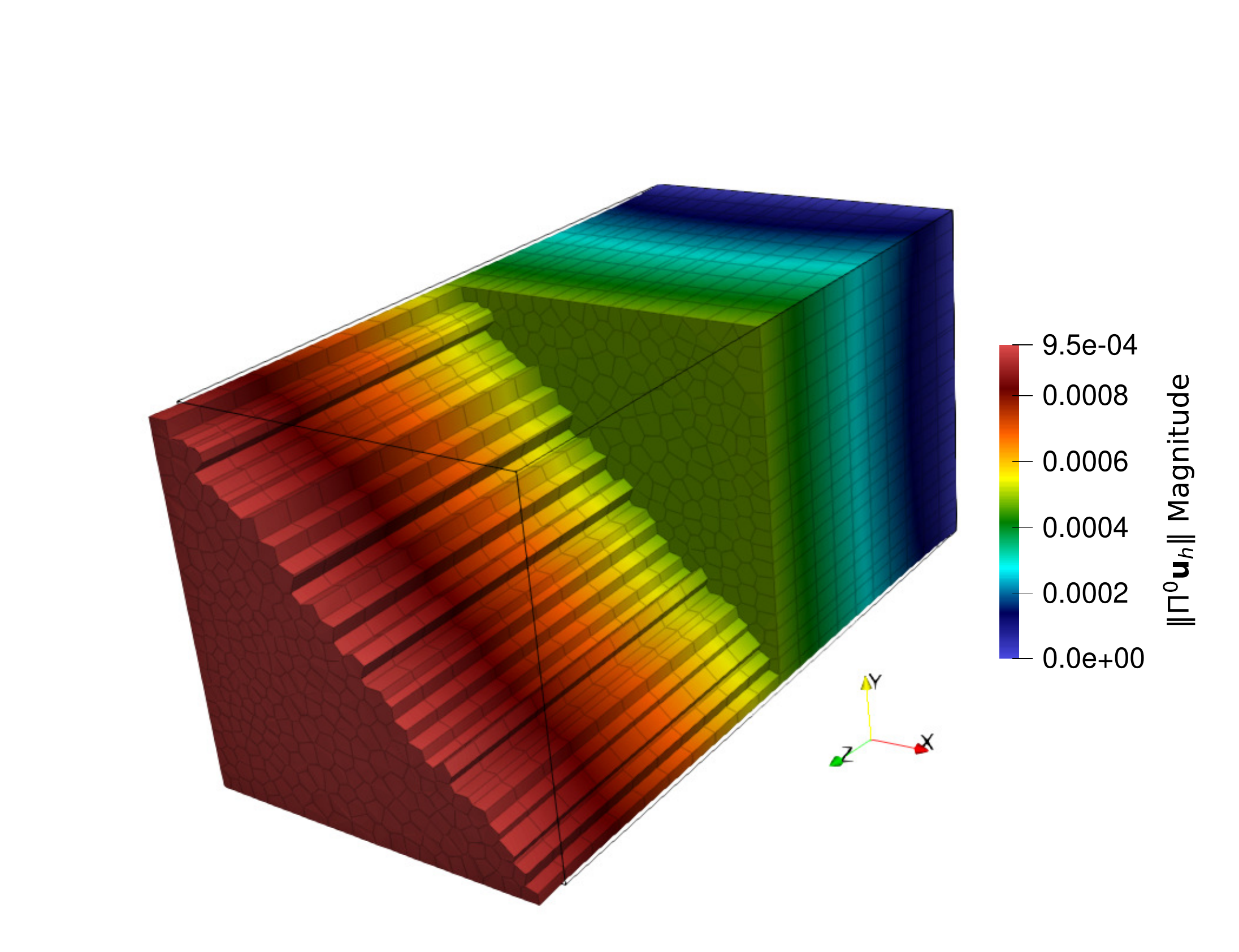}
        \end{minipage}%
    }\hfill
    \subfigure[$t= 3.3 \text{s}$]{%
        \begin{minipage}{0.3875\textwidth}
            \centering
            \includegraphics[width=\textwidth,trim={4.25cm 1.cm 1.25cm 5.cm},clip]{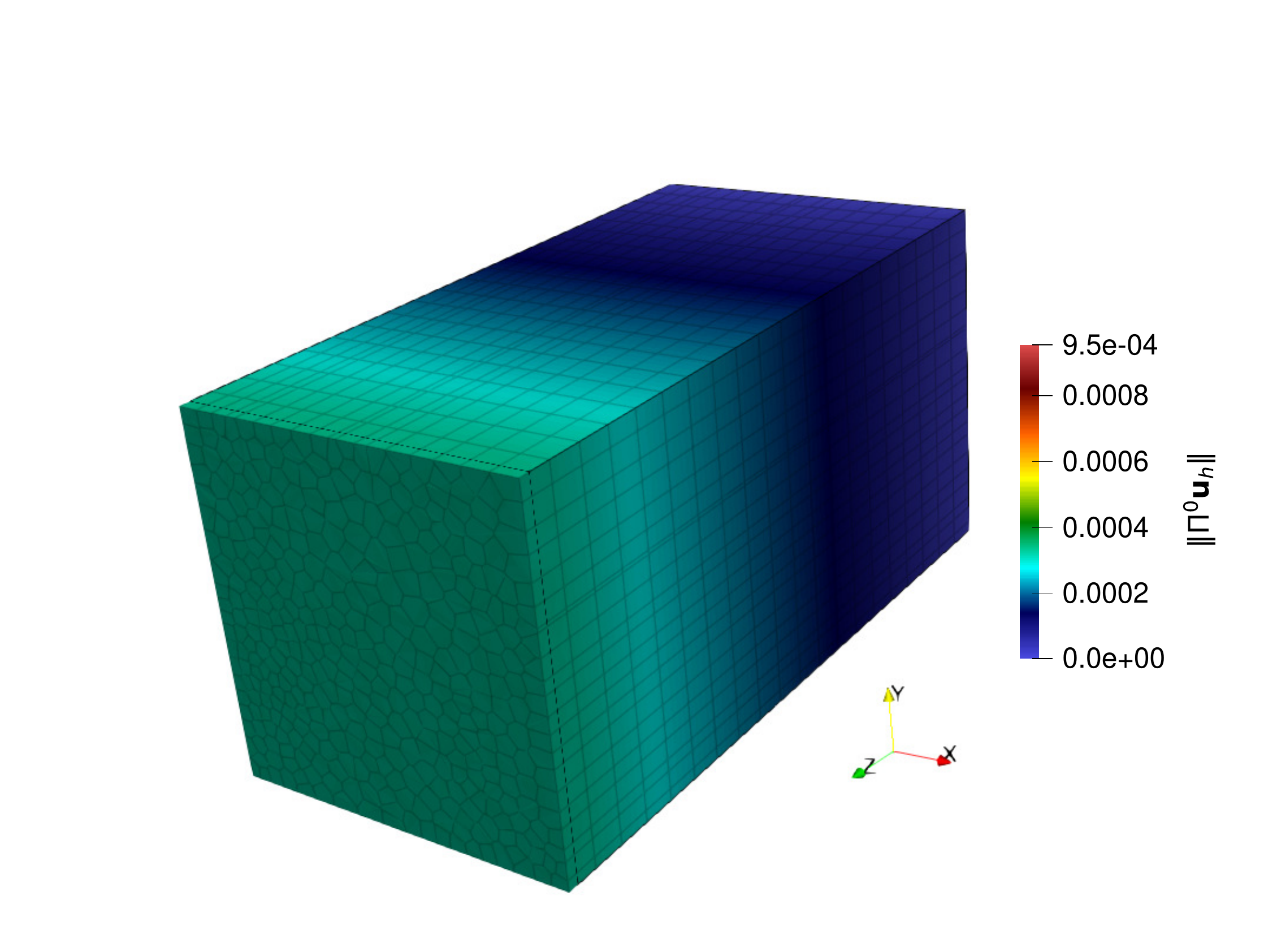}\\[0.5mm]
            \includegraphics[width=\textwidth,trim={4.25cm 1.cm 1.25cm 5.cm},clip]{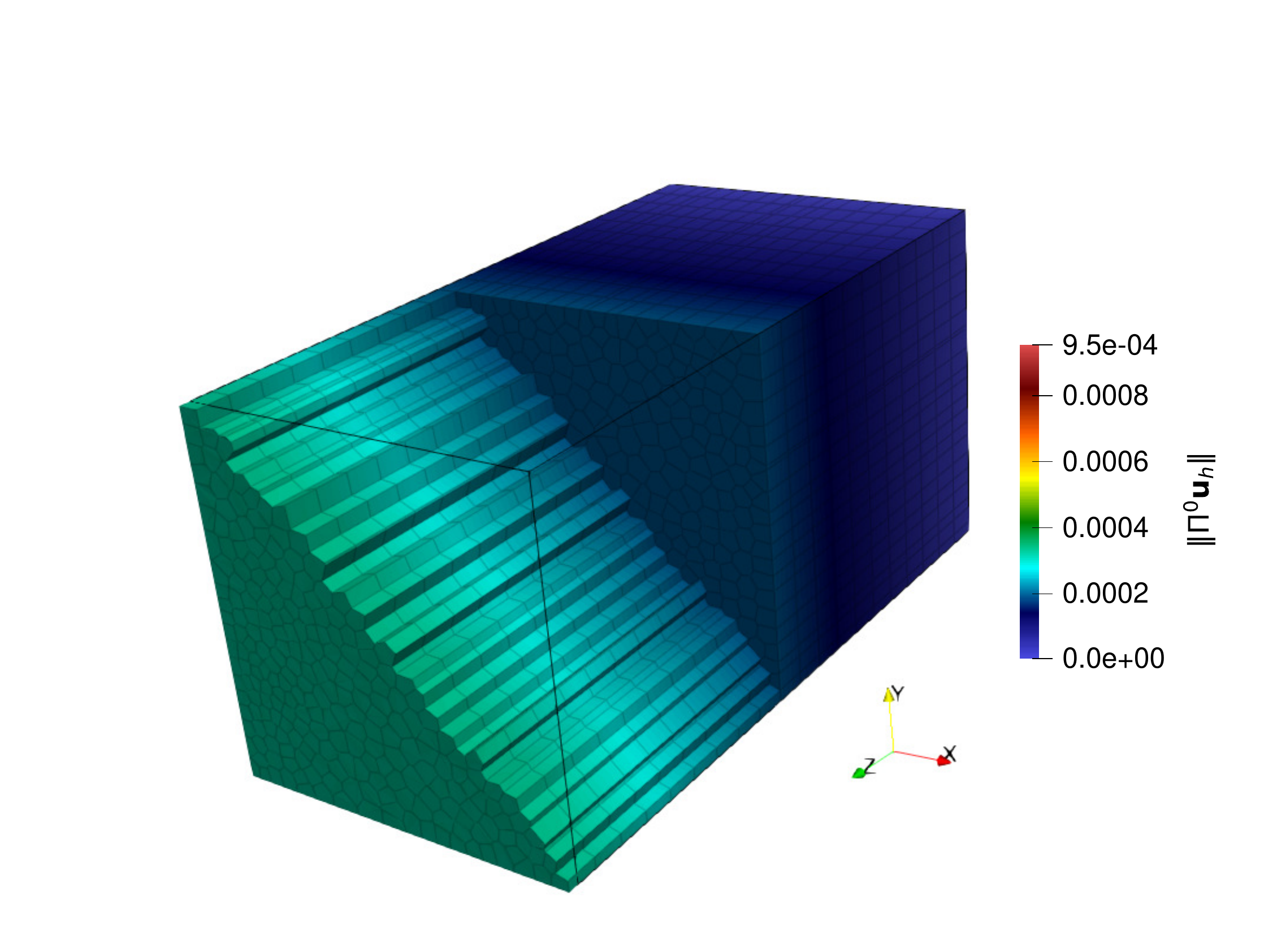}
        \end{minipage}%
    }\hfill
    \caption{Experiment 4. Snapshots of the polynomial projection of displacement $\Pi^0 \bu_h$ (magnified by $100$) taken at $t \in \{0.67\text{s}, 2.5\text{s}, 3.3\text{s}\}$ (a–c) for the viscoelastic slab domain (top row) and visualisation of the inner elements (bottom row), obtained with the linear-order method ($k=1$).}
    \label{fig:snapshotsViscoElasticSlab}
\end{figure}

Figure~\ref{fig:snapshotsViscoElasticSlab} presents snapshots of the sustained loading simulation at selected time instances ($t \in \{0.67\text{s}, 2.5\text{s}, 3.3\text{s}\}$). During the first three seconds, the viscoelastic slab undergoes creep deformation along the $z$-direction, subsequently relaxing back toward its reference configuration once the load is removed. This response closely matches the behavior reported in \cite{rognes10}. Furthermore, Figure~\ref{fig:avgViscoElasticSlab} plots the average spatial components ($x$, $y$, and $z$) and the norm of the projected displacement $\Pi^0\mathbf{u}_h$. As expected, the displacement grows toward the elastic limit under active loading and monotonically decays following release.

\begin{figure}[h!]
    \centering
    \includegraphics[width=0.4\linewidth]{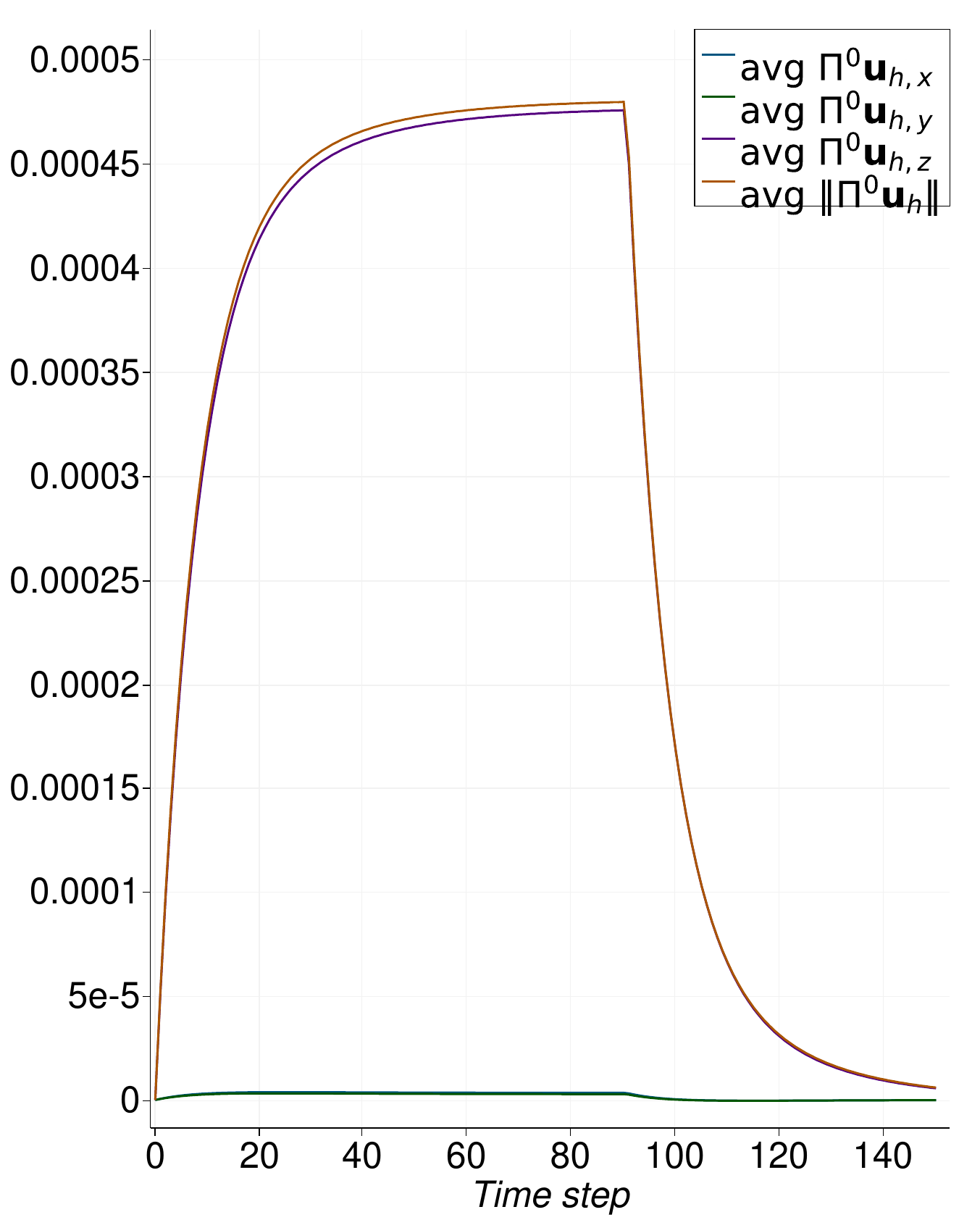}
    \caption{Experiment 4. Behavior of average (avg) value for the components $x, y, z$, and the norm of the polynomial projection of displacement $\Pi^0 \bu_h$ for the viscoelastic slab sustained loading test.}
    \label{fig:avgViscoElasticSlab}
\end{figure}

\subsection{Experiment 5. Linear Kelvin--Voigt modelling of the viscoelastic response in aneurysmal arterial tissue}
To evaluate the applicability and numerical stability of the proposed fully-discrete primal scheme for viscoelastic media, we consider a realistic biomechanical benchmark; the dynamic response of a human aortic aneurysm under physiological pulsatile loading. The objective of this test case is to demonstrate the method's capability to resolve complex 3D geometries (discretized here using $27,004$ an 8-node hexahedral cell with triangulated surface faces) and capture localized deformation fields. 
 
To model this behavior, the physical parameters of the arterial wall are chosen based on adapted physiological tissue data. The tissue density of the arterial wall is set to $\rho = 1.05 \times 10^{-9} \text{ tonne/mm}^3$. Based on experimental measurements by \cite{lt96}, the Young's modulus of human arterial walls under physiological pressures ($50\text{--}150 \text{mmHg}$) ranges across age groups, leading to our choice of $E = 0.2 \text{MPa}$ for the stiffened aneurysmal tissue. To model nearly incompressible tissue behavior, Poisson's ratio is set to $\nu = 0.45$. From these base parameters, the elastic shear modulus and Lamé's first parameter are calculated as $\mu_1 = E / [2(1+\nu)]$ and $\lambda_1 = E\nu / [(1+\nu)(1-2\nu)]$, respectively. Following Stokes' hypothesis for nearly incompressible media in small-strain regimes, the bulk viscosity is set to $\lambda_2 = 0 \text{MPa}\text{ s}$. The shear viscosity is defined as $\mu_2 = E b_1 = 0.02 \text{MPa}\text{ s}$, where $b_1 = 0.01 \text{s}$ denotes the characteristic relaxation time, in agreement with \cite{vj11}.

The boundary conditions are prescribed as follows: fixed constraints ($\bu = \bzero$) are applied at the arterial caps to simulate continuity with surrounding tissue, and a traction-free condition ($\bsigma\bn = \bzero$) is imposed on the outer adventitial surface. On the inner luminal surface, a compressive normal traction $\bsigma\bn = p(t)\bn$ is applied to model pulsatile blood pressure, where $p(t) = -0.01333 + 0.00267 \cos(2\pi t / T) \text{MPa}$ and $T = 0.8 \text{s}$ represents the cardiac cycle period. This loading function models simple harmonic pressure fluctuations relative to the mean arterial pressure over one full cardiac cycle.

\begin{figure}[!h]
    \centering
    \subfigure[$t= 0.2 \text{s}$]{%
        \begin{minipage}{0.425\textwidth}
            \centering
            \includegraphics[width=\textwidth,trim={9.cm 3.cm 12.cm 3.cm},clip]{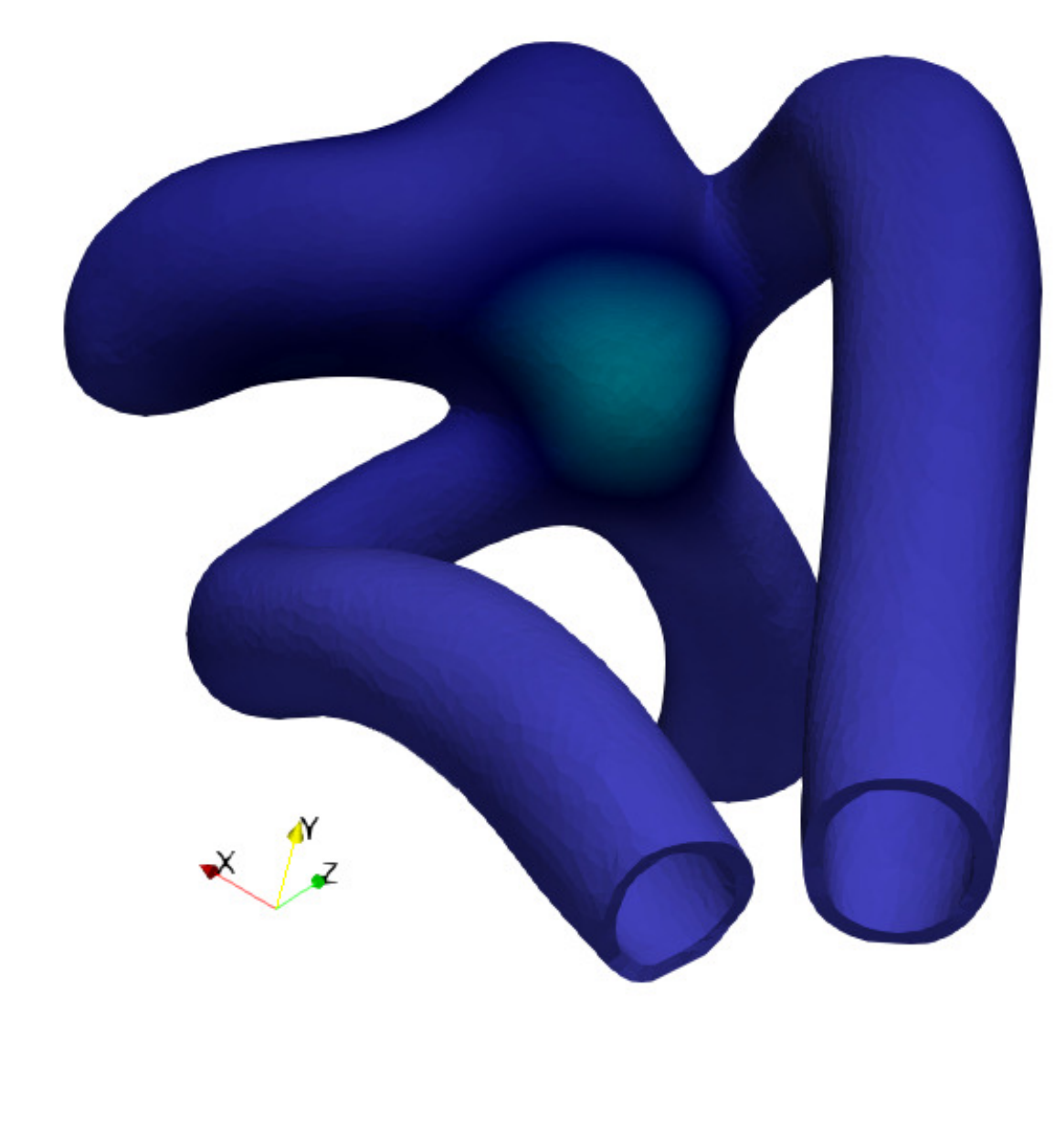}\\[0.5mm]
            \includegraphics[width=\textwidth,trim={9.cm 2.cm 11.cm 2.cm},clip]{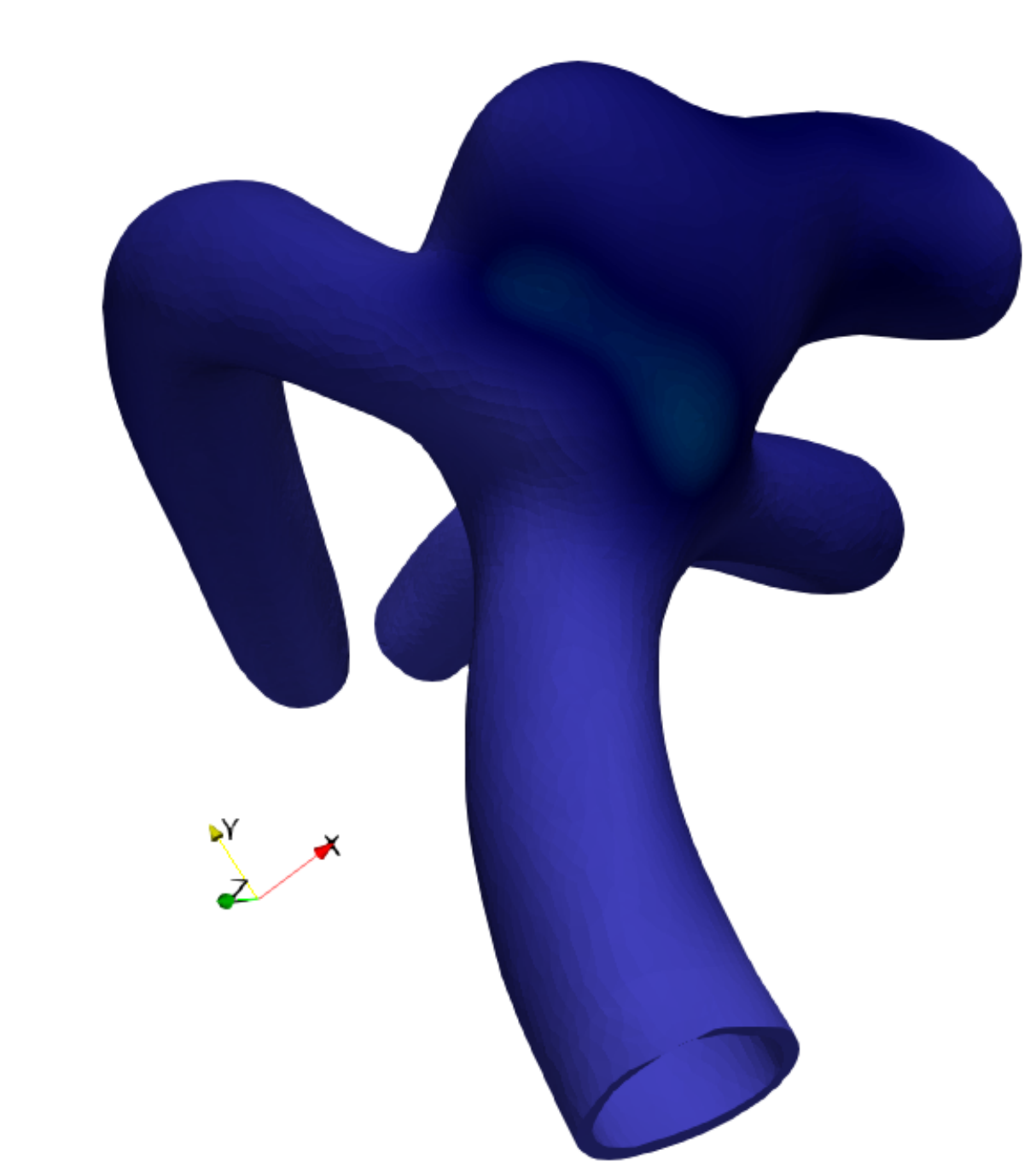}
        \end{minipage}%
    }\hfill
    \subfigure[$t= 0.4 \text{s}$]{%
        \begin{minipage}{0.5625\textwidth}
            \centering
            \includegraphics[width=\textwidth,trim={9.cm 3.cm 5.cm 3.cm},clip]{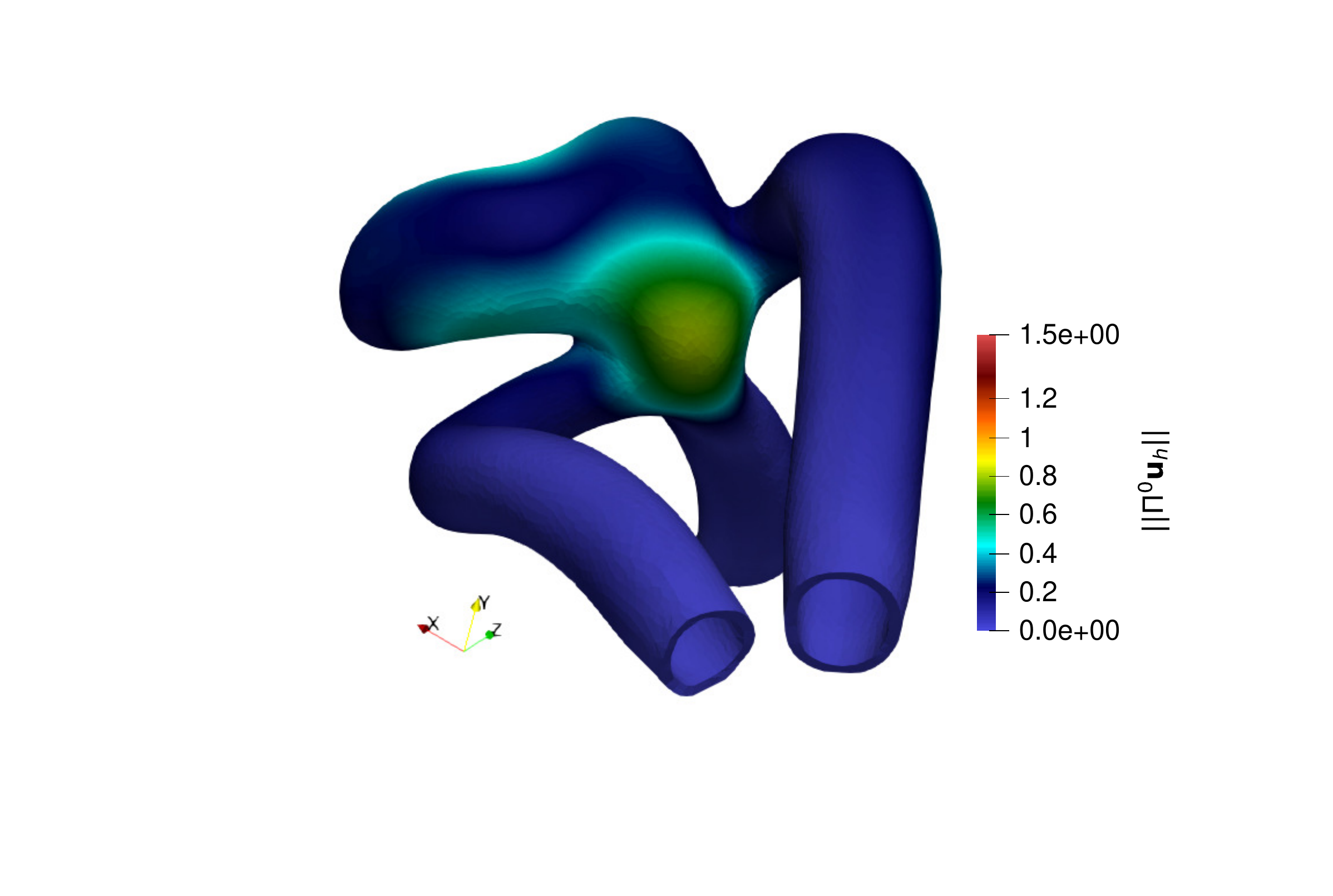}\\[0.5mm]
            \includegraphics[width=\textwidth,trim={9.cm 2.cm 5.cm 2.cm},clip]{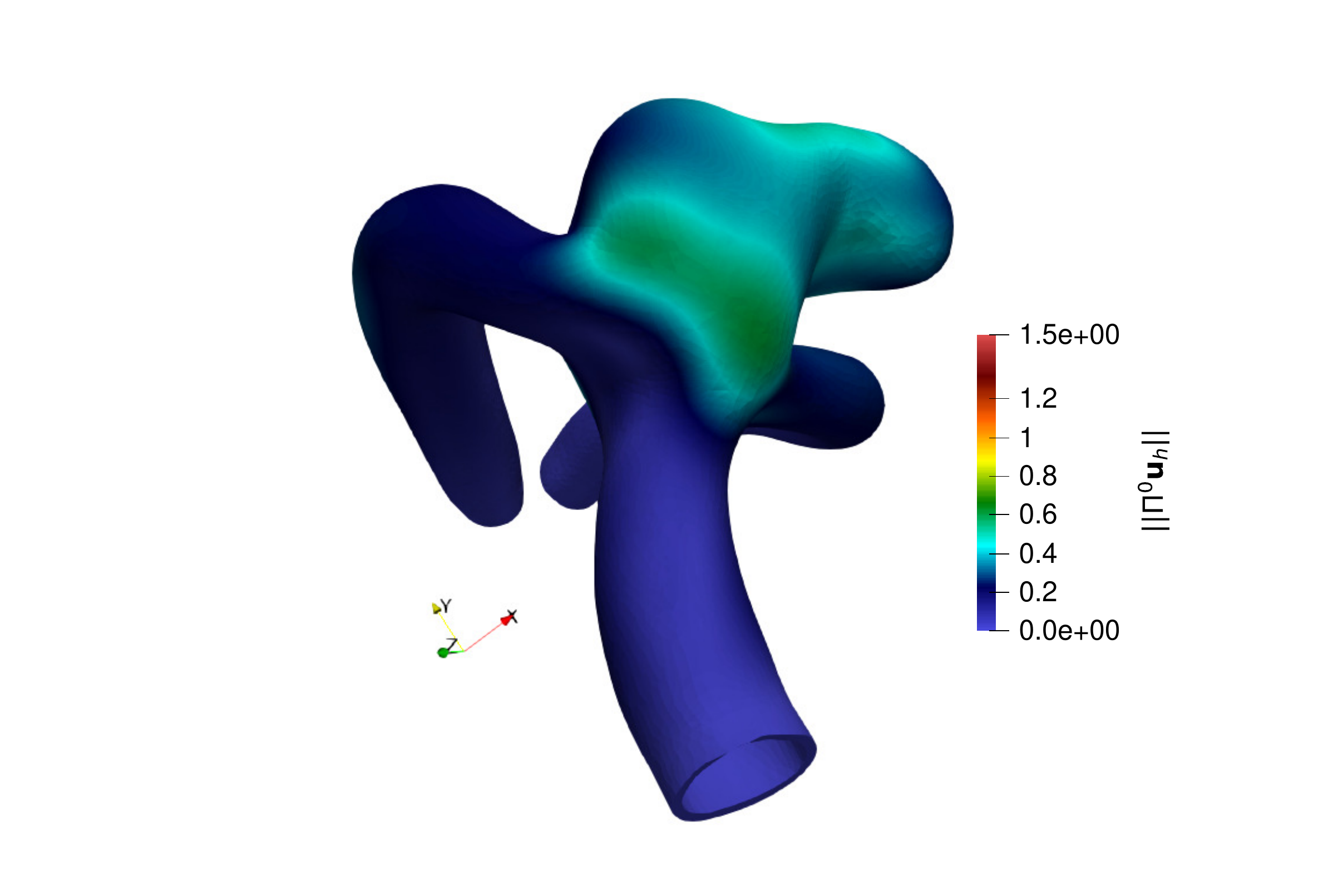}
        \end{minipage}%
    }\hfill
    \caption{Experiment 5. Snapshots of the polynomial projection of displacement $\Pi^0 \bu_h$ (magnified by $5$) taken at $t \in \{0.2\text{s}, 0.4\text{s}\}$ showing front (top row) and posterior (bottom row) views of the aneurysmal arterial tissue, obtained with the linear-order method ($k=1$).}
    \label{fig:snapshotsAneurysm1}
\end{figure}

Figures~\ref{fig:snapshotsAneurysm1} and~\ref{fig:snapshotsAneurysm2} display the simulation results obtained using the linear Kelvin--Voigt model with the lowest-order VEM discretization (cf. Section~\ref{sec:vem}). The color bar indicates the norm of the projected displacement $\Pi^0\mathbf{u}_h$ and the tissue is shown in its deformed configuration (magnified by $5$). Qualitative analysis shows that the highest displacement magnitudes are concentrated within the pathological tissue of the aneurysm, reaching a peak displacement magnitude of $1.5\text{ mm}$. As expected, localized tissue inflation corresponds directly to the cardiac cycle, as demonstrated by the selected time instances $t \in \{0.2\text{ s}, 0.4\text{ s}\}$ (Figure~\ref{fig:snapshotsAneurysm1}) and $t \in \{0.6\text{ s}, 0.8\text{ s}\}$ (Figure~\ref{fig:snapshotsAneurysm2}). These results confirm that the proposed fully-discrete model (see Section~\ref{sec:fully-discr}) stably captures the pulsatile inflation dynamics and maximum displacement concentrations within the aneurysmal region throughout the cardiac cycle. Moreover, the simulated maximum displacement magnitude of $1.5 \text{ mm}$ falls well within the wall displacement range ($1\text{--}3 \text{ mm}$) reported across the aneurysm region in previous studies \cite[Fig. 5]{gao21}.

\begin{figure}[!h]
    \centering
    \subfigure[$t= 0.6 \text{s}$]{%
        \begin{minipage}{0.425\textwidth}
            \centering
            \includegraphics[width=\textwidth,trim={9.cm 3.cm 12.cm 3.cm},clip]{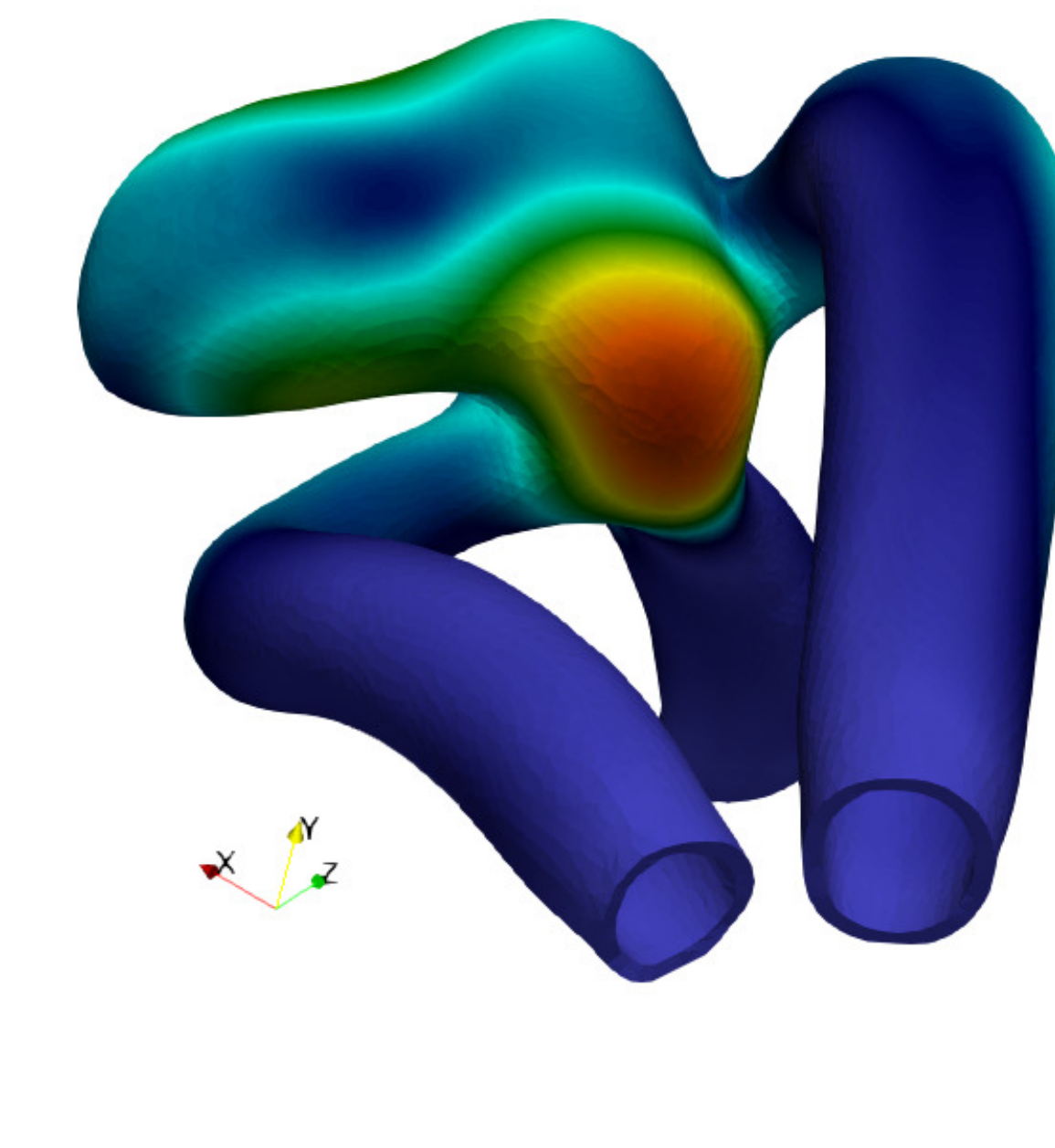}\\[0.5mm]
            \includegraphics[width=\textwidth,trim={9.cm 2.cm 11.cm 2.cm},clip]{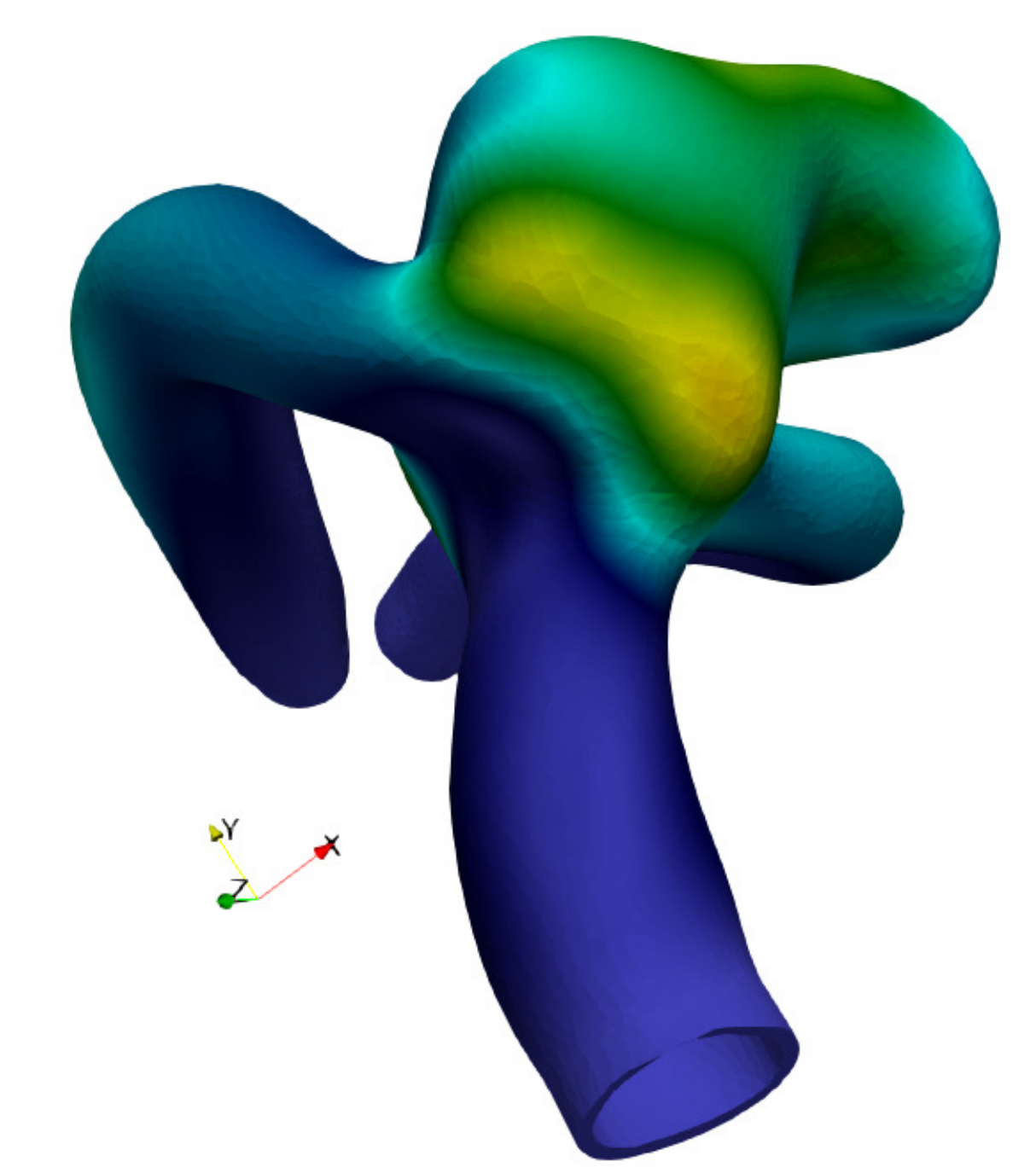}
        \end{minipage}%
    }\hfill
    \subfigure[$t= 0.8 \text{s}$]{%
        \begin{minipage}{0.5625\textwidth}
            \centering
            \includegraphics[width=\textwidth,trim={9.cm 3.cm 5.cm 3.cm},clip]{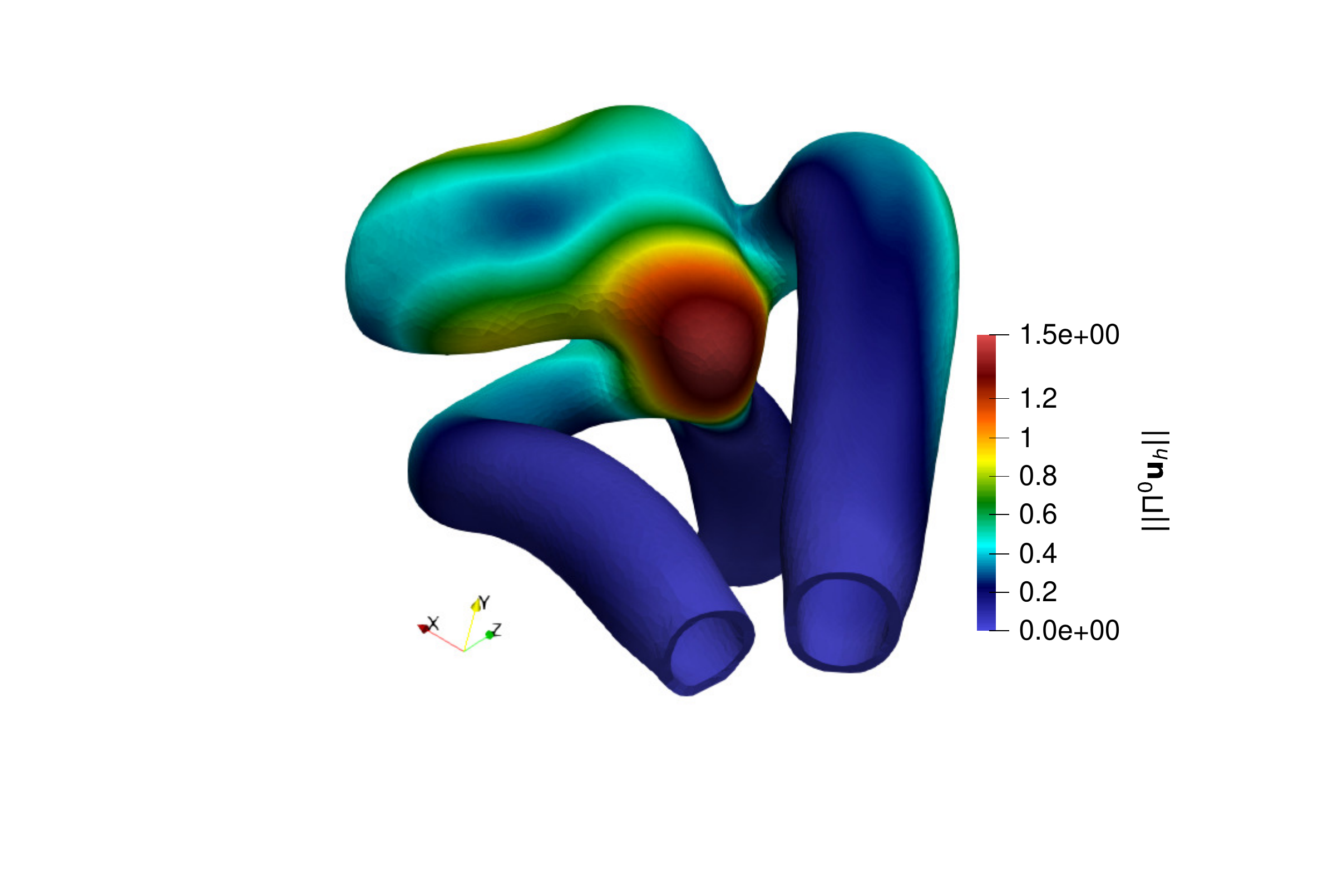}\\[0.5mm]
            \includegraphics[width=\textwidth,trim={9.cm 2.cm 5.cm 2.cm},clip]{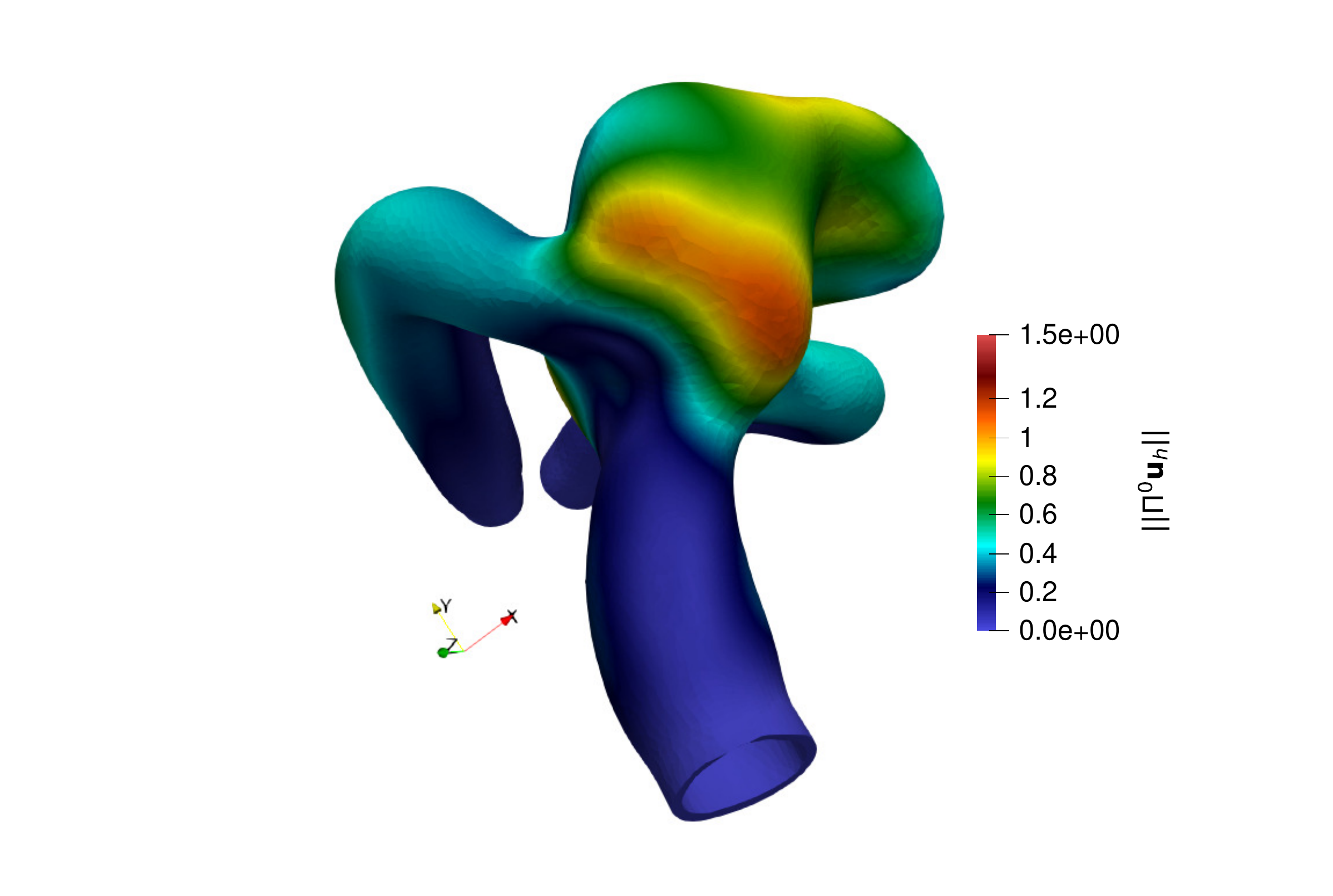}
        \end{minipage}%
    }\hfill
    \caption{Experiment 5. Snapshots of the polynomial projection of displacement $\Pi^0 \bu_h$ (magnified by $5$) taken at $t \in \{0.6\text{s},0.8\text{s}\}$ showing front (top row) and posterior (bottom row) views of the aneurysmal arterial tissue, obtained with the linear-order method ($k=1$).}
    \label{fig:snapshotsAneurysm2}
\end{figure}

\section*{Declarations} 

\noindent \textbf{Conflict of Interest.} The authors have no conflict of interest in this work. 

\small 
\bibliographystyle{siam}
\bibliography{fem}
\end{document}